\def\ex{\mbox{ex}}
\def\sat{\mbox{sat}}
\def\ES{\mbox{ES}}
\newtheorem{thm}{Theorem}[section]
\newtheorem{lem}[thm]{Lemma}
\newtheorem{prop}{Proposition}
\newtheorem{obs}[thm]{Observation}
\newtheorem{conj}[thm]{Conjecture}
\newtheorem{prob}[thm]{Problem}
\newtheorem{claim}{Claim}
\newtheorem{fact}{Fact}
\begin{document}

\title{On the saturation spectrum of the unions of disjoint cycles
}
\author{Yue Ma$^a$\\
\small $^{a}$School of Mathematics and Statistics,\\
\small Nanjing University of Science and Technology,\\
\small Nanjing, Jiangsu 210094, China.\\
\small $^a$yma@njust.edu.cn
}

\date{}

\maketitle

\begin{abstract}
Let $G$ be a graph and $\mathcal{H}$ be a family of graphs. We say $G$ is $\mathcal{H}$-saturated if $G$ does not contain a copy of $H$ with $H\in\mathcal{H}$, but the addition of any edge $e\notin E(G)$ creates at least one copy of some $H\in\mathcal{H}$ within $G+e$. The saturation number of $\mathcal{H}$ is the minimum size of an $\mathcal{H}$-saturated graph on $n$ vertices, and the saturation spectrum of $\mathcal{H}$ is the set of all possible sizes of an $\mathcal{H}$-saturated graph on $n$ vertices. Let $k\mathcal{C}_{\ge 3}$ be the family of the unions of $k$ vertex-disjoint cycles. In this note, we completely determine the saturation number and the saturation spectrum of $k\mathcal{C}_{\ge 3}$ for $k=2$ and give some results for $k\ge 3$.
\end{abstract}

\textbf{Keywords:}\quad saturation number, saturation spectrum, disjoint cycles, cycle

\section{Introduction}
All graphs considered in this paper are simple and finite. 
Let $G=(V,E)$ be a graph. We call $|V|$ the {\it order} of $G$ and $|E|$ the {\it size} of it. If $|V|=n$, we call $G$ an $n$-vertex graph. 
For positive integer $r$, let $K_r$ denote the complete graph on $r$ vertices; let $\overline{K_r}$ be the empty graph on $r$ vertices; let $P_r$ denote the path on $r$ vertices; when $r\ge 3$, let $C_r$ denote the cycle on $r$ vertices. For positive integer $r$ and $s$, let $K_{r,s}$ denote the bipartite complete graph whose two partition sets consist of $r$ vertices and $s$ vertices respectively.
For any $v\in V(G)$, we use $N_G(v)$ to denote the set of the neighbors of $v$ in $G$ and let $d_{G}(v)=|N_{G}(v)|$ be the {\it degree} of $v$ in $G$. Let $\delta(G)=\min_{v\in V(G)}d_{G}(v)$ be the minimum degree of $G$.
For any two graphs $G$ and $H$, let $G\cap H=(V(G)\cap V(H),E(G)\cap E(H))$ and $G\cup H=(V(G)\cup V(H),E(G)\cup E(H))$.
For a finite set $U$, let $K[U]$ be the complete graph on $U$. For any two sets $U_1$ and $U_2$, let $K[U_1,U_2]$ be a complete bipartite graph with two partition sets $U_1$ and $U_2$.
For a set $U\subseteq V(G)$, let $G[U]=G\cap K[U]$ be the subgraph induced by $U$. 
Write $G-U=G[V(G)\backslash U]$. 
For any two disjoint sets $U_1,U_2\subset V(G)$, let $G[U_1,U_2]=G\cap K[U_1,U_2]$.
For integers $a$ and $b$ with $a\le b$, let $[a,b]=\{a,a+1,a+2,\dots, b\}$. Write $\min A$ and $\max A$ for the minimum element and the maximum element of a bounded set $A$ of numbers, respectively.

Given a family $\mathcal{H}$ of graphs, a graph $G$ is said to be {\em $\mathcal{H}$-saturated} if $G$ does not contain a subgraph isomorphic to any member $H\in\mathcal{H}$, but  $G+e$ contains at least one copy of some $H\in\mathcal{H}$ for any edge $e\notin E(G)$.
The {\it Tur\'{a}n number} $\ex(n,\mathcal{H})$ of $\mathcal{H}$ is the maximum size of an $n$-vertex $\mathcal{H}$-saturated graph.
The {\it saturation number} $\sat(n,\mathcal{H})$ of $\mathcal{H}$ is the minimum size of an $n$-vertex $\mathcal{H}$-saturated graph. 
The problems to determine the Tur\'{a}n number and the saturation number of some given family $\mathcal{H}$ are known as the Tur\'{a}n problems and the saturation problems respectively. For saturation problems, one can refer to \cite{DS19} as a survey.
Another natural question is to determine all possible values $m$ between $\sat(n, \mathcal{H})$ and $\ex(n,\mathcal{H})$ such that there is an $n$-vertex $\mathcal{H}$-saturated graph with size $m$. We call the set of all possible sizes of an $\mathcal{H}$-saturated graph on $n$ vertices the {\it saturation spectrum} (or {\it edge spectrum}) of $\mathcal{H}$, denoted by $\ES(n,\mathcal{H})$. In other words,
$$\ES(n,\mathcal{H})=\{|E(G)|:G\mbox{ is an $n$-vertex }\mathcal{H}\mbox{-saturated graph}\}\mbox{.}$$
Clearly, $\ex(n,\mathcal{H})=\max \ES(n,\mathcal{H})$ and $\sat(n,\mathcal{H})=\min \ES(n,\mathcal{H})$. Thus, we have $\ES(n, \mathcal{H})\in[\sat(n, \mathcal{H}),\ex(n, \mathcal{H})]$.

The definition of saturation spectrums was firstly introduced by Barefoot et al.~\cite{K_3} in 1995. In that paper, they also determined $\ES(n,K_3)$. More generally, $\ES(n,K_r)$ for $r\ge 3$ was studied and given by Amin et al.~\cite{K_p} in 2013. Saturation spectrums of some other graphs are also determined, including $P_r$ ($3\le r\le 6$)~\cite{P}, $K_{1,r}$~\cite{star}, $K_4-e$~\cite{K_4-}, $C_5$~\cite{C_5} and so on. Recently, Lang et al.~\cite{LC} studied on the saturation spectrum of $\mathcal{C}_{\ge r}$, where $\mathcal{C}_{\ge r}$ denote the family of all cycles of size at least $r$. They determined $\ES(n, \mathcal{C}_{\ge r})$ for $r\in[3,6]$.

In this note, we focus on $k\mathcal{C}_{\ge 3}$ for $k\ge 2$, where $k\mathcal{C}_{\ge 3}$ is the family of the vertex-disjoint unions of $k$ members (may be identical) in $\mathcal{C}_{\ge 3}=\{C_3, C_4, \dots\}$. In 1962, Erd\"os and P\'osa~\cite{EP62} determined the Tur\'{a}n number for $k\mathcal{C}_{\ge 3}$ when $n\ge 24k$ and gave a conjecture for general $n$. This conjecture was later resolved by Justesen~\cite{J89}.
\begin{thm}[Justesen~\cite{J89}]
For integers $n$ and $k$ with $n\ge 3k\ge 6$, $\ex(n,k\mathcal{C}_{\ge 3})=\max\{\binom{3k-1}{2}+n-3k+1,(2k-1)n-2k^2+k\}$.
\end{thm}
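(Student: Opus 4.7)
The plan is to prove both directions separately: a lower bound via two explicit $k\mathcal{C}_{\ge 3}$-free constructions that realize the two quantities inside the maximum, and a matching upper bound by induction on $k$.

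For the lower bound, I would exhibit the following two $n$-vertex graphs. The first, $G_1$, takes $K_{3k-1}$ on a vertex set $A$, selects one vertex $v\in A$, and attaches $n - 3k + 1$ leaves to $v$; it has $\binom{3k-1}{2} + n - 3k + 1$ edges, and since the leaves have degree one every cycle lies inside $A$, so $k$ vertex-disjoint cycles would need at least $3k > |A|$ vertices. The second, $G_2 = K_{2k-1} \vee \overline{K_{n-2k+1}}$, completely joins a clique $B$ on $2k-1$ vertices to an independent set of size $n - 2k + 1$; a short count gives $\binom{2k-1}{2} + (2k-1)(n-2k+1) = (2k-1)n - 2k^2 + k$ edges, and since the independent side is edgeless every cycle uses at least two vertices of $B$, so $k$ disjoint cycles would require at least $2k > |B|$ clique vertices.

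For the upper bound, let $G$ be an $n$-vertex graph with $|E(G)|$ strictly exceeding the claimed maximum; I would show by induction on $k$ that $G$ contains $k$ vertex-disjoint cycles. The base case $k = 1$ is the elementary fact $\ex(n,\mathcal{C}_{\ge 3}) = n - 1$. For the inductive step, the key task is to find a single cycle $C$ whose removal leaves more than $\ex(n - |V(C)|, (k-1)\mathcal{C}_{\ge 3})$ edges, so that induction delivers the remaining $k-1$ disjoint cycles. Two complementary tactics are natural: if $G$ has large minimum degree, take a shortest cycle, whose length is small and whose deletion costs few edges; if $G$ contains a low-degree vertex $v$, delete $v$ and induct on $n$ instead. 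An averaging argument from the edge hypothesis forces one of these regimes.

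The main obstacle is the crossover between the two terms of the maximum: for $n$ near $3k$ the clique term $\binom{3k-1}{2}+n-3k+1$ dominates, while for large $n$ the join term $(2k-1)n-2k^2+k$ takes over. The induction must carry the correct threshold through this crossover, because after removing a cycle of length $\ell$ the active term of the maximum for parameters $(n-\ell,\,k-1)$ may switch from the one active at $(n,k)$, and the edge-loss accounting has to match up in each regime. A stability-type strengthening --- characterizing graphs attaining the extremal value as being essentially $G_1$ or $G_2$ --- is likely required so that near-extremal residual graphs can be handled, and the smallest cases (such as $n = 3k$ or $n = 3k+1$) may need separate direct verification to anchor the induction.
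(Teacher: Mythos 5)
This statement is not proved in the paper at all: it is quoted as a theorem of Justesen (resolving a conjecture of Erd\H{o}s and P\'osa, who had established it only for $n\ge 24k$), so there is no in-paper argument to compare against and your attempt must be judged on its own. Your lower bound is complete and correct: the graph obtained from $K_{3k-1}$ by attaching $n-3k+1$ pendant vertices, and the graph $K_{2k-1}\vee\overline{K_{n-2k+1}}$ (which is the graph $S_{n,2k-1}$ used throughout Section~2 of the paper), contain no $k$ disjoint cycles for exactly the reasons you give, and the two edge counts match the two terms of the maximum.

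The upper bound is where the entire content of the theorem lies, and your proposal does not close it. You correctly name the strategy (induct on $k$; delete either a shortest cycle or a low-degree vertex) and correctly name the obstacle (the crossover between the two terms of the maximum, which occurs at $n=(13k-4)/4$, uncomfortably close to the lower threshold $n=3k$), but you then only assert that ``an averaging argument forces one of these regimes'' and that ``a stability-type strengthening is likely required.'' Making those two sentences precise \emph{is} the theorem: this is essentially the induction Erd\H{o}s and P\'osa ran in 1962, and it is precisely in the regime where $n$ is a small multiple of $k$ that their edge-loss accounting fails --- deleting a shortest cycle of length $\ell$ can cost up to $\ell(n-1)-\binom{\ell}{2}$ edges, and no uniform estimate keeps the residual graph above $\ex(n-\ell,(k-1)\mathcal{C}_{\ge 3})$ on both sides of the crossover. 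That is why the general case remained open for over twenty-five years until Justesen's work. So the proposal is a reasonable plan whose hardest step is explicitly left as a gap, and it should be regarded as incomplete; for the purposes of this paper the correct move is simply to cite Justesen, as the authors do.
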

In particular, for $k=2$, we have:
\begin{thm}[Justesen~\cite{J89}, see also Erd\"os, P\'osa~\cite{EP62}]\label{ex}
For $n\ge 6$, $\ex(n,2\mathcal{C}_{\ge 3})=3n-6$.
\end{thm}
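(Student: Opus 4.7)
The plan is to prove the two inequalities $\ex(n, 2\mathcal{C}_{\ge 3}) \geq 3n-6$ and $\ex(n, 2\mathcal{C}_{\ge 3}) \leq 3n-6$ separately. For the lower bound, I would consider the $n$-vertex graph $G_0$ obtained from a triangle on a set $T=\{x,y,z\}$ together with an independent set $I$ of size $n-3$ disjoint from $T$, by joining each vertex of $T$ to each vertex of $I$. Then $|E(G_0)| = 3 + 3(n-3) = 3n-6$. To see $G_0$ contains no two vertex-disjoint cycles, observe that every vertex of $I$ has all its neighbors in $T$, so no two consecutive vertices of any cycle of $G_0$ can lie in $I$; hence any cycle $C$ of $G_0$ satisfies $|V(C) \cap T| \geq \lceil |V(C)|/2 \rceil \geq 2$. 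Two vertex-disjoint cycles would therefore require at least $4$ vertices of $T$, contradicting $|T|=3$.

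For the upper bound I would prove by induction on $n \geq 6$ that every $n$-vertex graph $G$ with $|E(G)| \geq 3n-5$ contains two vertex-disjoint cycles. The base case $n=6$ requires $|E(G)|\geq 13$, so $G$ is $K_6$ minus at most two edges; a short case check depending on whether the two missing edges share a vertex confirms that two vertex-disjoint triangles always exist. For the inductive step with $n \geq 7$, if $\delta(G) \leq 3$ I pick $v \in V(G)$ with $d_G(v)=\delta(G)$; then $G-v$ has $n-1 \geq 6$ vertices and $|E(G-v)| \geq 3n-8 = 3(n-1)-5$ edges, so by the induction hypothesis $G-v$ contains two vertex-disjoint cycles, and hence so does $G$.

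The hard case is $\delta(G) \geq 4$, where deleting any single vertex loses too many edges for direct induction. To handle it I would take a shortest cycle $C$ of $G$, of length $g$; because $G$ has no two vertex-disjoint cycles, $G-V(C)$ must be a forest with at most $n-g-1$ edges, and the chordlessness of a shortest cycle forces $|E(G[V(C)])|=g$. These constraints give at least $|E(G)| - g - (n-g-1) \geq 2n-4$ edges between $V(C)$ and $V(G)\setminus V(C)$. A pigeonhole argument then produces multiple outside vertices sharing several neighbors on $V(C)$, yielding a dense bipartite-like substructure from which two vertex-disjoint cycles can be extracted using the internal edges of $V(C)$ together with the forest structure of $G-V(C)$. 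The most delicate sub-case is $g=3$, where the $2n-4$ cross-edges concentrate on only three anchor vertices and the extraction requires careful case analysis on how the outside vertices attach to the triangle and to each other; this final extraction step is the main technical obstacle.
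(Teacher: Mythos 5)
First, note that the paper does not prove this statement at all: it is quoted as a known theorem of Erd\H{o}s--P\'osa and Justesen, so there is no in-paper argument to compare against. Judged on its own terms, your proposal has a correct and complete lower bound (your $G_0$ is exactly the graph $S_{n,3}=K_3\vee\overline{K_{n-3}}$ from the paper, and the argument that every cycle meets the triangle in at least two vertices is right), a correct base case $n=6$, and a correct easy inductive step when $\delta(G)\le 3$. This is essentially the classical Erd\H{o}s--P\'osa induction.

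The genuine gap is the case $\delta(G)\ge 4$, which you explicitly leave as ``the main technical obstacle.'' This is not a peripheral detail: it is the entire content of the theorem once the routine reductions are done, and your sketch stops exactly where the work starts. Your counting setup does get somewhere concrete --- since a shortest cycle $C$ of length $g$ is induced and $G-V(C)$ is a forest, one gets at least $|E(G)|-g-(n-g-1)\ge 2n-4$ cross-edges; the metric constraint that any two $C$-neighbours of an outside vertex lie at $C$-distance at least $g-2$ caps the cross-edges at $n-g$ when $g\ge 5$ and at $2(n-4)$ when $g=4$, both below $2n-4$, so in fact $g=3$ is forced, and then at least two outside vertices are joined to all three triangle vertices and the forest contains at least one edge. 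But extracting two disjoint cycles from this configuration (the extremal example $K_3\vee\overline{K_{n-3}}$ shows you are working with only one edge of slack) still requires a real case analysis on how the high-attachment vertices and the forest edge interact, and none of it is carried out. As written, the proposal does not establish the upper bound; to make it a proof you must either complete that extraction or replace it by citing Lov\'asz's characterization of graphs with no two vertex-disjoint cycles (all of which have a vertex of degree at most $3$ once $n\ge 6$), which would eliminate the $\delta(G)\ge 4$ case entirely.
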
 
In this note, we firstly give the saturation version of Theorem~\ref{ex}.
\begin{thm}\label{sat}
For $n\ge 6$,
 \begin{equation*}
     	\sat(n,2\mathcal{C}_{\ge3})=\begin{cases}
     		10   & n=6;\\
     		n+5  & n\ge7.
     	\end{cases}
     \end{equation*}
\end{thm}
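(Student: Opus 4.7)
I would exhibit $2\mathcal{C}_{\ge 3}$-saturated graphs of the claimed sizes. For $n=6$ take the wheel $W_5$: a hub $v$ joined to every vertex of a $5$-cycle $c_1c_2c_3c_4c_5c_1$. It has $10$ edges, and since any two vertex-disjoint cycles on $6$ vertices must be triangles partitioning the vertex set while every triangle of $W_5$ contains $v$ (the outer $C_5$ is triangle-free), $W_5$ has no two disjoint cycles. Saturation holds because each missing chord $c_ic_{i+2}$, once added, creates the triangle $c_ic_{i+1}c_{i+2}$ together with the disjoint triangle $vc_{i+3}c_{i+4}$. For $n\ge 7$ take $K_5$ on $\{v_1,\dots,v_5\}$ plus $n-5$ pendants each attached to some $v_i$; this has $10+(n-5)=n+5$ edges, all cycles lie inside the $K_5$ and hence no two are disjoint, and any non-edge closes a cycle through the pendant(s) on at most four vertices, leaving at least three vertices of the $K_5$ to span a disjoint triangle.

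\textbf{Lower bound, setup.} Let $G$ be any $2\mathcal{C}_{\ge 3}$-saturated graph on $n$ vertices with $m$ edges. An isolated vertex $v$ would have $d_{G+uv}(v)=1$ for every $u$, so the cycles of $G+uv$ would coincide with those of $G$; saturation would then force $G$ itself to contain two disjoint cycles, a contradiction. Hence $\delta(G)\ge 1$. The rest of the argument splits on whether some vertex of $G$ lies on every cycle.

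\textbf{Case A (hub vertex).} Suppose $v$ is on every cycle, so $F:=G-v$ is a forest. If $u\not\sim v$ then $(G+vu)-v=F$ is still a forest, so every cycle of $G+vu$ meets $v$ and no two can be disjoint; hence $v$ is adjacent to every other vertex. Similarly, if $F$ were disconnected a non-edge $uw$ between two components would keep $(G+uw)-v$ acyclic, again forcing all cycles of $G+uw$ through $v$; hence $F$ is a tree, giving $m=(n-2)+(n-1)=2n-3$, which already yields $m\ge n+5$ for $n\ge 8$. For $n\in\{6,7\}$ I rule this case out by a refinement: for each non-edge $uw$ of $F$, the new $v$-avoiding cycle of $G+uw$ is $V(P_{uw})\cup\{uw\}$, so the second disjoint cycle must sit inside $\{v\}\cup(V(F)\setminus V(P_{uw}))$; but that induced subgraph is a star centred at the universal vertex $v$ and contains a cycle only if $F-V(P_{uw})$ itself contains an edge. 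A case-by-case inspection of the three unlabelled trees on $5$ vertices and the six on $6$ vertices shows that in every such $F$ the longest path $P$ leaves $F-V(P)$ edgeless, so no tree works and Case A is impossible for $n\le 7$.

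\textbf{Case B (no hub).} If no vertex of $G$ meets every cycle, I invoke Lov\'asz's structure theorem for graphs without two vertex-disjoint cycles: such a $G$ belongs to a short list of exceptional configurations, and the only ones with $n\ge 6$ vertices either contain $K_5$ as a subgraph or coincide with a wheel $W_{n-1}$. In the $K_5$-subcase, the $K_5$ contributes $10$ edges and each of the remaining $n-5$ vertices contributes at least one further edge by $\delta(G)\ge 1$, giving $m\ge 10+(n-5)=n+5$. In the wheel subcase $m=2(n-1)$, which equals $10$ when $n=6$ and is $\ge n+5$ for all $n\ge 7$. Combining Case A and Case B delivers the claimed bounds. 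The main technical obstacle is establishing the structural dichotomy in this final case; if one prefers to avoid citing Lov\'asz directly, a self-contained route is to take a shortest cycle $C$ of $G$, observe that $G-V(C)$ must be a forest, and use saturation on carefully chosen non-edges between $V(C)$ and its outside to pin $|V(C)|$ down and force either a $K_5$ or a wheel inside $G$.
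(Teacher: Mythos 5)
Your upper bound is fine (your $K_5$-plus-pendants graph is a mild generalization of the paper's construction, and your $n=6$ wheel is the paper's $W_6$), and your Case A is essentially the paper's Lemma~\ref{x}: a vertex on every cycle forces $m=2n-3$, and the ``longest path'' refinement correctly kills $n\in\{6,7\}$. The genuine gap is Case B. The structural dichotomy you attribute to Lov\'asz --- that a hub-free $2\mathcal{C}_{\ge3}$-free graph on $n\ge 6$ vertices contains $K_5$ or is a wheel --- is false. Lov\'asz's classification (for minimum degree at least $3$) has a third family, $K_{3,p}$ plus edges inside the $3$-class; the paper's extremal graphs $S_{n,3}\cong K_3\vee\overline{K_{n-3}}$ lie in it and are saturated, hub-free, wheel-free and $K_5$-free, so your case analysis does not cover them. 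Worse, for general $G$ the classification only applies after suppressing degree-$1$ and degree-$2$ vertices (the paper's $M(G)$ reduction), and saturated graphs genuinely have such vertices: $W_6^{+p}$ is a saturated, hub-free graph that is neither a wheel nor a $K_5$-container. Since $|E(G)|-|V(G)|$ is invariant under this reduction, the lower bound hinges on ruling out \emph{small, sparse} cores: a core $K_4$ would give $m=n+2$, a $5$-vertex wheel $m=n+3$, $K_{3,3}$ gives $m=n+3$, $K_{3,3}$ plus one edge gives $m=n+4$, and a $6$-vertex wheel gives $m=n+4$. None of these is excluded by your argument; excluding them is precisely the content of the paper's Lemmas~\ref{n6}, \ref{wn}, \ref{delete} and the $\delta\ge3$ analysis via Lemma~\ref{pre}(iii), i.e.\ the bulk of the work.

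Two smaller points. In the $K_5$ subcase you bound $m\ge 10+(n-5)$ ``by $\delta(G)\ge1$,'' but an edge between two vertices outside the $K_5$ is counted once while serving two vertices, so minimum degree $1$ only gives $m\ge 10+\lceil (n-5)/2\rceil$; you need connectivity of $G$ (which does follow from saturation, as in Fact~\ref{f1}) to charge one new edge per vertex outside the core. And your fallback ``self-contained route'' (shortest cycle $C$, $G-V(C)$ a forest, degree counting across the cut) is indeed the right idea --- it is the paper's actual proof --- but as written it is a one-sentence sketch, not an argument.
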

In addition, we determine the saturation spectrum of $2\mathcal{C}_{\ge3}$.
\begin{thm}\label{es}
(i) $\ES(6,2\mathcal{C}_{\ge3})=[\sat(6,2\mathcal{C}_{\ge3}), \ex(6,2\mathcal{C}_{\ge3})]=[10,12]$.\\
(ii) For integers $n\ge 7$ and $m\in[n+5,3n-6]$, there is an $n$-vertex $2\mathcal{C}_{\ge3}$ of size $m$ if and only if $m\in[n+5,2n-2]$ or $m-n$ is an even number. In other words,
$$\ES(n, 2\mathcal{C}_{\ge3})=[n+5,2n-2]\cup\{n+2t: t\in[3,n-3]\}\mbox{.}$$
\end{thm}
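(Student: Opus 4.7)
Part (i) is immediate from Theorems~\ref{sat} and~\ref{ex}: they give $\ES(6, 2\mathcal{C}_{\ge 3}) \subseteq [10, 12]$, and the endpoints $10$ and $12$ are realised by the saturation-minimum construction (for example the wheel $W_5$, on six vertices with ten edges) and the Tur\'an graph $K_3 + \overline{K_3}$ respectively. It then suffices to exhibit a single $2\mathcal{C}_{\ge 3}$-saturated six-vertex graph of size $11$: a natural candidate is $K_5$ together with one pendant vertex attached to a clique vertex, whose four non-edges are each checked directly to create two vertex-disjoint cycles upon addition.

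For part (ii) and $n \ge 7$, the proof has two halves. On the \emph{sufficiency} side I plan two explicit families of constructions. Family $\mathcal{F}_1$ will cover the initial segment $[n+5, 2n-2]$: the base is the saturation-minimum graph ($K_5$ with $n - 5$ further vertices attached as pendants at a single clique vertex, giving $n+5$ edges), and each larger size in the segment is produced by a carefully designed local modification (for instance, redistributing attachments among the clique vertices or adjusting the attached tree) chosen so that the graph remains $2\mathcal{C}_{\ge 3}$-free while acquiring exactly one additional edge and preserving saturation. Family $\mathcal{F}_2$ will cover the same-parity tail $\{n+2t : t \in [3, n-3]\}$: the base is the Tur\'an-extremal graph $K_3 + \overline{K_{n-3}}$ of size $3n-6$, and the size is decreased in steps of two by local surgeries that modify the attachment pattern of two non-hub vertices so that no two vertex-disjoint cycles appear and every resulting non-edge is still certified. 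Together the two families realise every size in the claimed set.

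For the \emph{necessity} half, let $G$ be any $n$-vertex $2\mathcal{C}_{\ge 3}$-saturated graph with $m := |E(G)| \ge 2n - 1$; the goal is to show $m \equiv n \pmod{2}$. I plan to invoke the classical theorem of Lov\'asz on graphs without two vertex-disjoint cycles to obtain a feedback vertex set $S \subseteq V(G)$ with $|S| \le 3$ (the sporadic exceptional cases of that theorem are ruled out in our density regime). An edge count using the saturation property shows that $|S| \le 2$ forces $m \le 2n - 3$, contradicting the hypothesis; hence $|S| = 3$. Writing $S = \{a, b, c\}$ and $F = G - S$, a forest on $n - 3$ vertices with, say, $k$ components, the core of the argument will be a structural classification: each component of $F$ must be very small (typically a singleton or a single edge), with attachment to $S$ drawn from a short list of allowed patterns. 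Substituting this classification into $m = |E(G[S])| + |E(S, V(F))| + |E(F)|$ together with the forest identity $|E(F)| = (n - 3) - k$ will force $m - n$ to be even.

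The main obstacle, as I see it, is the structural classification in the last paragraph. Non-edges of $G$ come in three flavours---inside a single component of $F$, between two different components of $F$, and between $F$ and $S$---and each flavour imposes its own saturation constraint (adding any such non-edge must complete two vertex-disjoint cycles). Balancing these three kinds of constraint is what ultimately pins down the allowed component shapes and attachment patterns, and the borderline density $m = 2n-1$ is likely to require separate handling.
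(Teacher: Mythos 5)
Part (i) and the sufficiency half of part (ii) are essentially sound in outline (your $11$-edge example $K_5$ plus a pendant is exactly the paper's $K_1\vee(K_4\cup K_1)$, and the paper realises the two ranges via explicit block constructions, Theorem~\ref{cal}(ii) and (v), rather than unspecified ``local surgeries''---your sketch would still need those surgeries written down and their saturation verified). The genuine gap is in the necessity half. Your case split on the size of a feedback vertex set $S$ breaks at the first step: the claim that $|S|\le 2$ forces $m\le 2n-3$ is false. If $|S|=2$ the trivial count only gives $m\le 1+2(n-2)+(n-3)=3n-6$, and indeed the extremal graph $S_{n,3}=K_3\vee\overline{K_{n-3}}$---the very graph the theorem says $M(G)$ must be---has a feedback vertex set of size $2$ (any two of its three dominating vertices leave a star). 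So you cannot conclude $|S|=3$, and the whole subsequent analysis would have to absorb the $|S|=2$ case, where $S_{n,3}$ lives. A second problem is that the proposed classification ``each component of $G-S$ is a singleton or a single edge'' is false for saturated graphs in this edge range: the constructions in Section~2 attach pendant edges (and longer pendant trees) to an $S_{b,3}$ block, and these survive at $m\ge 2n-1$. This is why the paper first passes to the reduction $M(G)$ (stripping pendant trees and suppressing degree-$2$ vertices, Proposition~\ref{mg}), proves $\delta(M(G))\ge 3$ (Lemma~\ref{d3}), extracts a $K_4$-subdivision via Dirac's theorem, and only then classifies attachments to conclude $M(G)\cong S_{n_0,3}$; the parity then falls out of the invariant $m-n=m_0-n_0=2(n_0-3)$, not from a direct count on $G$ itself.

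Your instinct to use Lov\'asz's characterisation of graphs with no two disjoint cycles is reasonable and could in principle replace the Dirac/$K_4$-subdivision step: applied to $M(G)$ (after the degree reduction), it says $M(G)$ is $K_5$, a wheel, $K_{3,p}$ with edges added inside the $3$-class, or has a vertex meeting every cycle, and one would then eliminate all but the $K_{3,p}$-type by edge counts and saturation. But that is a different argument from the one you wrote, and as stated your proof does not go through.
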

We also consider $sat(n,k\mathcal{C}_{\ge3})$ and $\ES(n,k\mathcal{C}_{\ge3})$ for $k\ge 3$ and give the following results.
\begin{thm}\label{k}
(i) For $k\ge 3$ and $n\ge 4k-3$, $\sat (n,k\mathcal{C}_{\ge3})\le n+6k-7$.\\
(ii) For $k\ge 5$ and $n\ge 4k^2-21k+37$, 
$$[n+6k-7,(2k-5)n-6k^2+21k-3]\subset \ES(n,k\mathcal{C}_{\ge3})\mbox{.}$$
\end{thm}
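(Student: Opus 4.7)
For part~(i), I will generalise the $K_5$-plus-pendants construction that underlies $\sat(n,2\mathcal{C}_{\ge 3})=n+5$ in Theorem~\ref{sat}. Take $k-1$ copies of $K_5$, glued along a single common ``hub'' vertex $v^*$, to form a core $G^*$ on $4(k-1)+1=4k-3$ vertices with $10(k-1)$ edges. Attach each of the remaining $n-(4k-3)$ vertices as a pendant to some vertex of $G^*$. The resulting graph $G$ has $10(k-1)+(n-4k+3)=n+6k-7$ edges, and the hypothesis $n\ge 4k-3$ guarantees the construction makes sense.

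Verification of saturation splits into two steps. First, $v^*$ is a cut vertex whose removal leaves $k-1$ components each consisting of a $K_4$ together with attached pendants; hence every cycle of $G$ is contained in a single $K_5^{(i)}$, so at most $k-1$ pairwise vertex-disjoint cycles exist, and in particular $G$ contains no $k\mathcal{C}_{\ge 3}$. Second, for every non-edge $e$ of $G$, I will exhibit $k$ pairwise vertex-disjoint cycles in $G+e$ by a case analysis on whether $e$ joins two pendants, a pendant and a core vertex, or two core vertices. The template is always the same: take one triangle inside each $K_5^{(i)}-v^*$ that is not ``touched'' by $e$ (the corresponding $K_4$ has plenty of triangles) and produce one additional cycle through $e$; when $e=ab$ with $a\in K_5^{(i)}-v^*$, $b\in K_5^{(j)}-v^*$, $i\ne j$, the extra cycle is the triangle $\{a,v^*,b\}$, and the pendant cases are handled by short cycles through the corresponding attachment vertices.

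For part~(ii), I aim to realise every integer in $[n+6k-7,\,(2k-5)n-6k^2+21k-3]$ as the size of some $n$-vertex $k\mathcal{C}_{\ge 3}$-saturated graph. My plan is to build a two-parameter family of saturated graphs interpolating between the sparse construction of part~(i) and a denser saturated graph modelled on Justesen's extremal construction for $(k-2)\mathcal{C}_{\ge 3}$. Concretely, I attach at $v^*$ a Justesen-type join block of the form $K_{2r-1}+\overline{K_t}$ in place of $r$ of the $K_5$ blocks (for a suitable $r$), and then vary $t$ together with a secondary parameter that shifts the edge count by $\pm 1$ (for instance, adding one edge inside the independent part of the join block, or swapping a pendant for a short chord). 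Varying $t$ changes the size by $2r-1$ per unit, so combined with the secondary $\pm 1$ swap this should fill the interval in consecutive integers. One may also need an auxiliary family to cover the two endpoints.

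The main technical obstacle is verifying saturation uniformly across this family. Specifically, (a)~ensuring that no intermediate graph contains $k$ pairwise vertex-disjoint cycles requires controlling the cycle-packing number as the join block grows (this is where the lower bound $n\ge 4k^2-21k+37$ will be used), and (b)~checking that adding an arbitrary non-edge produces $k$ such cycles requires a case analysis handling the interaction between the $K_5$-cut-vertex blocks and the join block. The crossed non-edges (from an independent vertex of the join to a vertex of some $K_5^{(i)}$) are the delicate case, as they can cheaply create a cycle through $v^*$. To handle this I will keep invoking the ``reserve one triangle in each $K_5^{(i)}-v^*$'' template from part~(i), so that the join block need only supply the remaining cycles after the new edge is absorbed.
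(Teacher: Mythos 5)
Part (i) of your plan is essentially the paper's construction: the paper takes $k-1$ copies of $K_5$ identified at a single hub vertex together with $n-4k+3$ pendant edges (all attached at the hub, though attaching them elsewhere works too, as you do), and verifies saturation by exactly your ``reserve a triangle in each $K_5^{(i)}-v^*$ and route one new cycle through the added edge'' template. This part is fine.

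Part (ii) has a genuine gap, in two places. First, your interpolation mechanism cannot produce consecutive integers. At fixed $n$ and fixed cycle-packing count $k-1$, replacing $K_5$-blocks by a single join block $K_{2r-1}\vee\overline{K_t}$ and trading $t$ against the number of pendants changes the edge count in steps of $(2r-1)-1=2r-2\ge 2$; so for each admissible $r$ you only get an arithmetic progression of common difference $2r-2$, and the union of these progressions over $r$ does not cover an interval. The paper's fix is an extra block type you do not use: a wheel $W_a$ with a pendant path returning to the rim ($W_a^{+p}$), which contributes one cycle to the packing while its edge count grows at rate $2$ per vertex; sliding vertices between this block and a $K_{2j-1}\vee\overline{K_t}$ block changes the size by $\pm1$, and this is what fills the interval (the paper's Theorem~\ref{cal}(iii),(iv)). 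Second, your proposed $\pm1$ adjustment --- adding one edge inside the independent part of the join block --- destroys saturation: $K_{2r-1}\vee\overline{K_t}$ is exactly $r\mathcal{C}_{\ge3}$-saturated, so after adding such an edge that block already contains $r$ disjoint cycles, and together with the $k-r$ cycles reserved in the other blocks the graph contains $k$ disjoint cycles. Finally, your stated reason for the hypothesis $n\ge 4k^2-21k+37$ is off: controlling the cycle-packing number costs nothing (every cycle lives in a block), and the quadratic bound is needed only so that the two realized subintervals, $[n+6k-7,\,3n-3k-3]$ and $[2n+4k^2-24k+34,\,(2k-5)n-6k^2+21k-3]$, overlap and their union is the claimed interval.
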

The rest of the article is arranged as follows. We give some constructions for $k\mathcal{C}_{\ge3}$-saturated graphs in Section 2 and finished the proof of Theorem~\ref{k}. In Section 3 and Section 4, we give the proof of Theorem~\ref{sat} and Theorem~\ref{es} respectively. In Section 5, there are some remarks and conjectures.

\section{Some $k\mathcal{C}_{\ge3}$-saturated constructions}

We give some definitions here. 
Let $G_1,G_2,\dots G_t$ be $t\ge 2$ graphs with $v_i\in V(G_i)$ ($i\in[1,t]$). 
Let $U(G_1,\dots,G_t)$ be the vertex-disjoint union of $G_1,\dots,G_t$. 
Let $U(G_1,\dots, G_t;v_1,\dots,v_t)$ be the union of $G_1,\dots,G_t$ where we identify $v_1,\dots,v_t$ as the same vertex. Note that if $G_i$ is a copy of a $K_2$ or a $2$-connected graph for some $i\in[1,t]$, then $G_i$ is a block of  $U(G_1,\dots, G_t;v_1,\dots,v_t)$.
Let $G_1\vee G_2$ be the graph obtained by adding all the edges between $V(G_1)$ and $V(G_2)$ in $U(G_1,G_2)$.
For any connected graph $G$ and $u,v\in V(G)$, let $P_G(u,v)\subseteq G$ be the shortest path with endpoints $u$ and $v$. If there are several choices for $P_G(u,v)$, we simply choose one.
For an edge $e=\{u,v\}\in E(G)$ ,we write $e=uv$ for short. For a path $P$ with $E(P)=\{v_1v_2,v_2v_3,\dots, v_{t-1}v_t\}$, we write $P=v_1v_2\dots v_t$ for short. For a cycle $C$ with $E(C)=\{v_1v_2,v_2v_3,\dots,v_{t-1}v_t,v_tv_1\}$, we write $C=v_1v_2\dots v_tv_1$ for short. Note that $uv$ for some $u,v\in V(G)$ can be an edge in $G$ or a path with only one edge in $G$. It depends on how we use it.

For any graph $G$, let $X(G)=\{v\in V(G):d_{G}(v)=n-1\}$ be the set of vertices in $G$ of degree $n-1$.
Let $n$ be an integer. For $t\in[1,n-2]$, let $S_{n,t}$ be the graph on $n$ vertices with $|X(S_{n,t})|=t$ and $S_{n,t}-X(S_{n,t})$ being an empty graph, i.e. $S_{n,t}\cong K_{t}\vee\overline{K_{n-t}}$. For $n\ge 5$, let $W_n$ be the graph on $n$ vertices with $|X(W_n)|=1$ and $W_n-X(W_n)$ being a cycle of size $n-1$, i.e. $W_n\cong K_1\vee C_{n-1}$. For $n\ge 5$ and $p\ge 1$, let $W_n^{+p}= G\cup H$ be a graph on $n+p$ vertices, where $G\cong W_n$, $H\cong P_{p+2}$ and the two vertices in $V(G)\cap V(H)$ are the two endpoints of $H$, one of which is also the only vertex in $X(G)$. More specifically, $W_n^{+p}$ is a graph with $V(W_n^{+p})=[0,n+p-1]$ and
$$E(W_n^{+p})=\bigcup_{i=1}^{n-1}\{0i\}\cup\bigcup_{i=1}^{n-2}\{i(i+1)\}\cup \{(n-1)1, 0n, (n+p-1)1\}\cup\bigcup_{i=n}^{n+p-2}\{i(i+1)\}\mbox{.}$$
For convenience, we call $0$ the center vertex of $W_n^{+p}$. We also call the subgraph induced by $[1,n-1]$ (resp. $[n,n-p-1]$) the cycle part (resp. path part) of $W_n^{+p}$, and use $CP(W_n^{+p})$ (resp. $PP(W_n^{+p})$) to denote it.

\begin{figure}[h]
\centering
\subfigure[$S_{6,3}$]{\includegraphics[width=1.1in]{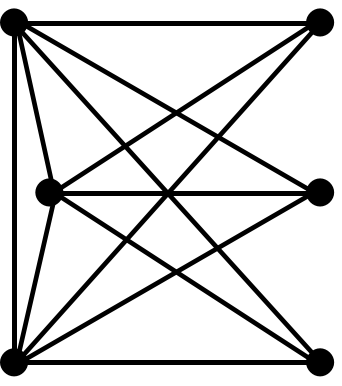}}
\hspace{0.5in}
\subfigure[$W_7$]{\includegraphics[width=1.2in]{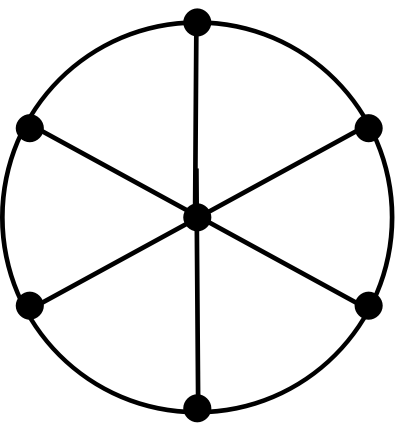}}
\hspace{0.5in}
\subfigure[$W_{6}^{+2}$]{\includegraphics[width=1.2in]{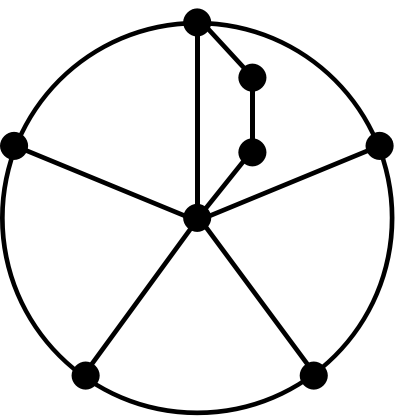}}
\caption{Some examples for the structures mentioned}\label{bl}
\end{figure}

One can simply check the following observations..

\begin{obs}\label{block}
(i) For $n\ge 3k\ge 3$, $S_{n,2k-1}$ is a $k\mathcal{C}_{\ge3}$-saturated graph. Moreover, $|E(S_{n,2k-1})|=(2k-1)n-2k^2+k$.\\
(ii) For $n\ge 6$ and $p\ge 1$, $W_n$ and $W_n^{+p}$ are $2\mathcal{C}_{\ge3}$-saturated graphs. Moreover, $|E(W_n)|=2n-2$ and $|E(W_{n}^{+p})|=2(n+p)-p-1$.\\
(iii) Let $G\cong K_{3k-1}$ with $k\ge 2$. Pick $u\in V(G)$. For any $v\in V(G)$, there is a path $P\subset G$ with the following properties: (a) the two endpoints of $P$ is $v$ and $u$; (b) $G-V(P)$ contains $k-1$ disjoint cycles.\\
(iv) Let $n\ge 3k$ and $G\cong S_{n,2k-1}$. Pick $u\in V(G)\backslash X(G)$. For any $v\in V(G)$, there is a path $P\subset G$ with the following properties: (a) the two endpoints of $P$ is $v$ and $u$; (b) $G-V(P)$ contains $k-1$ disjoint cycles.\\
(v) Let $n\ge 6$ and $G\cong W_n^{+p}$. Pick a vertex $u$ from the path part of $G$. For any $v\in V(G)$, there is a path $P\subset G$ with the following properties: (a) the two endpoints of $P$ is $v$ and $u$; (b) $G-V(P)$ contains a cycle.
\end{obs}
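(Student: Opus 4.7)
\medskip

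The plan is to verify each of the five parts in turn; all of them amount to a direct structural check, which is why the author phrases the observation as something one "can simply check". Throughout I will use the same labels as in the statement and exploit the common feature that each of $S_{n,2k-1}$, $W_n$ and $W_n^{+p}$ has a small set of "high-degree" vertices that every cycle must touch.

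For part (i), the edge count is $\binom{2k-1}{2}+(2k-1)(n-2k+1)=(2k-1)n-2k^2+k$ by direct summation. Since $V(S_{n,2k-1})\setminus X(S_{n,2k-1})$ is an independent set, every cycle of $S_{n,2k-1}$ uses at least two vertices of $X(S_{n,2k-1})$; hence $k$ vertex-disjoint cycles would require $2k$ such vertices, whereas only $2k-1$ are available. Any non-edge lies inside $V(S_{n,2k-1})\setminus X(S_{n,2k-1})$; after adding $xy$ I form a triangle $xyz$ with any $z\in X$, and I pair the remaining $2k-2$ vertices of $X$ with $k-1$ distinct vertices of $V\setminus X$ (possible since $n-2k+1-2\ge k-1$ is exactly $n\ge 3k$) to produce $k-1$ further disjoint triangles. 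For part (ii), the edge counts are immediate from the construction. Neither $W_n$ nor $W_n^{+p}$ contains two disjoint cycles: every cycle either equals $C_{n-1}$ or contains the hub $0$, and in both graphs the removal of $V(C_{n-1})$ leaves a tree (a single vertex in $W_n$, a path in $W_n^{+p}$). For saturation of $W_n$ a non-edge is a chord $v_iv_j$ of $C_{n-1}$; I form one cycle from the chord plus the shorter arc and a second from the hub $0$ together with the internal vertices of the longer arc, where $n\ge 6$ guarantees the longer arc has at least two internal vertices. For $W_n^{+p}$ the non-edges split into a handful of types (chord of $C_{n-1}$; hub-to-path; vertex $1$ to path; cycle-part-to-path; intra-path), and in each case I exhibit two disjoint cycles: typically one contained in $CP(W_n^{+p})\cup\{0\}$ and one built from the new edge plus a segment of $PP(W_n^{+p})$.

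For parts (iii)--(v) the strategy is uniform: construct $P$ to reach $u$ while using as few vertices as possible from a designated "cycle reservoir". In (iii), taking $P=vu$ (or the trivial path if $v=u$) leaves a clique on at least $3k-3$ vertices, which trivially contains $k-1$ disjoint triangles. In (iv), for $u\in V\setminus X$ the path is either trivial, the edge $vu$ when $v\in X$, or $v\,x\,u$ for some $x\in X$ when $v\in V\setminus X\setminus\{u\}$; after removing $V(P)$ one still has at least $2k-2$ vertices of $X$ and at least $k-1$ vertices of $V\setminus X$ (using $n\ge 3k$), which is enough for $k-1$ disjoint triangles. In (v), with $u\in PP(W_n^{+p})$, I route $P$ from $v$ to $u$ mainly through the path part, touching the cycle part as little as possible: if $v=0$ I take the prefix of the path $0,n,n+1,\dots,u$, leaving $C_{n-1}$ intact; if $v=1$ I take the suffix $1,n+p-1,\dots,u$, leaving the hub with $C_{n-1}-\{1\}$, which still contains the triangle $0\,2\,3\,0$; and if $v$ lies on the cycle part or is a path-intermediate vertex, a short adjustment on one side of the cycle or along the path part yields a path $P$ whose removal leaves a long arc of $C_{n-1}$ together with $0$, hence a cycle of length $\ge 3$ whenever $n\ge 6$.

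The most fiddly point is part (v), where the case split on the position of $v$ relative to the hub, the cycle part and the path part has to be carried out without ever exhausting the cycle reservoir; $n\ge 6$ is what keeps the surviving arc of $C_{n-1}$ long enough to close up a triangle with the hub. The other parts reduce to counting edges and invoking the pigeonhole-style observation that a cycle needs two hub-like vertices in $S_{n,2k-1}$ or a hub vertex in $W_n$, $W_n^{+p}$, so I expect them to be essentially immediate.
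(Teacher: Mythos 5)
Your proposal is correct and follows essentially the same route as the paper: for (iii)--(v) you choose the same minimal paths (the edge $uv$ in the clique, a path of length at most $2$ through $X(G)$ in $S_{n,2k-1}$, and a route through the path part and the shorter arc of the cycle part in $W_n^{+p}$) and verify the same leftover structures. For (i) and (ii) the paper simply declares them checkable, so your counting argument (every cycle of $S_{n,2k-1}$ needs two vertices of $X$, every cycle of $W_n^{+p}$ other than $C_{n-1}$ contains the hub) is a welcome, and correct, filling-in of that omission.
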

\begin{proof}
Since (i) and (ii) can be checked by simple discussions, we only prove (iii), (iv) and (v). Also, for (iii), we can simply pick the single edge $uv$ as $P$ (or the single vertex $u$ if $u=v$) and complete the proof.\\
For (iv), we pick $P=P_{G}(u,v)$. If $v=u$, then $P\cong K_1$ and $G-V(P)\cong S_{n-1,2k-1}$; if $v\in X(G)$, then $P\cong K_2$ and $G-V(P)=S_{n-2,2k-2}$; else $v\in V(G)\backslash(X(G)\cup\{u\})$ and $P\cong P_3$, then $G-V(P)=S_{n-3,2k-2}$. In all these cases, we can find $k-1$ vertex-disjoint cycles in $G-V(P)$.\\
For (v), if $v$ is the center vertex $c$ or a vertex in the path part $PP(G)$, we pick $P=P_{G[\{c\}\cup V(PP(G))]}(u,v)$. In this case, $CP(G)\subseteq G-V(P)$, which is a cycle in $G-V(P)$. Otherwise, $v$ is a vertex in $CP(G)$, we pick $P=P_{CP(G)}(u,v)$. Since $n\ge 6$, $CP(G)-V(P)$ is a path with at least $\lceil\frac{6-1}{2}\rceil-1=2$ vertices, which means we can pick an edge $e\in E(CP(G)-V(P))$. This edge, together with the center vertex of $G$, induce a cycle $C_3$ in $G-V(P)$.
\end{proof}

Let $\mathcal{G}$ be the family of all connected graphs whose blocks are copies from $\{K_{3i-1}:i\ge 1\}\cup\{W_a^{+p}:a\ge 6\mbox{, }p\ge 1\}\cup\{S_{b,2j-1}:b\ge 3j\ge 6\}$. Let $G\in \mathcal{G}$ and let $k_i$, $w_{a,p}$ and $s_{b,j}$ be the numbers of the blocks isomorphic to $K_{3i-1}$, $W_{a}^{+p}$ and $S_{b,2j-1}$ for all possible $i,a,p,b,j$ in $G$ respectively.
we now give the constructions as in the following lemma.
\begin{lem}\label{construct}
For $k\ge 2$, we can pick a proper $G\in \mathcal{G}$ which is $k\mathcal{C}_{\ge3}$-saturated, if $G$ is not a complete graph and
$$k=1+\sum_{i\ge 1}(i-1)k_i+\sum_{a\ge 6,p\ge 1}w_{a,p}+\sum_{b\ge 3j\ge 6}(j-1)s_{b,j}\mbox{.}$$
\end{lem}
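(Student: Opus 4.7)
The plan is to construct $G$ as a ``bouquet'' of blocks all glued at a single common vertex $c$, and then check saturation directly. Take one block of each prescribed type and multiplicity---a $K_{3i-1}$ for each unit of $k_i$, a $W_a^{+p}$ for each unit of $w_{a,p}$, and an $S_{b,2j-1}$ for each unit of $s_{b,j}$---and identify one vertex of each as a common vertex $c$, chosen inside each block in accordance with Observation~\ref{block}: arbitrary in a $K_{3i-1}$-block, from $V\setminus X$ in an $S_{b,2j-1}$-block, and from the path part in a $W_a^{+p}$-block. The resulting $G$ lies in $\mathcal{G}$, has the prescribed block multiplicities, and is non-complete by hypothesis.

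To show $G$ is $k\mathcal{C}_{\ge3}$-free, note that every cycle of $G$ lies in a single block since blocks are maximal $2$-connected subgraphs. The maximum number of pairwise vertex-disjoint cycles inside $K_{3i-1}$, $S_{b,2j-1}$, and $W_a^{+p}$ is $i-1$, $j-1$, and $1$ respectively, and each of these maxima is attained by a packing that avoids $c$ (using the block-specific choice of $c$). Since distinct blocks share only $c$, the per-block maxima add, giving at most $\sum(i-1)k_i+\sum w_{a,p}+\sum(j-1)s_{b,j}=k-1<k$ disjoint cycles in total.

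For the saturation property, fix $e=xy\notin E(G)$ and let $x\in B_1$, $y\in B_2$. If $B_1\ne B_2$, apply Observation~\ref{block}(iii)--(v) with $u=c$ inside each $B_i$ to produce a path $P_i\subseteq B_i$ from $c$ to the endpoint of $e$ in $B_i$ such that $B_i-V(P_i)$ still contains the maximum number of disjoint cycles allowed by $B_i$. Then $P_1\cup\{e\}\cup P_2$ is a new cycle through $c$, and combining it with the disjoint cycles from $B_1-V(P_1)$ and $B_2-V(P_2)$ together with the maximum disjoint packings (avoiding $c$) in every remaining block yields $1+(k-1)=k$ pairwise vertex-disjoint cycles. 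If $B_1=B_2=B$, then $B$ is non-complete, so $B$ is an $S$- or a $W$-block; Observation~\ref{block}(i)--(ii) says $B$ is saturated on its own, hence $B+xy$ admits one additional disjoint cycle beyond the capacity of $B$, and adjoining the maximum disjoint packings (avoiding $c$) in all other blocks again delivers $k$ pairwise vertex-disjoint cycles.

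The main obstacle will be the same-block case, in which the cycle packing produced inside $B+xy$ may use $c$, so one must verify that the remaining blocks can simultaneously attain their maximum disjoint packings while avoiding $c$. This non-interaction at $c$ is what the block-specific choices ($c\notin X$ in $S$-blocks, $c\in PP$ in $W$-blocks) are designed to guarantee, and it reduces block-by-block to the same computation used in the freeness step. The most tedious part is $B=W_a^{+p}$, where the new edge $e$ can sit inside the cycle part, inside the path part, between the two parts, or between the center and the path, and a short case analysis is needed to actually exhibit two disjoint cycles of $W_a^{+p}+xy$ of the right form.
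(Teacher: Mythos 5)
Your proposal is correct and essentially identical to the paper's proof: the same one-point bouquet construction $U(G_1,\dots,G_m;v_1,\dots,v_m)$ with the same block-specific choices of the glue vertex, the same block-decomposition argument showing at most $k-1$ disjoint cycles, and the same cross-block cycle through the new edge. The only (harmless) variation is the same-block case, where you invoke the per-block saturation from Observation~\ref{block} (i)--(ii) instead of the path-extension property of Observation~\ref{block} (iii)--(v) that the paper uses; both close that case because the remaining blocks attain their maximum packings while avoiding the glue vertex.
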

\begin{proof}
Note that by (i) and (ii) in Observation~\ref{block}, the sum of the numbers of the vertex-disjoint cycles in all the blocks in $G$ is at most $\sum_{i\ge 1}(i-1)k_i+\sum_{a\ge 6,p\ge 1}w_{a,p}+\sum_{j\ge 2}(j-1)s_{b,j}=k-1<k$. Since any cycle in $G$ must be contained in a block, we see that $G$ does not contain $k$ disjoint cycles.\\ 
Now it remains to pick a proper $G$ and prove that $G+e$ contains $k$ disjoint cycles for any $e\notin E(G)$.
Let $G_1, G_2, \dots, G_m$ be all the blocks in $G$ and let $x_t$ be the maximum number of disjoint cycles in $G_t$ for any $t\in[1,m]$. Note that $\sum_{t=1}^mx_t=k-1$. For $t\in[1,m]$, we pick $v_t\in V(G_t)$ with the following rules:\\ 
(a) if $G_t\cong K_{3i-1}$ for some $i\ge 1$, pick $v_t$ arbitrarily; \\
(b) if $G_t\cong W_a^{+p}$ for some $a\ge 6$ and $p\ge 1$, pick $v_t$ be a vertex from the path part of $G_t$;\\
(c) if $G_t\cong S_{b,2j-1}$ for some $b\ge 3j\ge 6$, pick $v_t\in V(G_t)\backslash X(G_t)$.\\
Now we pick $G=U(G_1,\dots,G_m;v_1,\dots,v_m)$. We will prove that $G+u_1u_2$ contains $k$ disjoint cycles for any $u_1u_2\notin E(G)$. Let $u_1\in V(G_{t_1})$ and $u_2\in V(G_{t_2})$.
If $t_1=t_2$, then by Observation~\ref{block} (iii), (iv) and (v), we can pick a path $P\subseteq G_{t_1}$ from $u_1$ to $u_2$ with $G_{t_1}-V(P)$ still containing $x_{t_1}$ disjoint cycles. By the same observation, for any $s\in[1,m]\backslash\{t_1\}$, $G_s-\{v_s\}$ contains $x_s$ disjoint cycles. It is easy to see that all these cycles, together with $u_1u_2\cup P$, are $k$ disjoint cycles in $G+u_1u_2$.
Now suppose $t_1\neq t_2$. Similarly by Observation~\ref{block} (iii), (iv) and (v), for any $\ell\in\{1,2\}$, we can pick a path $P\subseteq G_{t_\ell}$ from $u_\ell$ to $v_{t_1}=v_{t_2}$ with $G_{t_\ell}-V(P)$ still containing $x_{t_\ell}$ disjoint cycles. Also, $G_s-\{v_s\}$ contains $x_s$ disjoint cycles for any $s\in[1,m]\backslash\{t_1, t_2\}$. All these cycles, together with the cycle $P_1\cup u_1u_2\cup P_2$, are $k$ disjoint cycles in $G+u_1u_2$.
\end{proof}

\begin{figure}[h]
\centering
\includegraphics[width=3.5in]{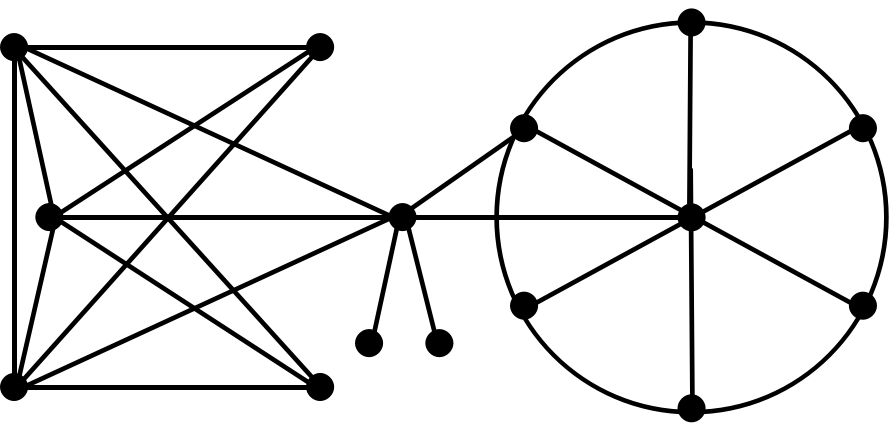}
\caption{An example for a $3\mathcal{C}_{\ge 3}$-saturated graph by Lemma~\ref{construct}}\label{c1}
\end{figure}

By Lemma~\ref{construct} and some calculations, we give the following theorem.
\begin{thm}\label{cal}
(i) For $k\ge 2$ and $n\ge \max\{3k,4k-3\}$, $\sat (n,k\mathcal{C}_{\ge3})\le n+6k-7$.\\
(ii) For $k\ge 2$ and $n\ge 4k-1$, $[n+6k-7,2n+2k-6]\subset \ES(n,k\mathcal{C}_{\ge3})$. \\
(iii) For $k\ge 3$ and $n\ge5k-1$, $[n+6k-7,3n-3k-3]\subset \ES(n,k\mathcal{C}_{\ge3})$.\\
(iv) For $k\ge 5$ and $n\ge 5k-4$, $[2n+4k^2-24k+34,(2k-5)n-6k^2+21k-3]\subset \ES(n,k\mathcal{C}_{\ge3})$.\\
(v) For $k\ge 2$ and $n\ge 3k$, $\{n-2k^2+k+(2k-2)t: t\in[3k,n]\}\subset \ES(n,k\mathcal{C}_{\ge3})$.
\end{thm}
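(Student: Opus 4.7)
The approach is to apply Lemma~\ref{construct} to carefully chosen block configurations drawn from the allowed families $\{K_{3i-1}\}\cup\{W_a^{+p}\}\cup\{S_{b,2j-1}\}$. Recall that in the one-vertex identification construction used in the proof of Lemma~\ref{construct}, blocks $G_1,\dots,G_m$ with orders $s_i$, sizes $e_i$, and cycle contributions $c_i$ yield a $k\mathcal{C}_{\ge 3}$-saturated graph on $1+\sum(s_i-1)$ vertices with $\sum e_i$ edges, provided $\sum c_i = k-1$ and $G$ is not complete. The block data I will use throughout are $(s,e,c)=(2,1,0)$ for $K_2=K_{3\cdot 1-1}$ (which acts as a pendant edge), $(5,10,1)$ for $K_5$, $(a+p,\,2a+p-1,\,1)$ for $W_a^{+p}$, and $(b,\,(2j-1)(b-j),\,j-1)$ for $S_{b,2j-1}$.

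Parts (i) and (v) use single-shape constructions. For (i), I will take $(k-1)$ copies of $K_5$ glued at a common vertex, padded with $n-4k+3$ pendant $K_2$'s, which has $n$ vertices and $10(k-1)+(n-4k+3)=n+6k-7$ edges, with all $k-1$ cycles supplied by the $K_5$'s. For (v), I take one $S_{b,2k-1}$ block (contributing all $k-1$ cycles) plus $n-b$ pendant $K_2$'s; the edge count is $(2k-1)(b-k)+(n-b)=n-2k^2+k+(2k-2)b$, which for $b\in[3k,n]$ traces out the claimed set.

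For (ii)--(iv), I will mix block types. For (ii), replacing one $K_5$ in (i)'s construction by a $W_a^{+1}$ block yields edges $n+6k-13+a$, so letting $a\in[6,n-4k+7]$ fills $[n+6k-7,2n+2k-6]$ one integer at a time. For (iii), replacing two $K_5$'s by one $W_a^{+1}$ and one $S_{b,3}$ gives edges $n+6k-24+a+2b$; since the $a$-step is $1$ and the $b$-step is $2$, varying $(a,b)$ in the region $\{a\ge 6,\,b\ge 6,\,a+b\le n-4k+12\}$ covers every integer in $[n+6k-6,3n-2k-6]$, which (for $k\ge 3$) contains the claimed range $[n+6k-7,3n-3k-3]$ after merging with the interval from (ii). For (iv), I will use one $S_{b,2k-5}$ block (for $k-3$ cycles) together with one $K_5$ and one $W_a^{+1}$ (for the remaining two cycles); the edge count becomes $(2k-6)b+a+n-2k^2+9k-4$, and for each $b\in[3k-6,n-10]$ letting $a$ range over an interval of length at least $2k-6$ ensures the $b$-slices tile without gaps, covering every integer in $[n+4k^2-21k+38,\,(2k-5)n-2k^2-11k+62]$, which contains the claimed interval for $k\ge 5$ and $n\ge 5k-4$.

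The main obstacle is the bookkeeping: for each target edge count, I must verify that the corresponding parameter choice $(a,b,t)$ satisfies $a\ge 6$, $b\ge 6$ or $3k-6$, and $t\ge 0$ for the pendant $K_2$ count. In (iii) and (iv) the crux is that the $b$-step in the edge count is $\ge 2$, so different residues of $a$ must be used to hit both parities; this is handled by letting $a$ sweep through $[6,6+(2k-6)-1]$ for each fixed $b$, which ensures consecutive $b$-slices overlap and no integer is missed. All remaining requirements of Lemma~\ref{construct}, namely $\sum c_i=k-1$ and the non-completeness of $G$, are immediate in every configuration because at least one pendant $K_2$ (or one $W_a^{+p}$ with $p\ge 1$) is always present.
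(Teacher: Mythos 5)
Your parts (i), (ii) and (v) coincide with the paper's own choices in the proof of Theorem~\ref{cal} ($k-1$ copies of $K_5$ plus pendant edges; one $W_a^{+1}$ replacing a $K_5$; a single $S_{b,2k-1}$ plus pendant edges), and your part (iii) uses a slightly different but valid configuration (one $W_a^{+1}$ plus one $S_{b,3}$ plus $K_5$'s and pendants, versus the paper's $k-3$ or $k-2$ copies of $S_{6,2}$ plus one $W_{a_0}^{+1}$); your covering argument there is sound, since writing $v=a+2b$ with $(a,b)\in\{(6,\cdot),(7,\cdot)\}$ hits every integer $v\in[18,2n-8k+18]$, and the union with the interval from (ii) absorbs the one value $n+6k-7$ missing at the bottom.

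Part (iv), however, has a genuine gap. Your edge count is $f(a,b)=(2k-6)b+a+n-2k^2+9k-4$ subject to $a\ge 6$, $b\ge 3k-6$ and $t=n-4-a-b\ge 0$. For consecutive $b$-slices to tile, $a$ must sweep $2k-6$ consecutive integers starting at $6$, which needs $n-4-b\ge 2k-1$, i.e.\ $b\le n-2k-3$; but you let $b$ run up to $n-10$, and for $k\ge 5$ one has $n-2k-3<n-10$, so near the top the $a$-range is squeezed below the needed length and the slices leave holes. The gap-free part of your construction therefore only reaches $f(2k-1,\,n-2k-3)=(2k-5)n-6k^2+17k+13$, short of the required upper endpoint $(2k-5)n-6k^2+21k-3$ by $4k-16\ge 4$. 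Concretely, the value $(2k-5)n-6k^2+19k+7$ cannot be written as $f(a,b)$ under your constraints (the conditions $a\ge 6$ and $a+b\le n-4$ force simultaneously $b\le n-2k-2$ and $b\ge n-2k-1$), yet it lies strictly inside the claimed interval for $k\ge 5$ and $n\ge 5k-4$. The paper avoids this by taking three non-pendant blocks $S_{b_1,2k-5}$, $S_{b_2,3}$ and $W_{a_0}^{+1}$ with no pendant edges: the subtracted quantity is then $(2k-7)a_0+(2k-8)b_2$, whose two coefficients differ by one, so each slice at fixed $a_0+b_2=c\ge 2k+3$ has length at least $2k-8$ and the slices tile all the way to the top. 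To repair your argument you need an analogous third degree of freedom at the upper end of the range.
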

\begin{proof}
Pick $G$ in Lemma~\ref{construct} with $|V(G)|=n$. 
Throughout the proof, we let $k_i=0$ for $i\ge 3$ and $w_{a,p}=0$ for $p\ge 2$.\\
For the proof of (i) and (ii), let $s_{b,j}=0$ for all possible $b$ and $j$ in addition. Then by Lemma~\ref{construct}, we have 
\begin{equation}\label{e1}
\sum_{a\ge 6}w_{a,1}=k-k_2-1\mbox{.}
\end{equation}
Also, by simple calculations,
\begin{equation}\label{e2}
n=1+k_1+4k_2+\sum_{a\ge 6}aw_{a,1}\mbox{,}
\end{equation}
\begin{equation}\label{e3}
|E(G)|=k_1+10k_2+2\sum_{a\ge 6}aw_{a,1}=2n-2+2k_2-k_1\mbox{.}
\end{equation}
We firstly pick $w_{a,1}=0$ for any $a\ge 6$. 
Then by (\ref{e1}), $k_2=k-1$ and thus by (\ref{e2}) and (\ref{e3}) $n=1+k_1+4k_2=4k-3+k_1$ and $|E(G)|=(2n-2)-2(k-1)-(n-4k+3)=n+6k-7$. Therefore, by choosing different $k_1\ge 0$ when $k\ge 3$ and $k_1\ge 1$ when $k=2$ (since $G\cong K_5$ is a complete graph when $k_1=0$), we can make $G$ be any $k\mathcal{C}_{\ge 3}$-saturated graph with $n\ge\max\{3k,4k-3\}$ and $n+6k-7$ edges. This completes (i).\\
Now we pick $w_{a_0,1}=1$ for some $a_0\ge 6$ and  $w_{a,1}=0$ for any $a\ge 6$ and $a\neq a_0$. 
By (\ref{e1}), $k_2=k-2$. By (\ref{e2}), $n=a_0+k_1+4k-7\ge 4k-1$. It is easy to check that, for any fixed $n\ge 4k-1$ and $k_1\in[0, n-4k+1]$, we always have a proper value for $a_0\ge 6$.
Thus by (\ref{e3}), $|E(G)|=2n+2k-6-k_1$ can be any value in $[n+6k-7,2n+2k-6]$ for any $n\ge 4k-1$, which completes the proof of (ii).\\
For the proof of (iii) and (iv), let $k_1=k_2=0$, $w_{a_0,1}=1$ for some $a_0\ge 6$ and  $w_{a,1}=0$ for any $a\ge 6$ and $a\neq a_0$. Then Similarly as the previous proof, we have the follwing equations,
\begin{equation}\label{e4}
k-2=\sum_{b\ge 3j\ge 6}(j-1)s_{b,j}\mbox{,}
\end{equation} 
\begin{equation}\label{e5}
n=1+a_0+\sum_{b\ge 3j\ge 6}(b-1)s_{b,j}\mbox{,}
\end{equation}
\begin{equation}\label{e6}
|E(G)|=2a_0+\sum_{b\ge 3j\ge 6}(2j-1)(b-j)s_{b,j}\mbox{.}
\end{equation}
For (iii), let $s_{b,j}=0$ for any $j\neq 2$ and let $s_{6,2}\in\{k-3,k-2\}$. Then by (\ref{e4}), there exists at most one integer $b\ge 7$ with $s_{b,2}\ge 1$ and we must have $s_{6,2}=k-3$ and $s_{b,2}=1$ when it exists. By (\ref{e4}), (\ref{e5}) and (\ref{e6}), one can check that
\begin{equation}\label{e12}
|E(G)|=3n-3k+3-a_0\mbox{.}
\end{equation}
If $s_{6,2}=k-3$, suppose $b_0\ge 7$ is the only integer $b\ge 7$ with $s_{b,2}=1$.
By (\ref{e5}), $6\le a_0=n-5k+14-b_0\le n-5k+7$. Clearly, for fixed $n$ and $k$, we can arbitrarily choose the value of $a_0$ from $[6, n-5k+7]$ since $n\ge 5k-1$. So by (\ref{e12}), $|E(G)|$ can be any of the number in $[2n+2k-4,3n-3k-3]$.
If $s_{6,2}=k-2$, by (\ref{e5}), $a_0=n-5k+8\ge 6$. This implies $|E(G)|=2n-2k-5$ by (\ref{e12}). Therefore,  $|E(G)|$ can be any of the number in $[2n+2k-5,3n-3k-3]$. 
Thus by (ii), we are done for (iii).\\
For (iv), let $s_{b_1,k-2}=1$ for some $b_1\ge 3k-6$. Then by (\ref{e4}), we can see that $s_{b_2,2}=1$ for some $b_2\ge 6$ and $s_{b,j}=0$ for any other possible $(b,j)$. By (\ref{e4}), (\ref{e5}) and (\ref{e6}), one can check that
\begin{equation}\label{e7}
|E(G)|=(2k-5)n-2k^2+11k-21-((2k-8)(a_0+b_2)+a_0)\mbox{.}
\end{equation}
Note that by (\ref{e5}), $12\le a_0+b_2=n+1-b_1\le n+7-3k$. It is easy to see that, for any fixed $n$ and $k$, we can arbitrarily choose the value of $a_0+b_2$ from $[12,n+7-3k]$ and then choose the value of $a_0$ from $[6,a_0+b_2-6]$. 
Now let $A$ be all the values can be chosen by $(2k-8)(a_0+b_2)+a_0$ for fixed $n$ and $k$,
and let $A(c)$ be all the values can be chosen by $(2k-8)(a_0+b_2)+a_0$ for fixed $n$, $k$ and $a_0+b_2=c$. Note that since $k\ge 5$ and $n\ge 5k-4$, it holds $2k+3\in[12,n+7-3k]$, so we can pick $c\in[2k+3,n+7-3k]$. Then $[(2k-8)c+6,(2k-8)(c+1)+5]\subset[(2k-8)c+6,(2k-7)c-6]\subseteq A(c)$. Therefore, 
$$[(2k-8)(2k+3)+6,(2k-7)(n+7-3k)-6]\subseteq\bigcup_{c\in[2k+3,n+7-3k]}A(c)\subseteq A\mbox{.}$$
Thus, 
\begin{equation}\label{e8}
[4k^2-10k-18,(2k-7)n-6k^2+35k-55]\subseteq A\mbox{.}
\end{equation} 
Let $D$ be the set of all the values can be chosen by $|E(G)|$.
Clearly, by (\ref{e7}) and (\ref{e8}), 
\begin{equation}\label{e9}
[2n+4k^2-24k+34,(2k-5)n-6k^2+21k-3]\subseteq D\mbox{.}
\end{equation}
Thus, we complete the proof of (iv).\\
Now we give the proof of (v). Let $k_2=0$ and $w_{a,1}=0$ for any $a\ge 6$ in this case. Let $(b_0,j_0)$ with $b_0\ge 3j_0\ge 6$ be the only pair of $(b,j)$ with $s_{b,j}\ge 0$ and let $s_{b_0,j_0}=1$. Then similarly, we get $j_0=k$, $n=k_1+b_0$ and
\begin{equation}\label{e13}
|E(G)|=k_1+(2j_0-1)(b_0-j_0)=(2k-2)b_0+n-2k^2+k\mbox{.}
\end{equation}
 Note that $3k=3j_0\le b_0=n-k_1\le n$. Apparently, we can arbitrarily choose the value of $b_0$ from $[3k,n]$ since $n\ge 3k$. Therefore, $|E(G)|$ can be any number in $\{n-2k^2+k+(2k-2)t: t\in[3k,n]\}$. This completes the proof.
\end{proof}
Note that Theorem~\ref{k} (i) can be deduced by Theorem~\ref{cal} (i), and Theorem~\ref{k} (ii)  follows from Theorem~\ref{cal} (ii) and (iii), since when $n\ge 4k^2-21k+37>5k-1$, it holds $3n-3k-3\ge 2n+4k^2-24k+34$. Thus, Theorem~\ref{k} is done.

\section{Saturation number of $2\mathcal{C}_{\ge3}$}
For any graph $G$, a {\it subdivision} of $G$ is a graph obtained from $G$ be replacing the edges of $G$ with internally disjoint paths of sizes at least $1$. We use $\mathcal{S}(G)$ to denote the family of all the subdivisions of $G$. 
We have the following lemma.
\begin{lem}\label{sub}
Let $G$ be a graph and $H\in\mathcal{S}(G)$, then $|E(H)|-|V(H)|=|E(G)|-|V(G)|$. Moreover, if $H$ is $k\mathcal{C}_{\ge3}$-saturated for some $k\ge 2$, then $G\cong K_{3k-1}$ or $G$ is also $k\mathcal{C}_{\ge3}$-saturated.
\end{lem}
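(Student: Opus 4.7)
\textbf{Proof plan for Lemma~\ref{sub}.}
The first identity follows by induction on the number of edges subdivided: a single subdivision operation replaces an edge by a path of length $\ell \ge 1$, adding $\ell - 1$ new vertices and $\ell - 1$ new edges, so $|E| - |V|$ is preserved at every step. Iterating this along any sequence of subdivisions building $H$ from $G$ yields $|E(H)| - |V(H)| = |E(G)| - |V(G)|$.

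The core of the second assertion is a projection $\pi$ from cycles of $H$ to cycles of $G$. Call a vertex of $H$ a \emph{hub} if it lies in $V(G) \subseteq V(H)$ and a \emph{subdivision vertex} otherwise; every subdivision vertex has degree exactly $2$ in $H$ and lies on the unique subdivision path corresponding to some edge of $G$. If a cycle $C$ of $H$ meets a subdivision vertex $w$, then $C$ must contain both $H$-edges at $w$, and iterating this forces $C$ to traverse the entire subdivision path through $w$ from one hub endpoint to the other. Consequently the hubs visited by $C$, in cyclic order, form a sequence $v_1 v_2 \cdots v_t$ with $t \ge 3$ in which each consecutive pair is joined by an edge of $G$, so $\pi(C) := v_1 v_2 \cdots v_t v_1$ is a cycle of $G$. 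Vertex-disjoint cycles of $H$ use disjoint hub sets and disjoint subdivision paths, so $\pi$ preserves disjointness; conversely, any $k$ vertex-disjoint cycles of $G$ lift to $k$ vertex-disjoint cycles of $H$ by expanding each edge along its subdivision path.

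With $\pi$ in hand the conclusion follows quickly. Because $H$ is $k\mathcal{C}_{\ge3}$-free, the lifting direction shows $G$ cannot contain $k$ disjoint cycles either. For any non-edge $uv \notin E(G)$ with $u,v \in V(G)$ we have $uv \notin E(H)$, since an $H$-edge between two hubs must come from an unsubdivided $G$-edge. Saturation of $H$ then yields $k$ disjoint cycles in $H+uv$; viewing $H+uv$ as a subdivision of $G+uv$ (with the new edge unsubdivided), applying $\pi$ produces $k$ disjoint cycles in $G+uv$. Hence $G$ is $k\mathcal{C}_{\ge3}$-saturated whenever $G$ has a non-edge. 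Otherwise $G$ is complete, so $G \cong K_m$ for some $m$, and $k\mathcal{C}_{\ge3}$-freeness gives $m \le 3k-1$; the case $m = 3k-1$ is exactly the first alternative, while $m < 3k-1$ yields a vacuously saturated complete graph handled by the second alternative.

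The principal technical point, and the step I expect to require the most care, is the projection: namely, verifying that a cycle of $H$ really projects to a \emph{cycle} (and not merely a closed walk with repeated hubs) of $G$ and that the disjointness of several cycles is genuinely preserved under $\pi$. Once these are established, the saturation argument reduces to the routine lift-and-project just described.
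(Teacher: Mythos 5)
Your proof is correct and rests on the same core idea as the paper's: contract each subdivision path of $H$ back to the corresponding edge of $G$ in order to transfer $k$ disjoint cycles from $H+uv$ to $G+uv$, using the fact that a non-edge of $G$ between original vertices is still a non-edge of $H$. The paper organizes this as an iteration over single subdivision steps $S_{uv}(\cdot)$, so its ``projection'' only ever has to undo one new degree-$2$ vertex at a time; you instead define the projection $\pi$ globally and verify directly that a cycle of $H$ must traverse entire subdivision paths and therefore projects to a genuine cycle (not a closed walk) of $G$ --- you correctly flag the check that at least three hubs are visited as the point needing care, and your sketch of it is sound. The one place you genuinely diverge is the complete-graph case: the paper asserts directly that a subdivision of a complete graph is $k\mathcal{C}_{\ge3}$-saturated only when the base is $K_{3k-1}$, whereas you observe that any complete $K_m$ that is $k\mathcal{C}_{\ge3}$-free (hence $m\le 3k-1$) is vacuously saturated, so the disjunction in the statement holds in either subcase. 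This is a legitimate and arguably cleaner way to close that case, since it avoids the unstated verification behind the paper's ``one can easily check.''
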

\begin{proof}
Let $uv\in E(G)$. Define $S_{uv}(G)$ be the graph obtained from $G$ by replacing the edge $uv$ with a path $uwv$ for a new vertex $w\notin V(G)$. Clearly, $S_{uv}(G)\in\mathcal{S}(G)$, and we can get $H$ from $G$ by iteratively using the operation $S_e(\cdot)$ for different edges $e$. Thus, we only need to prove for $H=S_{uv}(G)$.
Suppose $H=S_{uv}(G)$ and let $V(H)\backslash V(G)=\{w\}$. 
It is easy to see that $|E(H)|-|V(H)|=(|E(G)|+1)-(|V(G)|+1)=|E(G)|-|V(G)|$, so we only need to prove for the second part.
If $G$ is a complete graph, one can easily check that $H$ is $k\mathcal{C}_{\ge3}$-saturated if and only if $|V(G)|=3k-1$, i.e. $G\cong K_{3k-1}$. Now suppose $G$ is not complete. Clearly, if $G$ contains $k$ disjoint cycles, then $H$ must contain the subdivisions of these cycles, which are also $k$ disjoint cycles. This contradicts the fact that $H$ is $k\mathcal{C}_{\ge3}$-saturated. Hence,  $G$ does not contain $k$ disjoint cycles.
Suppose the contrary that $G$ is not  $k\mathcal{C}_{\ge3}$-saturated.
Then there exists a pair of vertices $ab\notin E(G)$ with $G+ab$ not containing $k$ disjoint cycles.
Since $H$ is $k\mathcal{C}_{\ge3}$-saturated, there exist $k$ disjoint cycles $C^1,\dots, C^k$ in $H+ab$. If $w\notin C^i$ for any $i\in[1,k]$, then $C^1,\dots, C^k$ are also $k$ disjoint cycles in $G+ab$, a contradiction. Thus, we can suppose $w\in C^1$. Note that $uv\notin E(H)$. So we can replace the path $uwv$ by $uv$ in $H$ and obtain a cycle $C^{1-}$ in $G$. Then $C^{1-},C^2,\dots, C^k$ are $k$ disjoint cycles in $G+ab$, a contradiction.
\end{proof}
We use a trivial block to denote a block isomorphic to $K_2$.
For any graph $G$, let $M_0(G)$ be the graph obtained from $G$ by iteratively deleting the vertices of degree one until there is no vertex of degree one in the graph. Let $M(G)$ be the graph of minimum size with $M_0(G)\in\mathcal{S}(M(G))$. 
For any integer $i\ge 0$, let $D_i(G)=\{v\in V(G): d_{G}(v)=i\}$.
We have the following proposition for $M(G)$.
\begin{prop}\label{mg}
Let $G$ be a connected graph. The following hold.\\
(i) $|E(M(G))|-|V(M(G))|=|E(G)|-|V(G)|$.\\
(ii) The number of non-trivial blocks in $G$, $M_0(G)$ and $M(G)$ are the same. In particular, If $G$ contains exact one non-trivial block, then $M(G)$ is $2$-connected.\\
(iii) If $G$ contains some non-trivial blocks, then $\delta(M(G))\ge 2$. Moreover, if $v\in D_2(M(G))$ and $N_{M(G)}(v)=\{u_1,u_2\}$, then $u_1u_2\in E(M(G))$.\\
(iv) If $G$ is $k\mathcal{C}_{\ge 3}$-saturated for some $k\ge 2$, then either $M(G)$ is also $k\mathcal{C}_{\ge 3}$-saturated, or $M(G)\cong K_{3k-1}$.
\end{prop}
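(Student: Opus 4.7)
The plan is to dispatch the four parts in the order (i), (iii), (ii), (iv), since the ``in particular'' of (ii) relies on $\delta(M(G))\ge 2$ from (iii), and (iv) will reduce cleanly to Lemma~\ref{sub} once $M_0(G)$ is known to be $k\mathcal{C}_{\ge 3}$-saturated. For (i), deleting a leaf drops both $|V|$ and $|E|$ by one, so the quantity $|E|-|V|$ is invariant along the leaf-stripping that produces $M_0(G)$; then $M_0(G)\in\mathcal{S}(M(G))$ together with Lemma~\ref{sub} closes the chain.

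For (iii), the hypothesis that $G$ has a non-trivial block says $G$ contains a cycle, so $M_0(G)$ is nonempty and, by its defining property, satisfies $\delta(M_0(G))\ge 2$. I will view $M(G)$ as obtained from $M_0(G)$ by iteratively suppressing degree-$2$ vertices $v$ whose neighbors $u_1,u_2$ satisfy $u_1u_2\notin E$, that is, by replacing the path $u_1vu_2$ by the single edge $u_1u_2$; each suppression preserves the degree of every surviving vertex, so $\delta(M(G))\ge 2$. For the second assertion, if some $v\in D_2(M(G))$ with $N_{M(G)}(v)=\{u_1,u_2\}$ had $u_1u_2\notin E(M(G))$, then suppressing $v$ would produce a simple graph $M'$ with $|E(M')|<|E(M(G))|$ and $M_0(G)\in\mathcal{S}(M')$, contradicting the minimality of $M(G)$.

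For (ii), deleting a leaf only removes the trivial block carrying that leaf, so $G$ and $M_0(G)$ have the same multiset of non-trivial blocks. A single suppression either lies inside one non-trivial block (shrinking a $2$-connected subgraph to a still-$2$-connected one) or merges two trivial blocks meeting at the suppressed cut vertex; the alternative case of the two edges at $v$ lying in distinct non-trivial blocks is impossible because every vertex of a non-trivial block has degree at least $2$ within that block. Hence the non-trivial block count is unchanged and the first assertion follows. For the ``in particular,'' if $G$ has exactly one non-trivial block then so does $M(G)$, call it $B$, and $M(G)$ has no leaves by (iii); were $M(G)\neq B$, the block-cut tree of $M(G)$ would contain a trivial leaf block whose non-cut-vertex endpoint would have degree $1$ in $M(G)$, a contradiction.

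For (iv), I will first show leaf deletion preserves $k\mathcal{C}_{\ge 3}$-saturation: if $v$ is a leaf of $G$ and $G'=G-v$, then $G'$ still contains no $k$ disjoint cycles, and for any non-edge $ab$ of $G'$ the $k$ disjoint cycles in $G+ab$ all avoid $v$ (since $v$ has degree $1$ in $G+ab$), hence already live in $G'+ab$. Iterating gives that $M_0(G)$ is $k\mathcal{C}_{\ge 3}$-saturated, and Lemma~\ref{sub} applied with $H=M_0(G)$ and base graph $M(G)$ yields the dichotomy in (iv). The hardest step is the block-decomposition bookkeeping in (ii): carefully classifying what each suppression does to the blocks and then extracting the appropriate leaf of the block-cut tree for the ``in particular'' argument, whereas the other items are either direct invariance checks or immediate appeals to Lemma~\ref{sub}.
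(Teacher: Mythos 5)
Your proof is correct, and its skeleton---leaf-stripping invariance for (i), minimality of $M(G)$ for the second half of (iii), and a reduction of (iv) to Lemma~\ref{sub}---is the paper's. The one genuinely different step is (iv): you prove uniformly that $M_0(G)$ is $k\mathcal{C}_{\ge 3}$-saturated by showing that deleting a degree-one vertex $v$ preserves saturation (every non-edge $ab$ of $G-v$ is a non-edge of $G$, and the $k$ disjoint cycles in $G+ab$ avoid the degree-one vertex $v$), and then invoke Lemma~\ref{sub} once. The paper instead argues via blocks and splits on whether $M_0(G)$ is complete; in the complete case it extracts the sharper conclusion $M(G)\cong K_{3k-1}$ by adding an edge from a pendant vertex into $M_0(G)$, whereas your route simply lands in the ``saturated'' branch there (a complete $M_0(G)$ with no $k$ disjoint cycles is vacuously saturated). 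Since the proposition only asserts the disjunction, your version suffices and is arguably cleaner; the paper's extra precision in the complete case is not needed for the statement. The remaining deviations are cosmetic: for $\delta(M(G))\ge 2$ in (iii) and the block bookkeeping in (ii) you track the suppression operation explicitly, which tacitly uses the (true and easily checked) fact that $M(G)$ is obtained from $M_0(G)$ by a sequence of single suppressions of degree-two vertices with non-adjacent neighbours, while the paper argues directly from $D_1(M(G))=D_1(M_0(G))=\emptyset$ together with connectivity; and your ``in particular'' in (ii) locates a trivial leaf block in the block-cut tree of $M(G)$ where the paper finds a leaf of the pendant forest attached to the unique non-trivial block of $M_0(G)$---the same idea in different clothing.
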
  
\begin{proof}
For (i), note that every time we delete a vertex of degree one, we must delete exact one edge. This gives $|E(M_0(G))|-|V(M_0(G))|=|E(G)|-|V(G)|$. Now since $M_0(G)\in\mathcal{S}(M(G))$, by Lemma~\ref{sub}, $|E(M(G))|-|V(M(G))|=|E(M_0(G))|-|V(M_0(G))|=|E(G)|-|V(G)|$. We are done.\\
For (ii), since any non-trivial block $B$ of $G$ has $\delta(B)\ge 2$, all the non-trivial blocks in $G$ remain the same in $M_0(G)$. Also, by the definition of subdivision, it is easy to see that there is a bijection from the the non-trivial blocks in $M_0(G)$ to the the non-trivial blocks in $M(G)$. This completes the proof of the first part of (ii). 
If $G$ contains exact one non-trivial block $B$, then the trivial blocks in $G$ induce a forest with each component (which is a non-empty tree) sharing exact one vertex with $B$. Since $M_0(G)$ is a connected subgraph of $G$ and $B\subseteq M_0(G)$, this property still holds in $M_0(G)$ if there are still some trivial blocks in $M_0(G)$. Note that every component of the forest in $M_0(G)$ has at least one vertex of degree $1$ other than the shared vertex with $B$. This implies $D_1(M_0(G))\neq\emptyset$, which is a contradiction by the definition of $M_0(G)$. Thus, there is no trivial block in $M_0(G)$. Therefore, $M_0(G)=B$ is $2$-connected. This then makes $M(G)$ $2$-connected, too.\\
For (iii), note that $D_1(M(G))=D_1(M_0(G))=\emptyset$. If  $\delta(M(G))<2$, then $D_0(M(G))\neq\emptyset$. Since $M(G)$ is connected, this means $M(G)\cong K_1$. However, by (ii), $M(G)$ has non-trivial blocks, which is a contradiction. Thus, $\delta(M(G))\ge 2$. 
Moreover, if $v\in D_2(M(G))$ and $N_{M(G)}(v)=\{u_1,u_2\}$ with $u_1u_2\notin E(M(G))$, let $H=(M(G)-\{v\})+u_1u_2$. Clearly, $G\in\mathcal{S}(M(G))\subset\mathcal{S}(H)$ and $H$ has a smaller size than $M(G)$. This is a contradiction by the minimality of $M(G)$.\\
Now we show (iv) to complete the proof. 
Since $k\ge 2$, $G$ contains at least one non-trivial block (since it contains at least $k-1$ disjoint cycles). So if $M_0(G)$ is a complete graph, $M_0(G)$ must be the only non-trivial block of $G$ by (ii). Also, $M_0(G)\cong K_t$ does not contain $k$ disjoint cycles, so it holds $t\le 3k-1$. Since $G$ is connected and not complete, pick $x\in V(G)\backslash V(M_0(G))$ with $xy\in E(G)$ for some $y\in V(M_0(G))$. By definitions, $xz\notin E(G)$ for any $z\in V(M_0(G))\backslash\{y\}$. Note that $H=(G+xz)[V(M_0(G))\cup\{z\}]$ is the only non-trivial block in $G+xz$, which must contains $k$ disjoint cycles. However, if $t\le 3k-2$, then $V(H)\le 3k-1$, which implies that $H$ cannot contain $k$ disjoint cycles. This means $t=3k-1$ and $M(G)=M_0(G)\cong K_{3k-1}$. We are done.
Now suppose $M_0(G)$ is not complete and let $uv$ be a pair of vertices in $V(M_0(G))$ with $uv\notin E(M_0(G))=E(G[V(M_0(G))])$. Since $G$ is $k\mathcal{C}_{\ge 3}$-saturated, there exist $k$ disjoint cycles in $G+uv$.  Similarly as in the proof of (ii), all the non-trivial blocks in $G$ remain the same in $M_0(G)$, which means all the edges in $(G+uv)\backslash M_0(G)$ are trivial blocks. Hence the $k$ disjoint cycles in $G+uv$ do not use the edges in $(G+uv)\backslash M_0(G)$ since the edges in $(G+uv)\backslash M_0(G)$ are still trivial blocks in $G+uv$. Thus, these $k$ disjoint cycles are also in $M_0(G)$, which implies that $M_0(G)$ is a $k\mathcal{C}_{\ge 3}$-saturated graph. By Lemma~\ref{sub}, either $M(G)$ is also $k\mathcal{C}_{\ge 3}$-saturated, or $M(G)\cong K_{3k-1}$. This completes the proof.
\end{proof}
We call any $2$-connected graph $G$ with $M(G)=G$ a good graph. Note that for any graph $H$ with exact one non-trivial block, $M(H)$ is a good graph by Propostion~\ref{mg} (iv).

Now we focus on $2\mathcal{C}_{\ge 3}$-saturated graph.
To make the proof more clear, we firstly mention the following two facts.
\begin{fact}\label{f1}
Let $G$ be a $2\mathcal{C}_{\ge 3}$-saturated graph and $uv$ be a pair of vertices with $uv\notin E(G)$. Then there exist a path $P$ from $u$ to $v$ on at least $3$ vertices and a cycle $C$ in $G$ with $V(C)\cap V(P)=\emptyset$. In particular, $G$ is connected.
\end{fact}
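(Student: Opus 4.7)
The proof is essentially immediate from unpacking the definition of $2\mathcal{C}_{\ge 3}$-saturation, so the plan is short. Since $G$ is $2\mathcal{C}_{\ge 3}$-saturated and $uv \notin E(G)$, the graph $G+uv$ contains two vertex-disjoint cycles $C_1$ and $C_2$, while $G$ itself does not. From this I would derive the claim in three quick steps.

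First, I would argue that exactly one of $C_1, C_2$ must contain the new edge $uv$. At least one of them does, for otherwise both cycles lie in $G$, contradicting that $G$ is $2\mathcal{C}_{\ge 3}$-free. Both cannot, since $C_1$ and $C_2$ are vertex-disjoint and the edge $uv$ belongs to a unique cycle of the pair. Relabel so that $uv \in E(C_1)$.

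Second, I would set $P := C_1 - uv$. Then $P$ is a path from $u$ to $v$ using only edges of $G$, and since $C_1$ is a cycle of length at least $3$, $P$ has at least $3$ vertices. Moreover $V(P) = V(C_1)$, so $V(C_2) \cap V(P) = \emptyset$, giving the required disjoint cycle $C := C_2$ in $G$.

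Finally, for connectivity, I would argue by contradiction. If $G$ were disconnected, pick $u, v$ in distinct components; then $uv \notin E(G)$, but the first part produces a $u$--$v$ path in $G$, contradicting that $u$ and $v$ lie in different components. The whole argument is a direct unpacking of the definition, so there is no real obstacle; the only thing worth being careful about is the observation that $uv$ cannot lie in both $C_1$ and $C_2$, which is where vertex-disjointness of the two cycles is used.
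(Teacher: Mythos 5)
Your proof is correct and is exactly the direct unpacking of the definition that the paper intends (the paper states this Fact without proof). The key observation that exactly one of the two disjoint cycles in $G+uv$ can use the edge $uv$, so that the other lies entirely in $G$ and the first minus $uv$ gives the path, is the right argument, and the connectivity conclusion follows as you describe.
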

\begin{fact}\label{f2}
Let $G$ be a $2\mathcal{C}_{\ge 3}$-saturated graph and $C\subset G$ be a cycle. If $|V(C)|<|V(G)|$, then $G-V(C)$ is a forest.
\end{fact}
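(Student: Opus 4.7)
The plan is a one-line contradiction straight from the definition of saturation, with no appeal to the structural machinery (Lemma~\ref{sub}, Proposition~\ref{mg}, Observation~\ref{block}) developed earlier. Since $G$ is $2\mathcal{C}_{\ge 3}$-saturated, the first clause of the definition says $G$ contains no subgraph isomorphic to a member of $2\mathcal{C}_{\ge 3}$, that is, $G$ contains no pair of vertex-disjoint cycles. So I would assume for contradiction that $G-V(C)$ is not a forest, pick any cycle $C'$ inside it, and observe that $V(C)\cap V(C')=\emptyset$ by construction. Then $C\cup C'$ is a copy of a graph in $2\mathcal{C}_{\ge 3}$ sitting inside $G$, immediately contradicting saturation.

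The only role of the hypothesis $|V(C)|<|V(G)|$ is to make the statement non-vacuous by ensuring $G-V(C)$ is nonempty; if $|V(C)|=|V(G)|$ then $G-V(C)$ is the empty graph, which is trivially a forest. Notice that the proof uses only the ``$G$ contains no forbidden copy'' half of the saturation definition, not the ``$G+e$ creates a copy'' half, and in particular does not invoke Fact~\ref{f1}. There is no genuine obstacle here; the only thing to be careful about is to spell out that the cycle $C'$ produced by the assumption is automatically vertex-disjoint from $C$ because it lies entirely in $G-V(C)$, and hence $\{C,C'\}$ legitimately witnesses a $2\mathcal{C}_{\ge 3}$ subgraph of $G$.
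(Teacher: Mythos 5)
Your proof is correct and is exactly the (unwritten) argument the paper intends: the paper states Fact~\ref{f2} without proof precisely because a cycle in $G-V(C)$ would be vertex-disjoint from $C$ and hence witness a member of $2\mathcal{C}_{\ge 3}$ in $G$, contradicting the first half of the saturation definition. Your remark that the hypothesis $|V(C)|<|V(G)|$ only serves to make the conclusion non-vacuous is also accurate.
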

Here are three properties for a $2\mathcal{C}_{\ge 3}$-saturated graph.
\begin{lem}\label{pre}
Let $G$ be a $2\mathcal{C}_{\ge 3}$-saturated graph. Then the following hold.\\
(i) $G$ is a connected graph with exact one non-trivial block.\\
(ii) If $G$ is good, then there exists a cycle of size at most $4$ in $G$.\\
(iii) Let $C$ be a cycle in $G$ with $|V(C)|<|V(G)|=n$. If $\delta(G)\ge 3$, then 
$$|E(G)|\ge n+2d_0+\frac{3}{2}d_1+d_2+\frac{1}{2}d_{\ge 3}\mbox{,}$$
where $d_i=|D_{i}(G-V(C))|$ for every $i\ge 0$ and $d_{\ge 3}=\sum_{j\ge 3}d_j$.
Moreover, the equality holds if and only if $d_i=0$ for $i>3$ and $|N_{G}(v)\cap V(C)|=3-i$ for any $v\in D_{i}(G-V(C))$ and $i\in[1,3]$.
\end{lem}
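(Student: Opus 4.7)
The plan is to handle the three claims in sequence, since each is essentially independent but uses the saturation axiom in a different flavour.

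For (i), connectedness is immediate from Fact~\ref{f1}. Since any $2\mathcal{C}_{\ge 3}$-saturated graph must contain at least one cycle, it has at least one non-trivial block. I would then argue by contradiction: suppose there are non-trivial blocks $B_1,\dots,B_s$ with $s\ge 2$. Any two must share a vertex, otherwise they already exhibit two vertex-disjoint cycles in $G$. Since blocks of $G$ share at most one (cut) vertex and the block-cut graph is a tree, the standard argument on three pairwise-intersecting blocks forces all $B_i$ to share a single common cut vertex $v$. Now no $B_i$ can contain a cycle avoiding $v$, for such a cycle together with any cycle in $B_j$ ($j\neq i$, which meets $B_i$ only in $v$) would be vertex-disjoint. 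Hence every cycle of $G$ passes through $v$. Finally, picking $x\in V(B_1)\setminus\{v\}$ and $y\in V(B_2)\setminus\{v\}$ (these exist because each non-trivial block has $\ge 3$ vertices), $xy\notin E(G)$, and any $xy$-path in $G$ must pass through $v$, so the cycle using $xy$ in $G+xy$ contains $v$, leaving no room for a second disjoint cycle of $G$ (every such cycle needs $v$), contradicting saturation.

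For (ii), Proposition~\ref{mg}(iii) disposes of the case $\delta(G)=2$: any vertex $w$ of degree $2$ has its two neighbours joined by an edge, producing a triangle. So I may assume $\delta(G)\ge 3$ and derive a contradiction from the assumption that the girth $g$ of $G$ is at least $5$. Let $C$ be a shortest cycle. If $V(G)=V(C)$ then $C$ has no chord and $G=C$ has minimum degree $2$, a contradiction. Otherwise the induced subgraph $F:=G-V(C)$ is a nonempty forest by Fact~\ref{f2}. For any $u\in V(F)$, if $u$ has two neighbours on $C$, then one of the two arcs of $C$ between them, together with the length-two detour through $u$, gives a cycle of length at most $\lfloor g/2\rfloor+2<g$, contradicting the minimality of $|V(C)|$. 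Therefore $|N_G(u)\cap V(C)|\le 1$, which combined with $d_G(u)\ge 3$ gives $d_F(u)\ge 2$, so $\delta(F)\ge 2$. But a nonempty forest always has a vertex of degree at most $1$, the desired contradiction.

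For (iii), the proof is a direct edge count. Write $F=G-V(C)$ and split $E(G)$ into edges inside $V(C)$, edges inside $V(F)$, and edges across. The first contributes at least $|V(C)|$ since $E(C)\subseteq E(G[V(C)])$. From $2|E(F)|=\sum_{v\in V(F)}d_F(v)=d_1+2d_2+\sum_{j\ge 3}j\,d_j\ge d_1+2d_2+3d_{\ge 3}$, the second contributes at least $(d_1+2d_2+3d_{\ge 3})/2$. Finally $|N_G(v)\cap V(C)|=d_G(v)-d_F(v)\ge 3-d_F(v)$, so the cross edges total at least $3d_0+2d_1+d_2$. Summing and substituting $n=|V(C)|+d_0+d_1+d_2+d_{\ge 3}$ yields exactly $n+2d_0+\tfrac{3}{2}d_1+d_2+\tfrac{1}{2}d_{\ge 3}$. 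Equality forces $G[V(C)]=C$ (no chords), $d_j=0$ for all $j\ge 4$ (so $\sum j\,d_j=3d_{\ge 3}$), and $|N_G(v)\cap V(C)|=\max(0,3-d_F(v))$ for each $v\in V(F)$, which gives the stated characterisation. The main obstacle is part (i): converting "two non-trivial blocks" into a concrete non-edge that witnesses failure of saturation requires both the block-cut tree reduction to a common cut vertex $v$ and the observation that no cycle of $G$ may avoid $v$; parts (ii) and (iii) are comparatively routine once $\delta(G)\ge 3$ has been isolated.
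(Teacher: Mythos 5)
Your proof is correct and follows essentially the same route as the paper's: part (i) via a shared cut vertex through which every cycle must pass, contradicting Fact~\ref{f1}; part (ii) via Proposition~\ref{mg}(iii) when $\delta=2$ and the shortest-cycle/forest detour argument when $\delta\ge 3$; part (iii) via the same three-way edge count. Your equality analysis in (iii) is in fact slightly more complete than the paper's (you also record the chordlessness of $C$ and the condition on $D_0$), and the minor appeals to the block-cut tree in (i) are at the same level of detail as the original.
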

\begin{proof}
By Fact~\ref{f1}, $G$ is connected and $G$ contains a cycle. So $G$ contains at least one non-trivial block. 
For (i), suppose the contrary that $G$ contains at least two non-trivial blocks $B_1$ and $B_2$. If $V(B_1)\cap V(B_2)=\emptyset$. Then we can pick two disjoint cycles $C^1$ and $C^2$ in $G$ with $C^i\subset B_i$ ($i=1,2$), which is a contradiction. Thus, $B_1$ and $B_2$ share a cut vertex, say $x$. 
We claim that any cycle $C$ in $G$ must have $x\in V(C)$. Otherwise, suppose there is a cycle $C^0$ with $x\notin V(C^0)$. Note that $V(C^0)$ is disjoint with at least one of $V(B_1)$ and $V(B_2)$, or $B_1$ and $B_2$ would be contained in a larger block. Without loss of generality, suppose $V(C^0)\cap V(B_1)=\emptyset$. We can pick a cycle $C^1\subset B_1$, which is disjoint with $C^0$, a contradiction. Hence, it holds that $x\in V(C)$ for any cycle $C$ in $G$. 
Pick $v_i\in V(B_i)\backslash\{x\}$ for $i=1,2$, then $v_1v_2\notin E(G)$. Since $x$ is the cut vertex of $B_1$ and $B_2$, any path $P$ from $v_1$ to $v_2$ must have $x\in V(P)$. However, any cycle $C$ in $G$ also has $x\in V(C)$. This is a contradiction by Fact~\ref{f1}.\\
Now we give the proof of (ii) and suppose $G$ is good. 
By Proposition~\ref{mg} (iii), if there exists a vertex $v\in D_2(G)$, then $u_1u_2\in E(G)$ for the two neighbors $u_1$ and $u_2$ of $v$. Then $G[\{u_1,u_2,v\}]=vu_1u_2v\cong C_3$, and we are done. So we may suppose $D_2(G)=\emptyset$ and then $\delta(G)\ge3$. Let $C$ be the shortest cycle in $G$. Since $C$ is a hole (i.e. an induced cycle), if $|V(C)|=|V(G)|$, then $G=C$, which is clearly not a $2\mathcal{C}_{\ge 3}$-saturated graph. Hence, by Fact~\ref{f2}, $G-V(C)$ is a forest and so we can pick a vertex $v\in D_1(G-V(C))\cup D_0(G-V(C))$. Then $|N_{G}(v)\cap V(C)|\ge \delta(G)-1\ge 2$. Pick $\{u_1,u_2\}\subseteq N_{G}(v)\cap V(C)$. Then $vu_1\cup P_{C}(u_1,u_2)\cup u_2v$ is a cycle of size at most $\lfloor|E(C)|\rfloor+2$ (recall that $P_{C}(u_1,u_2)$ is the shortest path from $u_1$ to $u_2$ in $C$). By the minimality of $C$, we have $|E(C)|\le \lfloor|E(C)|\rfloor+2$, which implies $|E(C)|\le 4$.\\
For (iii), suppose $C\cong C_r$ for some $r\ge 3$. By Handshaking Lemma, $|E(G-V(C))|=\frac{1}{2}\sum_{i\ge 0}id_i\ge\frac{1}{2}d_1+d_2+\frac{3}{2}d_{\ge 3}$. Also, for any $i\in[0,2]$ and vertex $v\in D_i(G-V(C))$, since $d_{G}(v)\ge 3$, it holds $|N_{G}(v)\cap V(C)|\ge 3-i$. Thus, 
$$|E(G[V(C),V(G)\backslash V(C)])|=\sum_{v\in V(G)\backslash V(C)}|N_{G}(v)\cap V(C)|\ge 3d_0+2d_1+d_2\mbox{.}$$
Therefore, it holds
\begin{eqnarray*}
|E(G)|&=&|E(C)|+|E(G[V(C),V(G)\backslash V(C)])|+|E(G-V(C))|\\
&\ge & r+3d_0+\frac{5}{2}d_1+2d_2+\frac{3}{2}d_3\\
&=& n+2d_0+\frac{3}{2}d_1+d_2+\frac{1}{2}d_{\ge 3}\mbox{,}
\end{eqnarray*}
since $r+d_0+d_1+d_2+d_{\ge 3}=n$. It is easy to check the condition when equality holds.
\end{proof}
Now we give the proof of Theorem~\ref{sat} when $n=6$ and show that a $6$-vertex $2\mathcal{C}_{\ge 3}$-saturated graph with $10$ edges must be a copy of $W_6$.
\begin{lem}\label{n6}
$\sat(6,2\mathcal{C}_{\ge 3})=10$. Moreover, the only $6$-vertex $2\mathcal{C}_{\ge 3}$-saturated graph with $10$ edges is $W_6$.
\end{lem}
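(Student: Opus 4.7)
The plan is to split the proof into an upper bound and a lower-bound-with-uniqueness. The upper bound $\sat(6,2\mathcal{C}_{\ge 3})\le 10$ is immediate from Observation~\ref{block}(ii), which gives $W_6$ as a $2\mathcal{C}_{\ge 3}$-saturated graph with $|E(W_6)|=10$. For the rest, let $G$ be any $2\mathcal{C}_{\ge 3}$-saturated graph on $6$ vertices. First I would reduce to the $2$-connected case: by Lemma~\ref{pre}(i) $G$ has exactly one non-trivial block, so Proposition~\ref{mg}(iv) gives two sub-cases. If $M(G)\cong K_5$, then Proposition~\ref{mg}(i) yields $|E(G)|-6=|E(K_5)|-5=5$ and hence $|E(G)|=11$, finishing this case. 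Otherwise $M(G)$ itself is $2\mathcal{C}_{\ge 3}$-saturated, which forces $|V(M(G))|\ge 6$; combined with $|V(M(G))|\le|V(G)|=6$ this gives $G=M(G)$, so $G$ is $2$-connected and good.

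The next step is to extract two structural properties of such a $G$. To show $G$ contains a triangle, Lemma~\ref{pre}(ii) supplies a cycle of length at most $4$; if $G$ were triangle-free, then for every non-edge $uv$ the two disjoint triangles in $G+uv$ must include one triangle using $uv$ and a second triangle entirely inside $G$, contradicting triangle-freeness, so $G$ would have no non-edges and $G\cong K_6$, absurd. To show $\delta(G)\ge 3$, suppose some $v$ has $d_G(v)=2$ with neighbors $u_1,u_2$; by Proposition~\ref{mg}(iii), $u_1u_2\in E(G)$, so $\{v,u_1,u_2\}$ spans a triangle. Writing $\{a,b,c\}=V(G)\setminus\{v,u_1,u_2\}$, the saturation condition on each non-edge $vw$ with $w\in\{a,b,c\}$ demands a common neighbor $x\in\{u_1,u_2\}$ of $v$ and $w$ together with a partner triangle on $(\{u_1,u_2\}\setminus\{x\})\cup(\{a,b,c\}\setminus\{w\})$. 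Enumerating the six possible attachment patterns of $\{a,b,c\}$ to $\{u_1,u_2\}$ (up to the $u_1\leftrightarrow u_2$ symmetry), I would check that each either leaves some non-edge $vw$ with no valid partner triangle, or forces $\{a,b,c\}$ itself into a triangle and produces the forbidden disjoint triangles $\{v,u_1,u_2\}$ and $\{a,b,c\}$ in $G$.

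With a triangle $T\subset G$ and $\delta(G)\ge 3$ in hand, I would apply Lemma~\ref{pre}(iii) with $C=T$ and $Y=V(G)\setminus T$. Because $G$ omits $2C_3$, the induced subgraph $G[Y]$ on three vertices is not a triangle, so $|E(G[Y])|\in\{0,1,2\}$; plugging the resulting $(d_0,d_1,d_2)$ into the bound gives $|E(G)|\ge 12,11,10$ respectively, which establishes $\sat(6,2\mathcal{C}_{\ge 3})\ge 10$.

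The main obstacle is the uniqueness claim, which requires pinning down the equality case. Equality can only occur when $G[Y]$ is a path $y_1y_2y_3$, and the equality clause of Lemma~\ref{pre}(iii) forces every $y_i$ to have degree exactly $3$ in $G$, with $|N(y_2)\cap T|=1$ and $|N(y_1)\cap T|=|N(y_3)\cap T|=2$, so the total number of $T$-to-$Y$ edges is $5$. Because $\delta(G)\ge 3$ and each vertex of $T$ already has two neighbors inside $T$, each vertex of $T$ must have at least one $Y$-neighbor, and so the multiset of $Y$-degrees along $T$ is either $(1,1,3)$ or $(1,2,2)$. For $(1,2,2)$, case-checking the choice of $y_2$'s unique $T$-neighbor shows that either the non-edge from the single-$Y$-neighbor vertex of $T$ to $y_2$ admits no partner triangle, or else $G$ already contains two vertex-disjoint triangles; both outcomes contradict saturation. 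For $(1,1,3)$, the unique $T$-vertex with three $Y$-neighbors has degree $5$ in $G$, and the only consistent assignment of the two remaining $T$-to-$Y$ edges makes the other five vertices span a $C_5$, yielding $G\cong W_6$ as required.
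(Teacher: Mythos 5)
Your proposal is correct, and its skeleton matches the paper's: reduce via $M(G)$ to a good $2$-connected graph (disposing of the $M(G)\cong K_5$ case by the edge count $11$), establish $\delta(G)\ge 3$, apply Lemma~\ref{pre}(iii) to a triangle, and classify the equality case. Two of your sub-arguments, however, genuinely differ from the paper's. For $\delta(G)\ge 3$ the paper uses the edge budget: assuming $|E(G)|\le 10$ it shows the two neighbors of a degree-$2$ vertex must be joined to all three remaining vertices, forcing $G$ into $S_{6,2}$ or $S_{6,2}$ plus one edge, neither of which is saturated. Your argument is edge-count-free: each non-edge $vw$ forces a partner triangle on the complementary three vertices, and running over $w\in\{a,b,c\}$ yields $ab,bc,ca\in E(G)$, hence two disjoint triangles -- a cleaner route that in fact needs no enumeration of attachment patterns at all. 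For the endgame the paper uses Fact~\ref{f1} to produce an induced path $p_1p_2p_3$ plus a complementary triangle and then analyzes $N_G(p_i)\cap V(C)$, whereas you start from an arbitrary triangle $T$ and sort by the multiset of $Y$-degrees on $T$; both analyses close in essentially the same way (the bad configurations either already contain $2C_3$ or leave a non-edge whose addition creates no second triangle, and the good one is a $C_5$ plus a dominating vertex).

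One small inaccuracy to fix when you write this out: in the $(1,2,2)$ case with $y_2$ attached to the $T$-vertex $t_1$ having a single $Y$-neighbor, the edge $t_1y_2$ is present, so ``the non-edge from the single-$Y$-neighbor vertex of $T$ to $y_2$'' does not exist. The configuration ($t_2,t_3$ each joined to both $y_1$ and $y_3$) is ruled out instead by the non-edge $t_2y_2$: every triangle through $t_2y_2$ in $G+t_2y_2$ uses one of $t_1,y_1,y_3$, and none of the three complementary triples spans a triangle. The other sub-case ($y_2$ attached to a double-$Y$-neighbor vertex) does produce two disjoint triangles in $G$ as you claim, so the dichotomy survives with the failing non-edge correctly identified.
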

\begin{proof}
Consider a $6$-vertex $2\mathcal{C}_{\ge 3}$-saturated graph $G$ with $|E(G)|\le 10$. By Proposition~\ref{mg} (iv), $M(G)\cong K_5$ or $M(G)$ is also $2\mathcal{C}_{\ge 3}$-saturated.
If $M(G)\cong K_5$, then by Proposition~\ref{mg} (i), $|E(G)|=10+6-5=11$, a contradiction. So $M(G)$ is $2\mathcal{C}_{\ge 3}$-saturated. Hence $|V(M(G))|\ge 2\times 3=6$, which implies $G=M(G)$. By Proposition~\ref{mg} (ii) and Lemma~\ref{pre} (i), $G$ is good.\\
If $D_2(G)\neq \emptyset$, then by Proposition~\ref{mg} (iii), we can pick a cycle $C\cong C_3$ in $G$ containing a vertex of degree $2$. Suppose $C=vu_1u_2v$ and $d_{G}(v)=2$. Let $A=V(G)\backslash V(C)$.
Claim that $G[\{u_1,u_2\},A]=K[\{u_1,u_2\},A]$. Otherwise, say $u_1a\notin E(G)$ with $a\in A$. By Fact~\ref{f1} and $|V(G)|=6$, we can find a path $P=u_1ba$ for some $b\in V(G)$ and a cycle $C'\cong C_3$ with $V(C')=V(G)\backslash V(P)$. If $b\in V(C)$, then $b=u_2$ since $b\neq u_1$ and $va\notin E(G)$. Then $v\in V(C')$, which cannot holds since $N_{G}(v)=\{u_1,u_2\}\subset V(P)$. If $b\in A$, similarly, $v\in V(C')$, a contradiction, too. Thus, the claim holds.
Note that $|E(G[A])|=|E(G)|-|E(C)|-|E(G[V(C),A])|\le 10-3-2\times 3=1$. If $|E(G[A])|=0$, then $G\cong S_{6,2}$; if $|E(G[A])|=1$, $G$ can be obtained by adding an edge in $S_{6,2}$. In both cases, one can check that $G$ is not $2\mathcal{C}_{\ge 3}$-saturated.\\
Therefore, $D_2(G)=\emptyset$ and $\delta(G)\ge 3$. Since $G$ is not complete and $|V(G)|=6$, by Fact~\ref{f1}, there is a path $P=p_1p_2p_3$ and a cycle $C=c_1c_2c_3c_1$ with $V(C)=V(G)\backslash V(P)$ and $G[V(P)]=P$. By Lemma~\ref{pre} (iii), $10\ge |E(G)|\ge6+\frac{3}{2}\times 2+1=10$, which means the equality holds. So $|E(G)|=10$ and $|N_{G}(p_i)\cap V(C)|=2$ for $i=1,3$ and $|N_{G}(p_2)\cap V(C)|=1$.
Without loss of generality, suppose $N_{G}(p_2)\cap V(C)=\{c_2\}$.\\
If $N_{G}(p_1)\cap V(C)=\{c_1,c_3\}$, then $c_2\notin N_{G}(p_3)\cap V(C)$. Otherwise, $p_1c_1c_3p_1$ and $p_3c_2p_2p_3$ are two disjoint cycles in $G$. Hence $N_{G}(p_3)\cap V(C)=\{c_1,c_3\}$. In this case, one can check that $G+p_2c_1+p_2c_3\cong S_{6,3}$. By Observation~\ref{block} (i), $G$ is not $2\mathcal{C}_{\ge 3}$-saturated.\\
Therefore, without loss of generality, we can suppose $N_{G}(p_1)\cap V(C)=\{c_1,c_2\}$. Similarly,  $N_{G}(p_3)\cap V(C)=\{c_j,c_2\}$ for some $j\in\{1,3\}$. If $j=1$, $d_G(c_3)=2$, a contradiction by $\delta(G)\ge 3$. Thus, $j=3$ and we can see that $G\cong W_6$. Note that $W_6$ is $2\mathcal{C}_{\ge 3}$-saturated and $|E(W_6)|=10$ by Observation~\ref{block} (ii).\\
To conclude, the only $6$-vertex $2\mathcal{C}_{\ge 3}$-saturated graph $G$ with $|E(G)|\le 10$ is $W_6$ and $|E(W_6)|=10$. This completes the proof.
\end{proof}
The following lemma state a property of $W_n$.
\begin{lem}\label{wn}
Let $n\ge 6$. If $H$ is a $2\mathcal{C}_{\ge 3}$-saturated graph with $M(H)\cong W_n$, then $H\cong W_n$.
\end{lem}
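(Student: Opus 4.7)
The plan is to exclude the two possible ways $H$ could fail to equal $W_n$: a proper subdivision of $W_n$ inside the non-trivial block of $H$, and pendant trees attached to that block. Write $W_n=K_1\vee C_{n-1}$ with center $c$ and rim $v_1v_2\cdots v_{n-1}v_1$. By Lemma~\ref{pre}(i), $H$ has a unique non-trivial block $B$; since $B$ is $2$-connected, every vertex of $B$ has $B$-degree at least $2$ and hence survives the iterated leaf-deletion defining $M_0(H)$, giving $M_0(H)=B\in\mathcal{S}(W_n)$. The key structural fact I will exploit is that every cycle of $H$ lies in $B$, cycles of $B$ are subdivisions of cycles of $W_n$, and the only cycle of $W_n$ avoiding $c$ is the rim $C_{n-1}$; so the only cycle of $H$ avoiding $c$ is the subdivision $C^*$ of $C_{n-1}$ inside $B$.

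Step~1 eliminates subdivisions. Suppose some edge of $W_n$ is subdivided in $B$. If a spoke $cv_i$ is subdivided, then $cv_i\notin E(H)$, and every cycle $C_1$ of $H+cv_i$ using the new edge satisfies $V(C_1)\supseteq\{c,v_i\}$; a disjoint cycle $C_2$ would then have to avoid $c$, but the only candidate $C^*$ contains $v_i$, contradicting saturation. If instead a rim edge $v_iv_{i+1}$ is subdivided with an internal vertex $w$, then $cw\notin E(H)$, and any cycle through $cw$ must leave $w$ along the subdivided rim edge (a detour into a pendant tree attached at $w$ cannot return to $c$ without revisiting $w$). Tracing along the subdivided edge, the cycle exits at $v_i$ or $v_{i+1}$, so $V(C_1)\ni c$ and $V(C_1)\cap\{v_i,v_{i+1}\}\ne\emptyset$; any disjoint $C_2$ would have to avoid $c$ and one of $v_i,v_{i+1}$, yet $C^*$ contains both. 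Hence $B=W_n$.

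Step~2 eliminates pendant trees. If $H\ne B$ there is a leaf $v$ of $H$ lying in some pendant tree attached to a unique $x\in V(W_n)$. If $x=c$, pick any rim vertex $v_i$ and add the non-edge $vv_i$; the unique path from $v$ out of its tree in $H$ goes up to $c$, so every cycle $C_1$ using $vv_i$ satisfies $V(C_1)\supseteq\{v,c,v_i\}$. A disjoint $C_2$ would lie in $H-\{v,c,v_i\}$, which consists of $W_n-\{c,v_i\}$ together with the remaining pendant trees and is therefore a forest, contradicting saturation. If $x=v_j$, add the non-edge $vc$; every cycle using $vc$ must exit $v$'s tree at $v_j$, forcing $V(C_1)\supseteq\{v,c,v_j\}$, and $H-\{v,c,v_j\}$ is again a forest since $W_n-\{c,v_j\}$ is a path. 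Thus $H=W_n$.

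The main obstacle is the universality of the forcing: one cannot simply pick a convenient short cycle $C_1$ through the added edge and ignore longer alternatives; rather, one must show that \emph{every} cycle $C_1$ through the added edge must contain the same critical vertex set, namely the center together with a specific rim vertex. This works because the lowest-degree endpoint of the added edge, a leaf $v$ or subdivision vertex $w$, has essentially one route into the rest of $H$, funneled first along its pendant tree or the subdivided edge, then through a single rim vertex, and only then to $c$. A supporting observation is that pendant trees contribute no alternative routes: each is attached to $B$ at a single vertex, so a simple cycle cannot both enter and exit such a tree, and none of the non-edges I select between a leaf, center, or rim vertex can be accidentally present in $H$.
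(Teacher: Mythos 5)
Your proof is correct and follows essentially the same strategy as the paper: eliminate subdivision vertices and then pendant trees by adding, in each case, the very same strategic non-edge (center to the far end of a subdivided spoke, center to a rim subdivision vertex, pendant leaf to a rim vertex or to the center) and showing that no two disjoint cycles arise. The only cosmetic difference is that the paper verifies this last step by recognizing the non-trivial block of the augmented graph as $W_{n+1}$ or $W_n^{+1}$ and citing Observation~\ref{block}(ii), whereas you argue directly from the fact that the unique cycle of a subdivision of $W_n$ avoiding the center is the subdivided rim.
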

\begin{proof}
We firstly claim that $M_0(H)\cong W_n$. Notice that $M_0(H)\in\mathcal{S}(M(H))$.
Similarly as the proof of Lemma~\ref{sub}, we only need to prove that for any $uv\in E(G)$ with $G\cong W_n$, $G_0=S_{uv}(G)$ is not $2\mathcal{C}_{\ge 3}$-saturated. Let $X(G)=\{x\}$ and note that $G-X(G)\cong C_{n-1}$. Let $V(G_0)\backslash V(G)=\{w\}$. 
If $x\in \{u,v\}$, say $u=x$, then $xv\notin E(G_0)$ and $G_0+xv\cong W_n^{+1}$. By Observation~\ref{block} (ii), $G_0$ is not $2\mathcal{C}_{\ge 3}$-saturated.
If $x\notin\{u,v\}$, then $xw\notin E(G_0)$ and $G_0+xw\cong W_{n+1}$. By Observation~\ref{block} (ii), $G_0$ is not $2\mathcal{C}_{\ge 3}$-saturated.
Hence, the claim is done.\\
By Lemma~\ref{pre} (i) and the definition of $M_0(H)$, $M_0(H)$ is the only non-trivial block in $H$. Since $M_0(H)\cong W_n$, let $X(M_0(H))=\{x\}$ and pick a vertex $y\in V(M_0(H))\backslash\{x\}$.
If $H\not\cong W_n$, then there is a trivial block, say $B=b_1b_2$, with $V(B)\cap V(M_0(H))\neq\emptyset$. Without loss of generality suppose $b_1\in V(M_0(H))$. If $b_1=x$, then $b_2y\notin E(H)$. One can see that the only non-trivial block of $H+b_2y$ is isomorphic to $W_{n}^{+1}$. By Observation~\ref{block} (ii), a contradiction. If $b_1\neq x$, then $b_2x\notin E(H)$. One can also check that the only non-trivial block of $H+b_2x$ is isomorphic to $W_{n}^{+1}$, a contradiction, too.
\end{proof}
Here is an easy but useful lemma for the proof of Theorem~\ref{sat} and Theorem~\ref{es}.
\begin{lem}\label{x}
Let $G$ be a good $2\mathcal{C}_{\ge3}$-saturated graph on $n\ge 6$ vertices. 
If there exists a vertex $x\in V(G)$ with every cycle in $G$ containing it, then $x\in X(G)$ and $G-\{x\}$ is a tree. In particular, $|E(G)|=2n-3$. Moreover $n\ge 8$.
\end{lem}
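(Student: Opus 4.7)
The plan is to establish the four conclusions in sequence. Because $G$ is good it is $2$-connected, so $G - \{x\}$ is connected; combined with the hypothesis that every cycle of $G$ passes through $x$, this makes $T := G - \{x\}$ a connected acyclic graph, i.e.\ a tree on $n-1$ vertices. To show $x \in X(G)$ next, suppose toward a contradiction $xy \notin E(G)$ for some $y$. By saturation $G + xy$ contains two vertex-disjoint cycles $C^1, C^2$; the new edge lies in at most one of them, say $xy \notin E(C^2)$, so $C^2 \subseteq G$ and by hypothesis $x \in V(C^2)$. Disjointness then forces $x \notin V(C^1)$, whence $xy \notin E(C^1)$ and so $C^1 \subseteq G$---a cycle of $G$ missing $x$, contradicting the hypothesis. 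Thus $d_G(x) = n-1$, and $|E(G)| = (n-1) + |E(T)| = 2n - 3$.

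The main obstacle is the bound $n \ge 8$, which requires exploiting the tree structure of $T$. I would pick a diametral path $P = p_0 p_1 \cdots p_d$ in $T$; since $|V(T)| = n - 1 \ge 5$ we have $d \ge 2$, so $p_0 p_d \notin E(T) = E(G - \{x\})$ and hence $p_0 p_d \notin E(G)$. Apply the same cycle-splitting argument to the non-edge $p_0 p_d$: among the two disjoint cycles of $G + p_0 p_d$, exactly one, say $C^1$, uses the new edge, and removing $p_0 p_d$ from $C^1$ leaves a $p_0$-$p_d$ path in $T$, which by uniqueness of tree paths must be $P$; the other cycle $C^2 \subseteq G$ contains $x$ and is disjoint from $V(P)$, so $C^2 \subseteq G[\{x\} \cup (V(T) \backslash V(P))] = K_1 \vee (T - V(P))$. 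Since $T - V(P)$ is a forest, $K_1 \vee (T - V(P))$ contains a cycle if and only if $T - V(P)$ has at least one edge; so some edge $ab$ of $T$ has $a, b \notin V(P)$.

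The remaining distance analysis inside $T$ then finishes the argument. Let $b$ be the endpoint of $ab$ farther from $P$, so that the $T$-path from $b$ to $V(P)$ passes through $a$, has length $\ell + 1$ for some $\ell \ge 1$, and meets $P$ at some vertex $p_i$. Since $T$ has diameter $d$, the $T$-distances $(\ell + 1) + i$ and $(\ell + 1) + (d - i)$ from $b$ to $p_0$ and $p_d$ are both at most $d$, giving $\ell + 1 \le i$ and $\ell + 1 \le d - i$; adding these yields $d \ge 2(\ell + 1) \ge 4$. If $d = 4$ the inequalities are tight, forcing $\ell = 1$ and $i = 2$, so $T$ contains at least the five path-vertices together with $a$ and $b$, giving $|V(T)| \ge 7$. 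If $d \ge 5$ then the path alone contributes $d + 1 \ge 6$ vertices and $a, b$ are two more, giving $|V(T)| \ge 8$. In either case $n = |V(T)| + 1 \ge 8$.
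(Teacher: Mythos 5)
Your proof is correct. The first three conclusions ($T=G-\{x\}$ is a tree, $x\in X(G)$, $|E(G)|=2n-3$) follow the paper's route exactly, except that you unfold the saturation condition directly into two disjoint cycles rather than quoting Fact~\ref{f1}; the two-cycle splitting argument you give for $x\in X(G)$ is a valid rephrasing. The genuine divergence is in the bound $n\ge 8$. The paper treats $n=6$ and $n=7$ separately: $n=6$ is excluded by comparing $2n-3=9$ against $\sat(6,2\mathcal{C}_{\ge 3})=10$ from Lemma~\ref{n6}, and $n=7$ by showing that an edge of $T-V(P)$, for $P$ a longest path of $T$, could be spliced onto the longer arm of $P$ to contradict maximality. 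You instead run one uniform computation: saturation at the non-edge $p_0p_d$ forces an edge $ab$ of $T$ outside $V(P)$, hanging off $P$ at some $p_i$ at depth $\ell+1$, and the diameter condition gives $\ell+1\le i$ and $\ell+1\le d-i$, hence $d\ge 4$ and $|V(T)|\ge 7$ in all cases. This is essentially the same inequality as the paper's $n=7$ argument (a hanging branch cannot be deeper than the shorter arm of a longest path), but your version subsumes the $n=6$ case as well and thereby removes the dependence on Lemma~\ref{n6}, which is a small but real simplification. The only steps you leave implicit are standard tree facts (that $a$ and $b$ attach to $P$ at a common vertex $p_i$ with one of them on the $T$-path from the other to $p_i$, and that all $T$-paths from $b$ to $V(P)$ pass through $p_i$); these are routine and do not constitute gaps.
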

\begin{proof}
If there exists $y\in V(G)\backslash\{x\}$ with $xy\notin E(G)$, then by Fact~\ref{f1}, there exist a path $P$ from $x$ to $y$ and a cycle $C$ in $G$ with $V(P)\cap V(C)=\emptyset$. However, $x\in V(P)\cap V(C)$ since every cycle in $G$ contains $x$, which is a contradiction. This makes $x\in X(G)$.\\
Since every cycle in $G$ contains $x$, $T=G-\{x\}$ is a forest. If $T$ contains at least $2$ components, then $x$ is a cut vertex of $G$, which is a contradiction since $G$ is $2$-connected.
Hence $T$ is a tree and $|E(G)|=d_{G}(x)+|E(T)|=(n-1)+(n-2)=2n-3$.\\
Now if $n=6$, then $|E(G)|=9<\sat(6,2\mathcal{C}_\ge 3)$ by Lemma~\ref{n6}, a contradiction.
If $n=7$, let $P$ be a path within $T$ of a maximum size. Let the two endpoints of $P$ be $x$ and $y$. Note that $d_T(x)=d_{T}(y)=1$ and $xy\notin E(G)$. If $T-V(P)$ contains an edge $e=uv$, then $|E(P)|\le |V(T)|-2-1\le 3$ and there is a path $P'$ form $e$ to $V(P)$ since $T$ is connected. Suppose the endpoint of $P'$ in $V(P)$ is $w$, then by pigeonhole principle, one of $P_P(w,x)$ and $P_P(w,y)$, say $P_P(w,x)$ has size at least $\lceil\frac{|E(P)|}{2}\rceil$. Then the path $P_P(w,x)\cup P'\cup \{e\}$ contains at least $\lceil\frac{|E(P)|}{2}\rceil+2>|E(P)|$ edges, which is a contradiction by the maximality of $P$. Therefore, $n\ge 8$.
\end{proof}
To make the proof of Theorem~\ref{sat} brief, we put the following lemma here.
\begin{lem}\label{delete}
Let $G$ be a good $2\mathcal{C}_{\ge3}$-saturated graph with on $n$ vertices with $n\ge 7$ and $|E(G)|\le n+4$. For any vertex $v\in V(G)\backslash D_2(G))$, it holds $|N_{M(G)}(v)\backslash D_2(G))|\ge 3$.
\end{lem}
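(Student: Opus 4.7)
The plan is to argue by contradiction: assume some $v\in V(G)\setminus D_2(G)$ has $|N_G(v)\setminus D_2(G)|\le 2$ (recall $M(G)=G$ since $G$ is good). Since $\delta(G)\ge 2$ by Proposition~\ref{mg}(iii), $d_G(v)\ge 3$, so $v$ has at least $d_G(v)-2\ge 1$ neighbors of degree $2$. Fix such a neighbor $u$ with other neighbor $w$; Proposition~\ref{mg}(iii) forces $vw\in E(G)$, so $T:=vuw$ is a triangle. Because $u$ has degree $2$, every cycle through $u$ contains $v$ and $w$, and because $G$ contains no two disjoint cycles, every cycle avoiding $u$ must meet $T$; hence every cycle of $G$ contains $v$ or $w$, and $G-\{v,w\}$ is a forest. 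If every cycle contained $v$, Lemma~\ref{x} would give $|E(G)|=2n-3$ with $n\ge 8$, so $|E(G)|\ge 13>n+4$, a contradiction; symmetrically for $w$. So some cycle avoids each of $v$ and $w$.

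Next I read off the neighborhood structure at $v$. Let $N^*=N_G(v)\setminus D_2(G)$ (so $|N^*|\le 2$) and $N^{**}=N_G(v)\cap D_2(G)$. Proposition~\ref{mg}(iii) assigns each $u'\in N^{**}$ a partner $w_{u'}\in N_G(v)$. If two members of $N^{**}$ were paired to each other, the resulting edge would be an isolated component of $G-v$ (using $n\ge 7$), contradicting the 2-connectivity of the good graph $G$. Hence every partner lies in $N^*$, forcing $|N^*|\in\{1,2\}$. When $|N^*|=1$, say $N^*=\{x\}$, every pendant partners with $x$, producing a ``multi-theta'': $v$ and $x$ are joined by the edge $vx$ and by $|N^{**}|$ parallel length-two paths through the pendants. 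The condition that $G-x$ be connected (from 2-connectivity) forces $V(G)=N_G[v]$, so $|E(G)|=2n-3$; combined with $|E(G)|\le n+4$ this yields $n=7$, and in the unique resulting graph, adding any non-edge between two pendants fails to create two disjoint cycles (every new cycle still contains $v$ or $x$, and the untouched pendants are pairwise non-adjacent), contradicting saturation.

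When $|N^*|=2$ the argument is subtler: writing $N^*=\{x_1,x_2\}$, letting $a_i$ count pendants partnered with $x_i$, and setting $R=V(G)\setminus N_G[v]$, one first shows that $G[R]$ is a forest (else a cycle in $G[R]$ would be disjoint from a triangle at $v$) and, more generally, that $G[\{x_i\}\cup R]$ is a forest whenever $a_{3-i}\ge 1$. One then invokes the saturation property on well-chosen non-edges (between $v$ and a vertex of $R$, between $x_1$ and $x_2$ when they are non-adjacent, and between two pendants living at different hubs) to successively force $R=\emptyset$ and restrict $(a_1,a_2)$; the edge count is then $2n-4+[x_1x_2\in E(G)]$, which together with $|E(G)|\le n+4$ leaves only $n\in\{7,8\}$, and the surviving candidates are eliminated by explicit non-edges whose addition fails to create two disjoint cycles. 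The main obstacle is exactly this final case analysis: the guiding principle is that every cycle of $G$ contains $v$ or $w$, so two disjoint cycles in any $G+e$ must split the roles of $v$ and $w$ between them, and the rigidity imposed by $|E(G)|\le n+4$, together with the forest structure on $R$ and the absence of pendant-to-$R$ edges, makes such a split structurally impossible.
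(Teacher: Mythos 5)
Your setup is sound and matches the paper's: you correctly reduce to a vertex $v$ with a degree-$2$ neighbour $u$ whose partner $w$ satisfies $vw\in E(G)$ (Proposition~\ref{mg}(iii)), deduce that every cycle of $G$ meets $\{v,w\}$, and observe that each degree-$2$ neighbour of $v$ must be partnered with a vertex of $N_G(v)\setminus D_2(G)$, so that $1\le|N^*|\le 2$. Your treatment of the case $|N^*|=1$ is complete and correct, and is a legitimate alternative to the paper's route: you pin down $V(G)=N_G[v]$ from $2$-connectivity and kill the resulting $S_{7,2}$ directly, where the paper instead shows every cycle passes through the single hub and quotes Lemma~\ref{x}.

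The case $|N^*|=2$, however, is a genuine gap, and it is the heart of the lemma. You only outline a plan ("one first shows\dots", "one then invokes\dots") and concede that "the main obstacle is exactly this final case analysis." The concrete problem is that your plan to force $R=V(G)\setminus N_G[v]=\emptyset$ does not cover the sub-case in which all degree-$2$ neighbours of $v$ are partnered with the same hub, say $x_1=w$. Your auxiliary fact that $G[\{x_i\}\cup R]$ is a forest requires $a_{3-i}\ge 1$, so when $a_2=0$ the cycle $C$ produced by Fact~\ref{f1} from the non-edge $vr$ ($r\in R$) may legitimately live inside $G[\{x_1\}\cup R]$, and none of your listed non-edges rules this out; fan- and wheel-like graphs with $R\neq\emptyset$ are exactly the configurations that survive your tools here. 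The paper disposes of this case by a different chain: it first shows $u_1u_2\in E(G)$ (applying Fact~\ref{f1} to the non-edge $u_1u_2$ and using that every cycle meets $\{u_1,u_2\}$), then analyses the non-edge $uu_2$ to show that in fact every cycle contains $u_1$, and finally invokes Lemma~\ref{x} to get $|E(G)|=2n-3\ge n+5$ with $n\ge 8$. Some such argument that converts "every cycle meets $\{v,w\}$" into "every cycle contains a single fixed vertex" (or an equally effective substitute) is missing from your proposal, and without it the claimed edge count $2n-4+[x_1x_2\in E(G)]$ and the reduction to $n\in\{7,8\}$ are unsupported.
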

\begin{proof}
We prove the lemma by contradiction.
If $N_{G}(v)\cap D_2(G)=\emptyset$, then the claim is done since $d_{G}(v)\ge 3$.
So we suppose $u\in N_{G}(v)\cap D_2(G)$. Let $N_{G}(u)=\{v,u_1\}$. Since $n\ge 7>3$, if $d_{G}(u_1)=2$, then $v$ is a cut vertex, which is a contradiction since $G$ is $2$-connected. 
Hence, $u_1\in N_{G}(v)\backslash D_2(G)$.\\
If $N_{G}(v)\backslash D_2(G)=\{u_1\}$, then by the previous proof, any $u'\in  N_{G}(v)\cap D_2(G)$ must have $N_{G}(u')=\{v,u_1\}$. Notice that every cycle in $G$ containing some $u'\in  N_{G}(v)\cap D_2(G)$ must contains $u_1$. So every cycle in $G$ containing $v$ must contain $u_1$ since every cycle containing one of its neighbor contains $u_1$.
Since $G$ is $2\mathcal{C}_{\ge 3}$-saturated, any cycle $C'$ in $G$ must contain one vertex in $\{u,v,u_1\}$ since $uvu_1u$ is a cycle. Thus, $C'$ must contain $u_1$ since if it contains $u$ or $v$, then it contians $u_1$ by the previous proof. Therefore, by Lemma~\ref{x}, $n\ge 8$ and $|E(G)|=2n-3\ge n+5$, a contradiction.\\
Now suppose $N_{G}(v)\backslash D_2(G)=\{u_1, u_2\}$ for some vertex $u_2\in V(G)\backslash D_2(G)$. Similarly as the previous proof, since $vu_1uv$ is a cycle, every cycle in $G$ contains one of $v$ and $u_1$ . Also note that every cycle containing $v$ must contains one of $u_1$ and $u_2$. Otherwise, $v$ is a cut vetex, a contradiction. So every cycle in $G$ contains one of $u_1$ and $u_2$. If $u_1u_2\notin E(G)$, then by Fact~\ref{f1}, there is a path $P$ from $u_1$ to $u_2$ and a cycle $C$ in $G$ with $V(C)\cap V(P)=\emptyset$. However, $C$ contains one of $u_1$ and $u_2$, a contradiction. Thus, $u_1u_2\in E(G)$.\\
If there exists a cycle $C$ that not contains $u_1$, then by the previous proof, $v, u_2\in V(C)$. Suppose $N_{C}(v)=\{u_2, w\}$, then $w\in D_2(G)$ since $N_{G}(v)\backslash D_2(G)=\{u_1, u_2\}$. Let $N_{G}(w)=\{v,u'\}$, then $u'v\in E(G)$ by Proposition~\ref{mg} (iii). Similarly as the previous proof, $u'\notin D_2(G)$. So $u'\in\{u_1,u_2\}$. Since $u_1\notin V(C)$, it holds $u'=u_2$. Now conisider $uu_2\notin E(G)$. By Fact~\ref{f1}, there exist a path $P$ from $u$ to $u_2$ and a cycle $C$ in $G$ with $V(C)\cap V(P)=\emptyset$. Since $N_{G}(u)=\{v,u_1\}$, one of $v$ and $u_1$ is in $V(P)$. If $u_1\in V(P)$, then $\{u_1,u_2\}\subset V(P)$. Recall that every cycle in $G$ contains one of $u_1$ and $u_2$, a contradiction. Thus, $v\in V(P)$, so there exists a cycle $C$ with $v, u_2\notin V(C)$. Also note that every cycle containing $w$ must contain $v$ and $u_2$, so $w\notin V(C)$. Therefore, $C$ and $vwu_2v$ are two disjoint cycles in $G$, a contradiction. Hence every cycle in $C$ contains $u_1$. By Lemma~\ref{x}, $n\ge 8$ and $|E(G)|=2n-3\ge n+5$, a contradiction.
\end{proof}

Now we give the proof of Theorem~\ref{sat}.
\begin{proof}[Proof of Theorem~\ref{sat}]
By Lemma~\ref{n6}, we only prove for $n\ge 7$.
By Theorem~\ref{cal} (i), for $n\ge 7$, $\sat(n,2\mathcal{C}_{\ge3})\le n+5$. Hence, we only need to prove that for $n\ge 7$ and any $n$-vertex $G$ with at most $n+4$ edges, $G$ is not $2\mathcal{C}_{\ge3}$-saturated.
Suppose the contrary that $G$ is an $n$-vertex $2\mathcal{C}_{\ge3}$-saturated graph with at most $n+4$ edges for some $n\ge 7$.
Let $n_0=|V(M(G))|$. By Proposition~\ref{mg} (i), $|E(M(G))|\le n_0+4$.
By Proposition~\ref{mg} (iv), $M(G)\cong K_5$ or $M(G)$ is a $2\mathcal{C}_{\ge3}$-saturated graph.
If $M(G)\cong K_5$, then $|E(M(G))|=10=n_0+5>n_0+4$, a contradiction. Thus, $M(G)$ is $2\mathcal{C}_{\ge3}$-saturated. Morever, if $n_0\le 6$, then $n_0=6$. By Lemma~\ref{n6} and Lemma~\ref{wn}, $G\cong M(G)\cong W_6$, a contradiction by $n\ge 7$. Thus, $n_0\ge 7$. To conclude, $M(G)$ is a $2\mathcal{C}_{\ge3}$-saturated graph on $n_0\ge 7$ vertices with at most $n_0+4$ edges. Also notice that $M(G)$ is good.\\
Suppose $\delta(M(G))\ge 3$. Let $C$ be a shortest cycle in $M(G)$. By Lemma~\ref{pre} (ii), $C\cong C_r$ for some $r\in\{3,4\}$. Let $T=M(G)-V(C)$, then $T$ is a forest by Fact~\ref{f2}. Also, $|V(T)|=n_0-r\ge 3$, and the equality holds if and only if $n_0=7$ and $r=4$. Let $d_i=|D_{i}(T)|$ for any $i\ge 0$ and let $d_{\ge 3}=\sum_{j\ge 3}d_j$.
By Lemma~\ref{pre} (iii), $|E(M(G))|\ge n_0+2d_0+\frac{3}{2}d_1+d_2+\frac{1}{2}d_{\ge 3}$. Since $|E(M(G))|\le n_0+4$, we can see that $d_0+d_1\le 2$. 
Note that every tree contains either a vertex of degree $0$ or at least two vertices of degree $1$. If there are at least two components in $T$, then since $d_0+d_1\le 2$, we have $d_0=2$, $d_1=0$ and $T$ must consist of two components isomorphic to $K_1$. This is a contradiction by $|V(T)|\ge 3$. Thus, $T$ itself is a tree. Since $d_0+d_1\le 2$, it holds $d_1=2$ and then $T$ is a path, This implies $d_2=|V(T)|-2\ge 1$.
Therefore, $n_0+4\ge |E(M(G))|\ge n_0+\frac{3}{2}d_1+d_2=n_0+3+d_1\ge n_0+4$. Hence the equality holds, which means $n_0=7$ and $r=4$. Since $T\cong P_3$ and $C\cong C_4$, suppose $T=p_1p_2p_3$ and $C=c_1c_2c_3c_4c_1$. By Lemma~\ref{pre} (iii), $|N_{M(G)}(p_i)\cap V(C)|=2$ for $i=1, 3$ and $|N_{M(G)}(p_2)\cap V(C)|=1$. Note that $C$ is the shortest cycle in $M(G)$, so there is no copy of $C_3$ in $M(G)$.  This means $N_{M(G)}(p_i)\cap V(C)$ must be either $\{c_1,c_3\}$ or $\{c_2,c_4\}$ for $i=1,3$. Without loss of generality, suppose $N_{M(G)}(p_1)\cap V(C)=\{c_1,c_3\}$. If $N_{M(G)}(p_3)\cap V(C)=\{c_1,c_3\}$, then one of $c_2$ and $c_4$ has degree $2$, a contradiction by $\delta(G)\ge 3$. Thus, $N_{M(G)}(p_3)\cap V(C)=\{c_2,c_4\}$. Now by symmetry, we can suppose $N_{M(G)}(p_2)\cap V(C)=\{c_1\}$ without loss of generality. However, this makes $p_2c_1p_1p_2$ a cycle of size $3$ in $M(G)$, a contradiction by the minimality of $C$. Thus, the proof for $\delta(M(G))\ge 3$ is done. Therefore, it holds $\delta(M(G))=2$. \\
Let $H=M(G)-D_2(M(G))$. By Lemma~\ref{delete}, $\delta(H)\ge 3$, which implies that $|E(H)|\ge \frac{3}{2}|V(H)|=\frac{3}{2}(n_0-t)$, where $t=|D_2(M(G))|$. Then we have
$$E(M(G))=|E(H)|+2|D_2(M(G))|\ge\frac{3}{2}n_0+\frac{1}{2}t\ge (n_0+\frac{7}{2})+\frac{1}{2}=n_0+4\mbox{.}$$
Thus, the equality holds. So $n_0=7$, $t=1$ and any vertex in $H$ has degree $3$. Let $u\in D_2(M(G))$ be the only vertex of degree $2$ in $M(G)$. Let $N_{M(G)}(u)=\{u_1,u_2\}$ and $A=V(M(G))\backslash\{u, u_1, u_2\}$. Since $d_{M(G)}(u)=2$, by Proposition~\ref{mg} (iii), $u_1u_2\in E(M(G))$, which means $uu_1u_2u$ is a cycle in $M(G)$. By Fact~\ref{f2}, $H[A]$ is a forest and then $|E(H[A])|\le(7-1-2)-1=3$. Also, since $d_{H}(u_1)=d_{H}(u_2)=3$ and $u_1u_2\in E(M(G))$, $|N_{H}(u_1)\cap A|=|N_{H}(u_2)\cap A|=2$, which implies $|E(H[\{u_1,u_2\},A])|=4$. Therefore,
$$|E(M(G))|=|E(H[A])|+|E(H[\{u_1,u_2\},A])|+1+2\le 10=n_0+3<n_0+4\mbox{,}$$
a contradiction.
\end{proof}

\section{Saturation spectrum of $2\mathcal{C}_{\ge3}$}
Note that $W_6$, $K_1\vee(K_4\cup K_1)$ and $S_{6,3}$ are $6$-vertex $2\mathcal{C}_{\ge3}$-saturated graph with the number of edges being $10$, $11$ and $12$ respectively. Since $\ES(6,2\mathcal{C}_{\ge3})\subseteq[\sat(n,2\mathcal{C}_{\ge3}),\ex(n,2\mathcal{C}_{\ge3})]=[10,12]$, (i) of Thoerem~\ref{es} follows. So we only need the proof of Thoerem~\ref{es} (ii).
By Theorem~\ref{cal} (ii) and (v), $[n+5,2n-2]\cup\{n+2t: t\in[3,n-3]\}\subseteq\ES(n, 2\mathcal{C}_{\ge3})$ for $n\ge 7$. Then by Theorem~\ref{ex} and Theorem~\ref{sat}, to prove Thoerem~\ref{es} (ii), it remains to show that: for any $n\ge 7$, an $n$-vertex $2\mathcal{C}_{\ge3}$-saturated graph with $m\ge 2n-1$ edges must have $m-n$ even.

\begin{lem}\label{d3}
If $G$ is an $n$-vertex good $2\mathcal{C}_{\ge3}$-saturated graph with $n\ge 7$ and $m=|E(G)|\ge 2n-1$, then $\delta(G)\ge 3$.
\end{lem}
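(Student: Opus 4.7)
Assume for contradiction that $\delta(G) = 2$ while $|E(G)| \ge 2n - 1$. Pick $v \in D_2(G)$ with $N_G(v) = \{u_1, u_2\}$; since $G$ is good, Proposition~\ref{mg}(iii) forces $u_1 u_2 \in E(G)$, so $vu_1u_2v$ is a triangle. Setting $A := V(G) \setminus \{v, u_1, u_2\}$, Fact~\ref{f2} (applied to this triangle) gives that $F := G[A]$ is a forest, and writing $a_i := |N_G(u_i) \cap A|$ the edge count decomposes as $|E(G)| = 3 + a_1 + a_2 + |E(F)|$. The target is to prove $a_1 + a_2 + |E(F)| \le 2n - 5$.

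A warm-up shows every cycle of $G$ meets $\{u_1, u_2\}$: a cycle avoiding both would sit in $G[A \cup \{v\}]$, where $v$ is isolated, hence in $F$, which is impossible. If some fixed $u_i$ lies in every cycle of $G$, Lemma~\ref{x} delivers $|E(G)| = 2n - 3 < 2n - 1$, a contradiction. Otherwise there exist cycles $C^{(1)}$ missing $u_1$ and $C^{(2)}$ missing $u_2$, and neither uses $v$, so each has the form $u_j \, b \, P_F(b, b') \, b' \, u_j$ with $\{b, b'\} \subseteq N_j := N_G(u_j) \cap A$ lying in one component of $F$. Because $G$ has no two disjoint cycles, a single component $T_0$ of $F$ must host both kinds of cycles, so $|N_i \cap V(T_0)| \ge 2$ for $i = 1, 2$, while every other component $T'$ of $F$ satisfies $|N_1 \cap V(T')| \le 1$ and $|N_2 \cap V(T')| \le 1$. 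Moreover, no edge of $T_0$ can separate a $2$-subset of $N_1$ from a $2$-subset of $N_2$; call this condition $(\star)$.

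The key numerical inequalities are $|N_1 \cap V(T')| + |N_2 \cap V(T')| \le 2$ for $T' \ne T_0$ (immediate) and $|N_1 \cap V(T_0)| + |N_2 \cap V(T_0)| \le |V(T_0)| + 2$. Together with $|E(F)| = (n - 3) - k$ (where $k$ is the number of components of $F$) and $|V(T_0)| + (k-1) \le n - 3$, they yield
\[
a_1 + a_2 + |E(F)| \;\le\; (|V(T_0)| + 2) + 2(k-1) + (n - 3 - k) \;\le\; 2n - 5,
\]
the desired contradiction. The second inequality is the main obstacle. My plan is to combine condition $(\star)$ with the saturation condition on the non-edges $\{va : a \in V(T_0)\}$: each $G + va$ contains two disjoint cycles, exactly one of which must use the new edge (otherwise both live in $G$), producing a path from $a$ through $F$ to some $a' \in N_i$ together with a disjoint cycle through $u_{3-i}$ in $G$.

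The argument splits in two according to the shape of $T_0$. When $T_0$ has diameter at least $3$, condition $(\star)$ already does the job: a path-separation argument on the Steiner subtrees of $N_1 \cap V(T_0)$ and $N_2 \cap V(T_0)$ in $T_0$ yields $|N_1 \cap V(T_0)| + |N_2 \cap V(T_0)| \le |V(T_0)| + 2$ (in extremal configurations, like $T_0$ a path with both endpoints repeated in $N_1$ and $N_2$, the bound is tight). When $T_0$ is a star, condition $(\star)$ is essentially vacuous, and one must instead use saturation applied to the non-edge $vc$, where $c$ is the center of $T_0$: since every two-vertex tree path in a star passes through $c$, the disjoint-cycle requirement for $G + vc$ becomes incompatible with the forest structure of $F$ outside $T_0$ and with $\delta(G) \ge 2$ on the peripheral leaves of $T_0$, ruling out the star subcase entirely. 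Handling this star case carefully is expected to be the most delicate part of the argument.
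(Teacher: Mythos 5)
Your reduction is clean and the arithmetic at the end is correct: with $v\in D_2(G)$, $N_G(v)=\{u_1,u_2\}$, $A=V(G)\setminus\{v,u_1,u_2\}$ and $F=G[A]$ a forest with $k$ components, the bound $|E(G)|\le 2n-2$ does follow once you know (a) $|N_1\cap V(T')|+|N_2\cap V(T')|\le 2$ for every component $T'\ne T_0$, and (b) $|N_1\cap V(T_0)|+|N_2\cap V(T_0)|\le |V(T_0)|+2$. Part (a) and the dichotomy via Lemma~\ref{x} are fine. The problem is that (b) --- which you yourself flag as ``the main obstacle'' --- is never proved, and it is precisely where the entire difficulty of the lemma lives. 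Your condition $(\star)$ (no $N_1$-path in $T_0$ disjoint from an $N_2$-path) genuinely cannot yield (b): take $T_0$ a star with center $c$ and leaves $\ell_1,\dots,\ell_p$, with every leaf adjacent to both $u_1$ and $u_2$. All tree paths between leaves pass through $c$, so $(\star)$ holds vacuously, yet $|N_1'|+|N_2'|=2p>|V(T_0)|+2=p+3$ for $p\ge 3$, and indeed this configuration has no two disjoint cycles and $3n-9\ge 2n-1$ edges. So (b) must be extracted from the full saturation hypothesis, not just from the absence of two disjoint cycles. You acknowledge this and propose to apply saturation to the non-edge $vc$, but what you give is a one-sentence plan (``becomes incompatible with the forest structure \dots ruling out the star subcase entirely''), not an argument; the intermediate cases between ``star'' and ``diameter at least $3$'' are not delimited, and even the diameter-$\ge 3$ case is only asserted via an unspecified ``path-separation argument on the Steiner subtrees.'' As it stands the proof has a genuine gap at its decisive step.

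For comparison, the paper avoids edge-counting in $F$ altogether. Writing $\mathcal{C}_G(b,c)$ for the cycles through $b$ avoiding $c$, it first shows both $\mathcal{C}_G(b,c)$ and $\mathcal{C}_G(c,b)$ have at least two members (else deleting one edge makes every cycle pass through a single vertex, forcing $|E(G)|\le 2n-2$), then proves via several intersection claims that all cycles in $\mathcal{C}_G(b,c)\cup\mathcal{C}_G(c,b)$ share a common vertex $p_t$, and finally applies Fact~\ref{f1} to the non-edge $ap_t$ to get a path--cycle pair that cannot both avoid $p_t$. If you want to salvage your route, you should expect to need a structural argument of comparable depth to establish (b); the saturation condition on the non-edges $va$, $a\in V(T_0)$, is the right tool, but it has to be deployed for a genuine case analysis on the shape of $T_0$ and the placement of $N_1',N_2'$, not just for the star.
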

\begin{proof}
Suppose the contrary that there exists a vertex $a\in D_2(G)$. Suppose $N_G(a)=\{b,c\}$. Since $G$ is $2\mathcal{C}_{\ge3}$-saturated and $C^0=abca$ is a cycle, the vertex set of every cycle in $G$ intersects $\{a,b,c\}$. Since every cycle containing $a$ must contains $b$ and $c$, every cycle in $G$ contains at least one of $b$ and $c$.
For any vertices $u,v\in V(G)$, let $\mathcal{C}_G(u,v)$ be the family of any cycle $C$ in $G$ with $u\in V(C)$ and $v\notin V(C)$. We have three claims for $\mathcal{C}_G(b,c)$ and $\mathcal{C}_G(c,b)$.
\begin{claim}\label{bc2}
$|\mathcal{C}_G(b,c)|\ge 2$ and $|\mathcal{C}_G(c,b)|\ge 2$.
\end{claim}
Otherwise, without loss of generality, suppose $|\mathcal{C}_G(b,c)|\le 1$. Let $G'=G$ if $|\mathcal{C}_G(b,c)|=0$ and $G'$ is obtained from $G$ by deleting one edge from the only cycle in $\mathcal{C}_G(b,c)$ if $|\mathcal{C}_G(b,c)|=1$. Since every cycle in $G$ contains at least one of $b$ and $c$, we see that every cycle in $G'$ contains $c$. Thus, $G'-\{c\}$ is a forest and then $|E(G)|\le |E(G')|+1=d_{G'}(c)+|E(G'-\{c\})|+1\le (n-1)+(n-1-1)+1=2n-2<2n-1$, a contradiction. The claim is done.
\begin{claim}\label{bcp}
Let $C\in\mathcal{C}_G(b,c)$ and $C'\in\mathcal{C}_G(c,b)$ be two cycles in $G$, then $C\cap C'$ is a path.
\end{claim}
To prove the claim, note that $C\cap C'\neq\emptyset$, or $C$ and $C'$ are disjoints cycles in $G$, which is a contradiction. Since $C\neq C'$, if $C\cap C'$ is not a path, then $C\cap C'$ is a union of at least two disjoint paths, which implies $|E(C\cap C')|\le |V(C\cap C')|-2$. Notice that $b,c\notin V(C\cap C')$, so $d_{C\cup C'}(b)=d_{C\cup C'}(b)=2$. Therefore, 
\begin{eqnarray*}
|E(C\cup C'-\{b,c\})|&=&|E(C\cup C')|-4=|E(C)|+|E(C')|-|E(C\cap C')|-4\\
&\ge&|V(C)|+|V(C')|-|V(C\cap C')|-2=|V(C\cup C')|-2\\
&=&|V(C\cup C'-\{b,c\})|>|V(C\cup C'-\{b,c\})|-1\mbox{.}
\end{eqnarray*}
This means there is a cycle in $C\cup C'-\{b,c\}\subset G$, which contains neither $b$ nor $c$, a contradiction. The claim is done.
\begin{claim}\label{bbp}
Let $C^1\in\mathcal{C}_G(b,c)$ and $C^2\in\mathcal{C}_G(c,b)$. For any $C\in \mathcal{C}_G(b,c)\cup \mathcal{C}_G(c,b)$ with $C\neq C^1, C^2$, $H=C\cap(C^1\cup C^2-\{b,c\})$ is a path.
\end{claim}
Without loss of generality, suppose $C\in\mathcal{C}_G(b,c)$. 
If $H=\emptyset$, then $C$ is disjoint with $C^2$, a contradiction.
Clearly, $H$ is a union of paths. If $H$ contains at least two components, then $|E(H)|\le |V(H)|-2$.
Let $T=C^1\cup C^2-\{b,c\}$, by Claim~\ref{bcp}, $|E(T)|=|V(T)|-1$. Thereofore, similarly as the proof of Claim~\ref{bcp}, we have
\begin{eqnarray*}
|E((C-\{b\})\cup T)|&=&|E(C-\{b\})|+|E(T)|-|E(H)|\\
&\ge&(|V(C-\{b\})|-1)+(|V(T)|-1)-(|V(H)|-2)\\
&=&|V((C-\{b\})\cup T)|\mbox{.}
\end{eqnarray*}
Thus, there is a cycle in $(C-\{b\})\cup T$,  which contains neither $b$ nor $c$, a contradiction. 
The claim is done.

By Claim~\ref{bc2} and Claim~\ref{bcp}, pick $C^b\in\mathcal{C}_G(b,c)$ and $C^c\in\mathcal{C}_G(c,b)$ with the path $P^0=C^b\cap C^c$ having the largest size. Let $P^0=p_1p_2\dots p_\ell$ for some $\ell\ge 1$. For convience, also let $C^b=bb_1^1b_2^1\dots b_{r_1}^1p_1\dots p_\ell b_{r_2}^2b_{r_2-1}^2\dots b_1^2b$ and $C^c=cc_1^1c_2^1\dots c_{s_1}^1p_1\dots p_\ell c_{s_2}^2c_{s_2-1}^2\dots c_1^2c$ for some $r_1$, $r_2$, $s_1$, $s_2\ge 1$.
Let $\mathcal{P}$ be the family of any path $P\not\subset C^0\cup C^b\cup C^c$ with $V(P)\cap V(C^0\cup C^b\cup C^c)=D_1(P)\neq\emptyset$, which is the set of the two endpoints of $P$.
Since $G$ is $2$-connected, if $\mathcal{P}=\emptyset$, then $G=C^0\cup C^b\cup C^c$, which is a contradiction by Claim~\ref{bc2}.
Let $\mathcal{P}'\subseteq \mathcal{P}$ be the family of path $P\not\subset C^0\cup C^b\cup C^c$ with $V(P)\cap V(C^0\cup C^b\cup C^c)=D_1(P)=\{b,c\}$.
\begin{claim}\label{endp}
Let $u$ and $v$ be the endpoints of some $P\in\mathcal{P}\backslash\mathcal{P}'$, then $|\{u,v\}\cap\{b,c\}|=|\{u,v\}\cap V(P^0)|=1$.
\end{claim}
If $\{u,v\}\cap\{b,c\}=\emptyset$, similarly as the proof of Claim~\ref{bcp} and Claim~\ref{bbp}, one can check that $|E(C^b\cup C^c\cup P-\{b,c\})|=|V(C^b\cup C^c\cup P-\{b,c\})|$, so we can find a cycle which contains neither $b$ nor $c$, a contradiction. Hence by the definition of $\mathcal{P}\backslash\mathcal{P}'$, $|\{u,v\}\cap\{b,c\}|=1$. 
Now without loss of generality, suppose $u=b$ and $v\neq c$.
If $v\in V(C^b)\backslash(\{b\}\cup V(P^0))$, without loss of generality, suppose $v=b_t^1$ for some $t\in[1,r_1]$. Then $P\cup bb_1^1\dots b_1^t$ is a cycle disjoint with $C^c$, a contradiction.
If $v\in V(C^c)\backslash(\{c\}\cup V(P^0))$,  without loss of generality, suppose $v=c_t^1$ for some $t\in[1,s_1]$. Then $C'=P\cup c_t^1\dots c_{s_1}^1 p_1\dots p_\ell b_{r_2}^2\dots b_1^2b$ is a cycle in $\mathcal{C}_G(b,c)$ with $C'\cap C^c=c_t^1\dots c_{s_1}^1 p_1\cup P^0$. This is a contradiction by the maximality of $P_0$. Therefore, $v\in V(P_0)$, and the proof of Claim~\ref{endp} is finished.

We pick $t\in [1,\ell]$ by the following rules.
If $\mathcal{P}\backslash \mathcal{P}'=\emptyset$, we arbitrarily pick $t\in [1,\ell]$.
If $\mathcal{P}\backslash \mathcal{P}'\neq\emptyset$, we arbitrarily pick a path $P^1\in \mathcal{P}\backslash \mathcal{P}'$. Then by Claim~\ref{endp}, without loss of generality, suppose $b\in D_1(P^1)$ and let $t\in[1,\ell]$ be the number with $D_1(P^1)=\{b,p_t\}$.
We give the following key claim.
\begin{claim}\label{cc}
For any cycle $C\in\mathcal{C}_G(b,c)\cup \mathcal{C}_G(c,b)$, it holds $p_t\in V(C)$.
\end{claim}
Suppose the contrary that $p_t\notin V(C)$ for some $C\in C^b\cap C^c$. Clearly, $C\not\subset C^b\cup C^c$. Let $T=C^b\cup C^c-\{b,c\}$.
By Claim~\ref{bbp}, $C\cap T$ is a path. If $p_t\notin V(C\cap T)$, then $C\cap T$ is a path within a disconnected graph $T-\{p_t\}$. One can see that, either $V(C\cap T)\subseteq\{b_1^1,\dots,b_{r_1}^1,c_1^1,\dots,c_{s_1}^1,p_1,\dots,p_{t-1}\}$, or $V(C\cap T)\subseteq\{b_1^2, \dots, b_{r_2}^2, c_1^2, \dots, c_{s_2}^2, p_{t+1}, \dots, p_{\ell}\}$. Without loss of generality, suppose 
\begin{equation}\label{ee1}
V(C\cap T)\subseteq\{b_1^2,\dots,b_{r_2}^2,c_1^2,\dots,c_{s_2}^2,p_{t+1},\dots,p_{\ell}\}\mbox{.}
\end{equation}
If $\mathcal{P}\backslash \mathcal{P}'=\emptyset$, without loss of generality, suppose $C\in\mathcal{C}_G(b,c)$. If $C\cap (C^0\cup C^b\cup C^c)=C\cap (T\cup bb_1^2)$ is path or a union of a path and a vertex. Since $C$ is a cycle, it follows that $C-(D_2(C\cap (C^0\cup C^b\cup C^c))$ is a union of at most two paths. Pick one of it as $P^2$, then $P^2\in\mathcal{P}=\mathcal{P}'$, which is a contradiction since $c\notin P^2\subset C$.
If $P^1\in \mathcal{P}\backslash \mathcal{P}'$ exists,
then $C'=P^1\cup bb_1^1\dots b_{r_1}^1p_1\dots p_{t-1}\in\mathcal{C}_G(b,c)$ is a cycle, so $C\cap C'\neq\emptyset$. Thus, by (\ref{ee1}), $C\cap P^1\neq\emptyset$. Consider $C''=P^1\cup b_1^2\dots b_{r_2}^2p_\ell\dots p_t\in\mathcal{C}_G(b,c)$. Since $C\cap T\neq\emptyset$ is a path, one can see that $C\cap (C''\cup C^c-\{b,c\})$ is disconnected, a contradiction by Claim~\ref{bbp}. 
The claim is done.

Since $ap_t\notin E(G)$, by Fact~\ref{f1}, there is a path $P$ from $a$ to $P_t$ and a cycle $C$ in $G$ with $V(P)\cap V(C)=\emptyset$. Recall that $N_{G}(a)=\{b,c\}$, so $P$ contains at least one of $b$ and $c$. Since every cycle in $G$ also contains at least one of $b$ and $c$, this implies $C\in \mathcal{C}_G(b,c)\cup \mathcal{C}_G(c,b)$ since $V(P)\cap V(C)=\emptyset$. However, by Claim~\ref{cc}, this gives $p_t\in V(C)$. Thus, $p_t\in V(P)\cap V(C)\neq\emptyset$, a contradiction.
\end{proof}
Before the proof of Theorem~\ref{es}, we mention the following result by Dirac~\cite{D60}.
\begin{thm}[Dirac~\cite{D60}]\label{k4}
Any graph $G$ with $\delta(G)\ge 3$ contains a subdivision of $K_4$.
\end{thm}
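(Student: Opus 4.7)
The plan is to reduce to the $2$-connected case and then extract a $K_4$-subdivision from the ear/chord structure forced by $\delta(G)\ge 3$. First I would observe that, since $\delta(G)\ge 3$, $G$ contains a $2$-connected subgraph $H$ with $\delta(H)\ge 3$: any block of $G$ containing an edge incident to a vertex of $G$-degree at least $3$ is either already such an $H$ or contains further blocks, and iterating within blocks eventually stabilizes on a $2$-connected piece in which every vertex has three incident edges. So I may assume $G$ itself is $2$-connected.

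Next, let $P=v_0v_1\cdots v_k$ be a longest path in $G$. By maximality of $P$, every neighbor of $v_0$ lies on $P$, and since $d_G(v_0)\ge 3$ we obtain indices $1=i_1<i_2<i_3$ with $v_{i_j}\in N_G(v_0)$. The cycle $C=v_0v_1\cdots v_{i_3}v_0$ then has the chord $v_0v_{i_2}$, giving a theta-graph $\Theta\subseteq G$ with branch vertices $v_0$ and $v_{i_2}$ and three internally disjoint $v_0$–$v_{i_2}$ paths $Q_1,Q_2,Q_3$. A $K_4$-subdivision is precisely a theta-graph together with one additional path that joins interior points of two distinct $Q_i$'s, so the task reduces to producing such an additional path.

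To produce it, I would exploit that almost all vertices of $\Theta$ have $\Theta$-degree $2$ but $G$-degree at least $3$. Pick any internal vertex $w$ of some $Q_i$; its third neighbor is either another vertex of $\Theta$ (giving a chord) or a vertex outside $\Theta$, in which case $2$-connectivity of $G$ supplies a path from that vertex back to $\Theta\setminus\{w\}$, forming an ear $R$ with one endpoint at $w$ and the other endpoint somewhere on $\Theta$. If the other endpoint of $R$ lies on a different $Q_j$ with $j\ne i$, then $\Theta\cup R$ is exactly a subdivision of $K_4$ with branch vertices $v_0$, $v_{i_2}$, $w$, and the other endpoint of $R$, and we are done.

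The main obstacle is the case in which every such $R$ has both endpoints on the same arc $Q_i$. In that case $R$ together with a sub-arc of $Q_i$ forms a cycle $C'$ that, combined with the remaining $v_0$–$v_{i_2}$ paths in $\Theta$, gives a new theta-subgraph $\Theta'$ strictly larger or with the branch vertices moved; I would then reapply the same step to an internal vertex of $\Theta'$. Because $G$ is finite and each iteration either terminates with a $K_4$-subdivision or strictly enlarges the subgraph under consideration, the process must halt with a genuine cross-arc attachment, completing the proof.
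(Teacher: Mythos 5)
This theorem is quoted in the paper from Dirac's 1960 article and is not proved there, so your attempt has to stand on its own; as written it has two genuine gaps. The first is the opening reduction: it is \emph{not} true that every graph with $\delta(G)\ge 3$ contains a $2$-connected subgraph $H$ with $\delta(H)\ge 3$. Let $B$ be $K_4$ with one edge subdivided, so $B$ is $2$-connected with exactly one vertex $u$ of degree $2$, and let $G$ be two copies of $B$ glued by identifying their degree-$2$ vertices. Then $\delta(G)=3$, but every $2$-connected subgraph of $G$ lies inside one block $B$, and $B$ (five vertices, seven edges, containing no $K_4$) has no subgraph with all degrees at least $3$. The correct reduction is to an end-block, in which every vertex \emph{except possibly the cut vertex} has degree at least $3$ inside the block; your argument would then have to carry that exceptional vertex along, and in particular the later appeal to "$2$-connectivity of $G$" to route an ear back to $\Theta$ is not available for free.

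The more serious gap is the last paragraph, which is exactly where the difficulty of the theorem lives. The easy half is fine: a longest path yields a cycle through $v_0$ containing all neighbours of $v_0$, hence a theta graph, and a theta graph plus a path joining interior points of two \emph{distinct} arcs is indeed a subdivision of $K_4$ (note, though, that an ear ending at a branch vertex is also a bad case: the union is then a subdivision of a three-vertex multigraph with doubled edges and contains no subdivision of $K_4$). But when the ear $R$ returns to the same arc $Q_i$, the graph $\Theta\cup R$ is a subdivision of a four-vertex multigraph with two parallel pairs of edges, which one can check contains no subdivision of $K_4$; it is not a theta graph, so "reapply the same step" is undefined until you extract a new theta from it, and the natural extraction (branch vertices the two attachment points of $R$, with $R$, the subpath of $Q_i$ between them, and the long way around as the three arcs) discards one of the original arcs and need not be larger than $\Theta$. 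You explicitly allow the outcome "branch vertices moved" without growth, which destroys the termination argument, and you give no reason why a cross-arc attachment must ever appear. Some genuine extremal or inductive device is needed here — Dirac's own induction, or an argument exploiting the particular theta produced by the longest path — and that idea is missing, so the proof does not go through as proposed.
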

Now we give a stronger version of Theorem~\ref{es} to finish the proof.
\begin{thm}~\label{stronger}
 Let $G$ be a $2\mathcal{C}_{\ge3}$-saturated graph on $n\ge 7$ vertices with $m\ge 2n-1$ edges. Then $M(G)\cong S_{n_0,3}$ with $n_0=|V(M(G))|\in[7,n]$. In particular, $m-n$ is even.
\end{thm}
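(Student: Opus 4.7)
The strategy is to reduce the theorem to a statement about the reduced graph $M(G)$ and then show $M(G) \cong S_{n_0, 3}$. By Proposition~\ref{mg}(i), $|E(M(G))| = n_0 + (m - n)$ where $n_0 = |V(M(G))|$, and since $m \ge 2n - 1$ and $n_0 \le n$, we have $|E(M(G))| \ge 2n_0 - 1$. The case $M(G) \cong K_5$ is ruled out because it would force $m - n = 5 < n - 1$, contradicting $m \ge 2n - 1$ for $n \ge 7$. Hence by Proposition~\ref{mg}(iv), $M(G)$ is itself a $2\mathcal{C}_{\ge 3}$-saturated graph, and by Proposition~\ref{mg}(ii) together with Lemma~\ref{pre}(i), $M(G)$ is good. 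Lemma~\ref{d3} then gives $\delta(M(G)) \ge 3$.

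Since $\delta(M(G)) \ge 3$, Theorem~\ref{k4} produces a subdivision $H \subseteq M(G)$ of $K_4$, with branch vertices $v_1, v_2, v_3, v_4$ joined by internally disjoint paths $P_{ij}$. Every cycle inside $H$ passes through at least three of the $v_i$'s, so $H$ alone contains no two disjoint cycles, consistent with $M(G)$ being $2\mathcal{C}_{\ge 3}$-saturated. The first key structural step is to promote $H$ to a genuine $K_4$ subgraph of $M(G)$: if some $P_{ij}$ had an internal vertex $w$, then $\delta(M(G)) \ge 3$ forces $w$ to have a third neighbor $u$, and a case analysis on where $u$ lies (inside another $P_{k\ell}$, at a branch vertex, or outside $V(H)$) produces two vertex-disjoint cycles in $M(G)$, typically by combining a triangle supported on three of the $v_i$ with a short cycle through $w$ and $u$. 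This contradicts $M(G)$ having no two disjoint cycles, so we may assume $\{v_1, v_2, v_3, v_4\}$ induces $K_4$.

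The second key structural step is to identify a triple $X \subseteq \{v_1, v_2, v_3, v_4\}$ with $|X| = 3$ such that every other vertex of $M(G)$ is adjacent exactly to $X$. For any $u \in V(M(G)) \setminus \{v_1, v_2, v_3, v_4\}$, a neighbor of $u$ outside a fixed such triple $X$ would pair with a triangle on the three complementary branch vertices to give two vertex-disjoint cycles, forcing $N_{M(G)}(u) \subseteq X$. Moreover, if some $u x_i$ with $x_i \in X$ were a non-edge, then $M(G) + u x_i$ must still contain two disjoint cycles, and a careful look at the available configurations shows those cycles already exist in $M(G)$, contradicting no-two-disjoint-cycles. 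Combined with $\delta(M(G)) \ge 3$, this pins every non-$X$ vertex to be adjacent exactly to $X$, while $X$ itself induces a triangle. Therefore $M(G) \cong S_{n_0, 3}$, so $|E(M(G))| = 3n_0 - 6$ and $m - n = 2n_0 - 6$ is even.

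The main obstacle is the extensive but routine case analysis at both key steps: constructing explicit pairs of vertex-disjoint cycles in $M(G)$ out of hypothetical ``extra'' edges forced by the minimum-degree condition, while exploiting the tight interplay between the no-two-disjoint-cycles constraint, $\delta(M(G)) \ge 3$, and the saturation hypothesis. The technical work is to enumerate the possible locations of such extra edges and in each case assemble a disjoint pair of cycles directly; the rigidity of $S_{n_0, 3}$ as the unique $\delta \ge 3$ saturated structure with $|E| \ge 2n_0 - 1$ makes the conclusion inevitable once this enumeration is carried out.
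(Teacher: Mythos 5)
Your opening reduction (ruling out $K_5$, passing to $M(G)$, getting $\delta(M(G))\ge 3$ via Lemma~\ref{d3}, and invoking Theorem~\ref{k4}) matches the paper. But your first ``key structural step'' --- promoting the subdivision $H$ of $K_4$ to a genuine $K_4$ by arguing that any internal vertex $w$ of a path $P_{ij}$, via its third neighbour, yields two disjoint cycles --- is false, and this is a genuine gap rather than an omitted routine verification. Two concrete obstructions: (a) in $S_{7,3}$ itself (the target structure, which has no two disjoint cycles and $\delta\ge 3$) one can choose a $K_4$-subdivision with an internal vertex, e.g.\ branch vertices $x_1,x_2,x_3,y_1$ with the path $x_1y_2x_2$ replacing the edge $x_1x_2$; the third neighbour of $y_2$ is the branch vertex $x_3$, and no pair of disjoint cycles appears. (b) More fatally, your argument for this step uses only $\delta\ge 3$ and the absence of two disjoint cycles, never the hypothesis $m_0\ge 2n_0-1$; but the wheel $W_{n_0}$ is a $2\mathcal{C}_{\ge3}$-saturated graph with $\delta\ge 3$ and no two disjoint cycles that contains a $K_4$-subdivision and contains \emph{no} genuine $K_4$ (a $K_4$ would force a triangle on the rim). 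So no argument of the form you sketch can succeed; the edge count must be used to exclude the wheel-like configurations, and your outline defers it to the very end as a vague appeal to ``rigidity.''

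This also undermines your second step: you look for the dominating triple $X$ \emph{inside} the branch set $\{v_1,v_2,v_3,v_4\}$, but in the paper's Case~3 the final isomorphism $M(G)\cong S_{n_0,3}$ has $X=\{u_1,u_3,v_2\}$ where $v_2$ is an internal vertex of a subdivision path, so the triple need not lie in the branch set at all. The paper's route keeps $H$ as a \emph{minimum-order} subdivision (possibly proper), first pins down how everything outside $V(H)$ attaches (Claims~\ref{k4p} and~\ref{k43}), and then splits into four cases according to the chords of $H$ inside $V(H)$; the wheel-type configuration survives into Case~4 and is only eliminated there using the outside vertices and, implicitly, the edge bound. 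To repair your proof you would need either to reproduce an analysis of that shape, or to import Lov\'asz's classification of graphs with no two vertex-disjoint cycles and then use $m_0\ge 2n_0-1$ together with saturation to isolate $S_{n_0,3}$ from the wheels and from $K_{3,n_0-3}$ with fewer than three edges in the small side.
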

\begin{proof}
Let $m_0=|E(M(G))|$. By Proposition~\ref{mg} (i) and Observation~\ref{block} (i), if $M(G)\cong S_{n_0,3}$, then $m-n=m_0-n_0=(3n_0-6)-n_0=2(n_0-3)$, which is even. So it remains to prove that $M(G)\cong S_{n_0,3}$ with $n_0\ge 7$.

Let $G$ be a conterexample that $G$ is a $2\mathcal{C}_{\ge3}$-saturated graph on $n\ge 7$ vertices with $m\ge 2n-1$ and $M(G)\not\cong S_{n_0,3}$.
If $n_0\le 6$, by Proposition~\ref{mg} (iv), Lemma~\ref{n6} and Lemma~\ref{wn}, $M(G)\cong K_5$ or $G\cong W_6$. Since $|V(G)|\ge 7$, we must have $M(G)\cong K_5$. By Proposition~\ref{mg} (i), $m=n+10-5=n+5<2n-1$, a contradiction. Hence $n_0\ge 7$ and then by Proposition~\ref{mg} (iv), $M(G)$ is $2\mathcal{C}_{\ge3}$-saturated.
Note that $m_0=n_0+m-n\ge n_0+2n-1-n=n_0+n-1\ge 2n_0-1$.
By Lemma~\ref{d3} and Theorem~\ref{k4}, we can pick a subgraph $H\subset M(G)$ with $H\in\mathcal{S}(K_4)$. In other words, there exist $4$ vertices $u_i$'s ($i\in[1,4]$) and $6$ internally disjoint paths $P_{j,k}$'s ($1\le j<k\le 4$) with $D_1(P_{j,k})=\{u_j,u_k\}$ for any $1\le j<k\le 4$. Also, $H=\bigcup_{1\le j<k\le 4}P_{j,k}$. We pick such $H$ of the minimum order.
\begin{claim}\label{k4p}
Let $P$ be a path in $M(G)$ with $V(P)\cap V(H)=D_1(P)\neq\emptyset$, then one of the following holds:\\
(a) $D_1(P)\subset\{u_i:i\in[1,4]\}$; \\
(b) $D_1(P)=\{u_i,u'\}$ for some $i\in[1,4]$ and $u'\in D_2(P_{j,k})$ with $j,k\neq i$;\\
(c) $D_1(P)=\{u_1',u_2'\}$ for some $u_i'\in D_2(P_{j_i,k_i})$ ($i=1,2$) with $\{j_1,k_i\}\cap\{j_2,k_2\}=\emptyset$.
\end{claim}
Note that for a path $P$ of size at least $1$, $D_2(P)$ is the set of its internal vertices and $D_1(P)$ is the set of its two endpoints. In particular, $V(P)=D_1(P)\cup D_2(P)$.
If $|D_1(P)\cap\{u_i:i\in[1,4]\}|=2$, then the claim is done by (a).
If $|D_1(P)\cap\{u_i:i\in[1,4]\}|=1$, without loss of generality, suppose $D_1(P)=\{u_1,u'\}$ with $u'\in  D_2(P_{j,k})$ for some $1\le j<k\le4$. If $j=1$, without loss of generality, suppose $k=2$. Then $P\cup P_{P_{1,2}}(u',u_1)$ and $P_{2,3}\cup P_{3,4}\cup P_{2,4}$ are two disjoint cycles in $M(G)$, a contradiction. So $j\neq 1$, which means $j,k\neq 1$. By (b), the proof is done.
Thus, $|D_1(P)\cap\{u_i:i\in[1,4]\}|=0$, suppose $D_1(P)=\{u_1',u_2'\}$ for some $u_i'\in D_2(P_{j_i,k_i})$ ($i=1,2$).
If $|\{j_1,k_i\}\cap\{j_2,k_2\}|=2$, without loss of generality, suppose $j_1=j_2=1$ and $k_1=k_2=2$. Then $P\cup P_{P_{1,2}}(u_1',u_2')$ and $P_{2,3}\cup P_{3,4}\cup P_{2,4}$ are two disjoint cycles in $M(G)$, a contradiction.
If $|\{j_1,k_i\}\cap\{j_2,k_2\}|=1$, without loss of generality, suppose $j_1=j_2=1$, $k_1=2$ and $k_2=3$. Then $P\cup P_{P_{1,2}}(u_1',u_1)\cup P_{P_{1,2}}(u,u_2')$ and $P_{2,3}\cup P_{3,4}\cup P_{2,4}$ are two disjoint cycles in $M(G)$, a contradiction.
Thereofore, $\{j_1,k_i\}\cap\{j_2,k_2\}=\emptyset$, which completes the proof of this claim by (c).
\begin{claim}\label{k43}
If $V(M(G))\neq V(H)$, then $M(G)-V(H)$ is an empty graph. Moreover, for any $u\in V(M(G))\backslash V(H)$, $d_{M(G)}(u)=3$ and $N_{M(G)(u)}=\{u_{j_1},u_{j_2},x\}$, where $x\in V(P_{j_3,j_4})$ and $\{j_i:i\in[1,4]\}=[1,4]$.
\end{claim}
By Fact~\ref{f2}, $M(G)-V(H)$ is a forest. For any component $T$ of $M(G)-V(H)$, let $N_T=\bigcup_{v\in V(M(G))\backslash V(H)}N_{M(G)}(v)$. Note that since $T$ is connected, for any pair $\{a_1,a_2\}\in N_T$, there exists a path $P$ in $M(G)$ with $V(P)\cap V(H)=D_1(P)=\{a_1,a_2\}$. 
If $|N_T\backslash\{u_i:i\in[1,4]\}|\ge 2$, by Claim~\ref{k4p}, we can suppose $u_1',u_2'\in N_T$ with $u_1'\in D_2(P_{1,2})$ and $u_2'\in D_2(P_{3,4})$. However, this leaves no choice for the vertices in $N_T\backslash\{u_1',u_2'\}\neq \emptyset$ in $V(H)$ by Claim~\ref{k4p}, a contradiction.
Hence, $|N_T\backslash\{u_i:i\in[1,4]\}|\le 1$. If $|N_T|=3$ and $|N_T\backslash\{u_i:i\in[1,4]\}|=1$, suppose $N_T=\{u',u_j,u_k\}$, where $\{j,k\}\subset[1,4]$ and $u'\in D_2(P_{j',k'})$ with $\{j',k'\}\subset[1,4]$. One can check by Claim~\ref{k4p} that $\{j',k'\}\cap\{j,k\}=\emptyset$. If $|N_T|=3$ and $|N_T\backslash\{u_i:i\in[1,4]\}|=0$, clearly, $N_T=\{u_i,u_j,u_k\}$ with $\{i,j,k\}\in [1,4]$. Thus, we can conclude that:
\begin{equation}\label{nt3}
\mbox{If }|N_T|= 3\mbox{, then }N_T=\{u_{j_1},u_{j_2},x\}\mbox{, where }x\in V(P_{j_3,j_4})\mbox{ and }\{j_i:i\in[1,4]\}=[1,4]\mbox{.}
\end{equation}
If $|N_T|\ge 4$, then $|N_T\cap\{u_i:i\in[1,4]\}|\ge 4-1=3$. Without loss of generality, suppose $u_1,u_2,u_3\in N_T$. Let $u'\in N_T\backslash\{u_1,u_2,u_3\}\neq\emptyset$, then one can check by Claim~\ref{k4p} that $u'=u_4$. Thus, we must have $|N_T|=4$ and $N_T=\{u_i:i\in[1,4]\}$.
Therefore, we have the following result:
\begin{equation}\label{nt4}
\mbox{If }|N_T|\ge 4\mbox{, then }|N_T|=4\mbox{ and }N_T=\{u_i:i\in[1,4]\}\mbox{.}
\end{equation}
Now suppose $|N_T|\ge 4$
If $T\not\cong K_1$, then since $T$ is a tree, $|D_1(T)|\ge 2$.
Pick $t_1,t_2\in D_1(T)$ and let $N_i=N_{M(G)}(t_i)\cap V(H)$ for $i=1,2$. Clearly, by Lemma~\ref{d3}, $|N_i|\ge 2$ for $i=1,2$. If there exists some $v\in N_1\cap N_2$, without loss of generality, suppose $v=u_1$ if $v\in\{u_i:i\in[1,4]\}$ and suppose $v\in D_2(P_{1,2})$ otherwise.
Then $C=t_1vt_2\cup P_{T}(t_1,t_2)$ is a cycle in $M(G)$ disjoint with the cycle $P_{2,3}\cup P_{3,4}\cup P_{2,4}$, a contradiction. So $ N_1\cap N_2=\emptyset$. Hence, $|N_1\cup N_2|\ge 4$. Clearly, $N_1\cup N_2\subseteq N_T$, so $|N_T|\ge 4$. By (\ref{nt4}), this means $|N_T|=4$ and $N_T=\{u_i:i\in[1,4]\}$. Since the equality holds, we get $|N_1|=|N_2|=2$. Without loss of generality, suppose $N_1=\{u_1,u_2\}$ and $N_2=\{u_3,u_4\}$. Then $t_1u_1u_2t_1$ and $t_2u_3u_4t_2$ are two disjoint cycles in $M(G)$, a contradiction. 
Therefore, $T\cong K_1$ for any component in $M(G)-V(H)$, i.e. $M(G)-V(H)$ is an empty graph. let $V(T)=\{t\}$. Suppose $d_{M(G)}(t)\ge 4$. Since $N_T=N_{M(G)}(t)$, we get $|N_T|\ge 4$, which implies $|N_T|=4$ and $N_T=\{u_i:i\in[1,4]\}$.
Suppose there exists some $P_{j,k}$ for $1\le j<k\le 4$ with $E(P_{j,k})\ge 2$. Without loss of generality, suppose $E(P_{1,2})\ge 2$. Then one can see that $H-(\{u_1\}\cup\bigcup_{k=2}^4D_2(P_{1,k}))+\{tu_2,tu_3,tu_4\}$ is a subdivision of $K_4$ with a smaller order than $H$, a contradiction by the minimality of $H$. Therefore, $|E(P_{j,k})|=1$ for $1\le j<k\le 4$, so $H\cong K_4$ and $M(G)[\{t\}\cup V(H)]\cong K_5$. Since $n_0\ge 7>5$, we can pick a vertex $v\in V(M(G))\backslash (V(H)\cup\{t\})$. Similarly, $v$ itself is a component of $M(G)-V(H)$, so $v$ has at least three neighbors in $V(H)$. Without loss of generality, suppose $u_1,u_2\in N_{M(G)}(v)$. Then $vu_1u_2v$ and $tu_3u_4t$ are two disjoint cycles in $M(G)$, a contradiction.
Therefore, by Lemma~\ref{d3}, $d_{M(G)}(t)=3$ for any $t\in M(G)-V(H)$. Then by (\ref{nt3}), the claim follows.

Let $A=E(M(G)[V(H)])\backslash E(H)$ be a set of edges.\\
\textbf{Case 1.} $A=\emptyset$.\\
In this case $H=M(G)[V(H)]$. Note that $H\in\mathcal{S}(K_4)$. 
If $H\cong K_4$, then since $n_0-4\ge 3$, we can pick a vertex $v_0\in V(M(G))\backslash V(H)$. By Claim~\ref{k43}, without loss of generality, suppose $N_{M(G)}(v_0)=\{u_1,u_2,u_3\}$. Then for any $v\in V(M(G))\backslash (V(H)\cup\{v_0\})$, if $N_{M(G)}(v)\neq\{u_1,u_2,u_3\}$, without loss of generality, suppose $N_{M(G)}(v)\neq\{u_1,u_2,u_4\}$. Then $v_0u_1u_3v_0$ and $vu_2u_4v$ are two disjoint cycles, a contradiction. Hence for any $v\in V(M(G))\backslash V(H)$, it holds $N_{M(G)}(v)=\{u_1,u_2,u_3\}$. Thus, $M(G)=K[\{u_1,u_2,u_3\},V(M(G))\backslash\{u_1,u_2,u_3\}]\cup u_1u_2u_3u_1\cong S_{n_0,3}$ and we are done.\\
If $H\not\cong K_4$, then $V(H)\backslash\{u_i:i\in[1,4]\}\neq\emptyset$. Without loss of generality, supoose there is a vertex $u_0\in D_2(P_{1,2})$. Since $d_{M(G)}(u_0)\ge 3$ by Lemma~\ref{d3} and $d_{H}(u_0)=2$, there exists a vertex $v_0\in V(M(G))\backslash V(H)$ with $u_0\in N_{M(G)}(v_0)$. Then by Claim~\ref{k43}, $N_{M(G)}(v_0)=\{u_0,u_3,u_4\}$.
For any vertex $v\in V(M(G))\backslash (V(H)\cup\{v_0\}$, by Claim~\ref{k43}, $|N_{M(G)}(v)\cap\{u_i:i\in[1,4]\}|\ge 2$. Suppose $\{u_{j_1},u_{j_2}\}\subset N_{M(G)}(v)$ for some $\{j_1,j_2\}\subset [1,4]$. If $\{j_1,j_2\}=\{1,2\}$, then $vu_1u_2v$ and $v_0u_3u_4v_0$ are two disjoint cycles, a contradiction. If $|\{j_1,j_2\}\cap\{1,2\}|=1$, without loss of generality, suppose $\{j_1,j_2\}=\{1,3\}$. Then $vu_1u_3v$ and $u_0v_0u_4\cup P_{2,4}\cup P_{P_{1,2}}(u_2,u_0)$ are two disjoint cycles, a contradiction. Hence, $\{j_3,j_4\}=\{3,4\}$. Then by Claim~\ref{k43}, $N_{G}(v)=\{u_0',u_3,u_4\}$, where $u_0'\in V(P_{1,2})$. If $u_0'\in V(P_{P_{1,2}}(u_1,u_0))\backslash \{u_0\}$, then $u_0'vu_3\cup P_{1,3}\cup P_{P_{1,2}}(u_1,u_0')$ and $u_0v_0u_4\cup P_{2,4}\cup P_{P_{1,2}}(u_2,u_0)$ are two disjoint cycles, a contradiction. So $u_0'\notin V(P_{P_{1,2}}(u_1,u_0))\backslash \{u_0\}$. Similarly, $u_0'\notin V(P_{P_{1,2}}(u_2,u_0))\backslash \{u_0\}$. Thus, $u_0'=u_0$. To conclude, for any $v\in V(M(G))\backslash V(H)$, $N_{M(G)}(v)=\{u_0,u_3,u_4\}$. Now, we can see that $M(G)=K[\{u_0,u_3,u_4\},V(M(G))\backslash\{u_0,u_3,u_4\}]+u_3u_4$. Hence $M(G)+u_0u_3+u_0u_4\cong S_{n_0,3}$. By Observation~\ref{block} (i), this is a contradiction.
\\
\textbf{Case 2.} There is an edge $e\in A$ with $e\subset\{u_i:i\in[1,4]\}$.\\
Let $e=u_iu_j$ for some $1\le i<j\le 4$. Then $P_{i,j}$ is a path of size at least $2$. Then $(E(H)\backslash E(P_{i,j}))\cup\{e\}$ induce a subdivision of $K_4$ with a smaller order than $H$, a contradiction by the minimality of $H$.\\ 
\textbf{Case 3.} There is an edge $e\in A$ with $e\cap\{u_i:i\in[1,4]\}=\emptyset$.\\
By Claim~\ref{k4p}, we can suppose $e=v_1v_2$ with $v_1\in D_2(P_{1,3})$ and $v_2\in D_2(P_{2,4})$ without loss of generality. 
One can check that, $H+e-D_2(P)$ is a subdivision of $K_4$ if $|E(P)|\ge 2$ and
$$P\in\mathcal{P}_1=\{P_{1,2},P_{1,4},P_{2,3},P_{3,4},P_{P_{1,3}}(v_1,u_1), P_{P_{1,3}}(v_1,u_3),P_{P_{2,4}}(v_2,u_2),P_{P_{2,4}}(v_2,u_4)\}\mbox{.}$$
Hence, any path $P\in\mathcal{P}_1$ contains only one edge, or $H+e-D_2(P)$ is a smaller subdivision of $K_4$ than $H$, which is a contradiction by the minimality of $H$. Therefore, $V(H)=\{u_i:i\in[1,4]\}\cup\{v_1,v_2\}$ and one can see that $H+e=K[\{u_1,u_3,v_2\},\{u_2,u_4,v_1\}]\cong K_{3,3}$.
Since $|V(H)|=6<|M(G)|$, it holds $V(M(G))\backslash V(H)\neq\emptyset$. Pick a vertex $v_0\in V(M(G))\backslash V(H)$. By Claim~\ref{k43}, $|N_{M(G)}(v_0)\cup V(H)|=|N_{M(G)}(v_0)|=3$. If some edge $e'\in E(H+e)$ has $e'\subset N_{M(G)}(v_0)$, then $e'\cup v_0$ induce a copy of $C_3$ and $V(H)\backslash e'$ induce a copy of $C_4$, which is a contradiction since $M(G)$ is $2\mathcal{C}_{\ge 3}$-saturated. Therefore, either $N_{M(G)}(v_0)=\{u_1,u_3,v_2\}$ or $N_{M(G)}(v_0)=\{u_2,u_4,v_1\}$. Without loss of generality, suppose $N_{M(G)}(v_0)=\{u_1,u_3,v_2\}$. For any vertex $v\in V(M(G))\backslash (V(H)\cup\{v_0\})$, if $N_{M(G)}(v)=\{u_2,u_4,v_1\}$, then $v_0u_1u_2u_3v_0$ and $vv_1v_2u_4v$ are disjoint copies of $C_4$, which is a contradiction. Thus, by the previous proof, for any $v\in V(M(G))\backslash V(H)$, it always holds $N_{M(G)}(v)=\{u_1,u_3,v_2\}$. Hence $K[\{u_1,u_3,v_2\},V(M(G))\backslash\{u_1,u_3,v_2\}]\subset M(G)$. Now since $M(G)$ is $2\mathcal{C}_{\ge 3}$-saturated and $|V(M(G))|\ge 7$, one can check that any pair in $V(M(G))\backslash\{u_1,u_3,v_2\}$ is not an edge and then any pair in $\{u_1,u_3,v_2\}$ is an edge. Therefore, $M(G)\cong S_{n_0,3}$.\\
\textbf{Case 4.} Any edge $e\in A$ has $|e\cap\{u_i:i\in[1,4]\}|=1$.\\
By Claim~\ref{k4p}, we can suppose $e=u_1v_1$ with $v_1\in D_2(P_{2,3})$ without loss of generality.
One can check that, $H+e-D_2(P)$ is a subdivision of $K_4$ if $|E(P)|\ge 2$ and
$$P\in\mathcal{P}_2=\{P_{1,i}: i\in[2,4]\}\mbox{.}$$
Similarly as Case 3, any path $P\in\mathcal{P}_2$ contains only one edge.
\begin{claim}\label{u1}
For any edge $e'\in A$, it holds $u_1\in e'$.
\end{claim}
Suppose the contrary that there is another $e'\in A$ with $u_1\notin e'$. 
If $u_2\in e'$, by Claim~\ref{k4p} and $D_2(P_{1,3})=\emptyset$, it holds $e'=u_2v_2$ with $v_2\in D_2(P_{1,4})\cup D_2(P_{3,4})$. If $v_2\in D_2(P_{1,4})$, then $\{e'\}\cup P_{P_{1,4}}(v_2,u_4)\cup P_{2,4}$ and $\{e\}\cup P_{2,3}(v_1,u_3)\cup P_{1,3}$ are two disjoint cycles in $M(G)$, a contradiction. If $v_2\in D_2(P_{3,4})$, then $\{e'\}\cup P_{P_{3,4}}(v_2,u_4)\cup P_{2,4}$ and $\{e\}\cup P_{2,3}(v_1,u_3)\cup P_{1,3}$ are two disjoint cycles in $M(G)$, a contradiction, too. Thus, $u_2\not\in e'$. Similarly,  $u_3\not\in e'$, so $u_4\in e'$. Since $D_2(P_{1,2})=D_2(P_{1,3})=\emptyset$, it holds $e'=u_4v_2$ with $v_2\in D_2(P_{2,3})$.
Similarly we can prove that each of $P_{3,4}$ and $P_{2,4}$ contains only one edge. Now by symmetry, suppose $v_2\in V(P_{P_{2,3}}(v_1,u_2))$, then $H-(\{u_3\}\cup D_2(P_{P_{2,3}}(u_3,v_1)))$ is a subdivision of $K_4$, which is a contradiction by the minimality of $H$. The claim is done.

By Claim~\ref{u1}, we see $M(G)[V(H)]-\{u_1\}\cong C_{|V(H)|-1}$, so $M(G)[V(H)]$ is a subgraph of a copy of $W_{|V(H)|}$. Hence, by Observation~\ref{block} (ii), if $V(H)=V(M(G))$, then $m_0\le 2n_0-2$, a contradiction. Thus, $V(M(G))\backslash V(H)\neq\emptyset$.
\begin{claim}\label{wheel}
For any $v\in V(M(G))\backslash V(H)$ and $a_1,a_2\in N_{M(G)}(v)\backslash\{u_1\}$, there is no path $P$ in $M(G)[H]-\{u_1,a_1,a_2\}$ with $\emptyset\neq D_1(P)\subset N_{M(G)}(u_1)$.
\end{claim}
If such path $P$ exists, suppose the endpoints of $P$ are $p_1$ and $p_2$.
Note that $M(G)[V(H)]-\{u_1\}\cup V(P)$ is a path, which is connected. Hence, $a_1va_2\cup P_{M(G)[V(H)]-(\{u_1\}\cup V(P)}(a_1,a_2)$ is a cycle disjoint with the cycle $P\cup p_1u_1p_2$, a contradiction. The claim is done.

Now suppose $d_{M(G)[V(H)]}(u_1)=d$. Recall that $\{u_2,u_3,u_4,v_1\}\subseteq N_{M(G)[V(H)]}(u_1)$, so $d\ge 4$.
By Claim~\ref{k43}, for any $v\in V(M(G))\backslash V(H)$, we have $|N_{M(G)}(v)\backslash\{u_1\}|\ge 2$. So we can pick $a_1,a_2\in N_{M(G)}(v)\backslash\{u_1\}$.
Note that $M(G)[H]-\{u_1,a_1,a_2\}$ is a union of at most two disjoint paths and there are at least $d-2$ neighbors of $u_1$ in it. By piegeonhole principle, if $d\ge 5$, then there must exist at least $\lceil\frac{d-2}{2}\rceil=2$ of them in one component of $M(G)[H]-\{u_1,a_1,a_2\}$, which means they are the endpoints of some path in $M(G)[H]-\{u_1,a_1,a_2\}$. By Claim~\ref{wheel}, this is a contradiction.
Thus $d=4$, so $N_{M(G)[V(H)]}(u_1)=\{u_2,u_3,u_4,v_1\}$. By the previous proof, the only case can happen now is $N_{M(G)}(v)\backslash\{u_1\}=\{a_1,a_2\}\subset N_{M(G)}(u_1)$ and the two neighbors of $u_1$ within $V(H)$ other than $a_1$ and $a_2$ are in different components of $M(G)[H]-\{u_1,a_1,a_2\}$. Now it is easy to see that $N_{M(G)}(v)=\{u_1,u_2,u_3\}$ or $N_{M(G)}(v)=\{u_1,v_1,u_4\}$.
Pick a vertex $v'\in V(M(G))\backslash V(H)$, without loss of generality, suppose $N_{M(G)}(v')=\{u_1,u_2,u_3\}$. If there exists some $v''\in V(M(G))\backslash (V(H)\cup\{v'\}$ with $N_{M(G)}(v'')=\{u_1,v_1,u_4\}$, then $v'u_1u_2v'$ and $v''v_1u_3u_4v''$ are two disjoint cycles in $M(G)$, a contradiction. This means for any $v\in V(M(G))\backslash V(H)$, we always have $N_{M(G)}(v)=\{u_1,u_2,u_3\}$. Now one can check that $K[\{u_1,u_2,u_3\},V(M(G))\backslash\{u_1,u_2,u_3\}]+u_1u_2+u_1u_3\subset M(G)$. Similarly as the proof in Case 3, it follows that $M(G)\cong S_{n_0,3}$ and the proof is done.
\end{proof}

\section{Concluding remarks}
In this article, we completely determine the saturation number and the saturation spectrum of $k\mathcal{C}_{\ge 3}$ when $k=2$ and give some results when $k\ge 3$. 
Below we give some remarks and conjectures for $\sat(n,k\mathcal{C}_{\ge3})$ and $\ES(n,k\mathcal{C}_{\ge 3})$ when $k\ge 3$.

Firstly, by Theorem~\ref{k} (i), $\sat(n, k\mathcal{C}_{\ge 3})\le n+6k-7$ for $n\ge 4k-3$ and $k\ge 3$. It is natural to make the following conjecture.
\begin{conj}\label{conj1}
For $n\ge 4k-3$ and $k\ge 3$, $\sat(n, k\mathcal{C}_{\ge 3})=n+6k-7$.
\end{conj}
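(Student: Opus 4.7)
The upper bound $\sat(n,k\mathcal{C}_{\ge 3})\le n+6k-7$ is Theorem~\ref{k}(i), so the plan is to establish the matching lower bound. Suppose for contradiction that $G$ is a $k\mathcal{C}_{\ge 3}$-saturated graph on $n\ge 4k-3$ vertices with $|E(G)|\le n+6k-8$. Passing to $M:=M(G)$, Proposition~\ref{mg}(i) reduces the problem to showing $|E(M)|-|V(M)|\ge 6k-7$. If $M\cong K_{3k-1}$, a direct calculation gives $\tfrac{(3k-1)(3k-4)}{2}-(6k-7)=\tfrac{9(k-1)(k-2)}{2}\ge 0$ for $k\ge 3$, settling this case. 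Otherwise Proposition~\ref{mg}(iv) makes $M$ itself $k\mathcal{C}_{\ge 3}$-saturated, and Proposition~\ref{mg}(iii) yields $\delta(M)\ge 2$.

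Next I would exploit the block decomposition of $M$. Let $B_1,\ldots,B_{b_{nt}}$ be the non-trivial (2-connected) blocks and $c_i$ the maximum number of vertex-disjoint cycles in $B_i$. A short count in the block-cut tree gives
$$|E(M)|-|V(M)| \;=\; \sum_{i=1}^{b_{nt}}\bigl(|E(B_i)|-|V(B_i)|\bigr) + (b_{nt}-1),$$
so trivial blocks are irrelevant and the problem becomes per-block. The central claim is the block surplus inequality: for every non-trivial block $B_i$ of $M$,
$$|E(B_i)|-|V(B_i)|\;\ge\; 5\,c_i,$$
with equality only when $B_i\cong K_5$. Granting this, together with the fact that the blocks in any Lemma~\ref{construct}-type configuration each admit a disjoint cycle avoiding all of their cut vertices, a charging argument applying saturation at non-edges inside each $B_i$ yields simultaneously $\sum_i c_i\ge k-1$ and $b_{nt}\ge k-1$. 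Substitution gives $|E(M)|-|V(M)|\ge 5(k-1)+(k-1)-1 = 6k-7$, with equality exactly for the configuration of $k-1$ copies of $K_5$ glued at a common vertex from Theorem~\ref{cal}(i).

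The main obstacle is the block surplus inequality itself, since unconstrained 2-connected graphs with $c$ vertex-disjoint cycles can have surplus well below $5c$ (for instance, two triangles joined by two crossing edges yield surplus $2$ with $c=2$), so the saturation hypothesis has to be used non-trivially. For the base case $c_i=1$ I would mirror Theorem~\ref{stronger}: invoke Dirac's Theorem~\ref{k4} to find a subdivision $H\in\mathcal{S}(K_4)$ of minimum order inside $B_i$, and then use saturation of $M$ to restrict how the paths of $H$, any extra chords, and any vertices outside $V(H)$ attach, forcing $B_i$ into one of the models $K_5$, $W_a^{+p}$, or $S_{b,3}$. For $c_i\ge 2$ I would induct on $c_i$: select a non-edge $e$ internal to $B_i$, locate $k$ disjoint cycles in $M+e$, carefully peel one cycle off to knock $c_i$ down by one while preserving 2-connectedness and saturation of the remainder, and reapply the inequality. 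The genuinely delicate step is coordinating cut vertices across different blocks so that disjoint cycles chosen in one block do not sabotage the cycles reserved in the others; this cut-vertex bookkeeping is where I expect the heaviest case analysis.
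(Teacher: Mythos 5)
This statement is posed in the paper as an open conjecture: the paper proves only the upper bound $\sat(n,k\mathcal{C}_{\ge 3})\le n+6k-7$ (Theorem~\ref{cal}(i)) and offers no proof of the matching lower bound, so there is nothing in the paper to compare your argument against. Your proposal is a reasonable program, but as written it contains a concrete error and leaves the essential content unproved.

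The concrete error is in the final arithmetic. From the block decomposition you correctly get $|E(M)|-|V(M)|=\sum_i\bigl(|E(B_i)|-|V(B_i)|\bigr)+(b_{nt}-1)$ and $\sum_i c_i\ge k-1$, but the additional claim $b_{nt}\ge k-1$ is false: $S_{n,2k-1}$ is $k\mathcal{C}_{\ge 3}$-saturated (Observation~\ref{block}(i)) and $2$-connected, hence has exactly one non-trivial block. Without that claim, your per-block bound $|E(B_i)|-|V(B_i)|\ge 5c_i$ only yields $|E(M)|-|V(M)|\ge 5(k-1)$, which falls short of $6k-7$ by $k-2$. The inequality you actually need is $|E(B_i)|-|V(B_i)|\ge 6c_i-1$ per non-trivial block (this is tight for $K_5$ and consistent with $W_a^{+p}$, $S_{b,2j-1}$ and $K_{3i-1}$, and it telescopes correctly with the $b_{nt}-1$ term). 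Beyond that, the block surplus inequality itself is the entire difficulty of the conjecture and is not established: your own example of two triangles joined by two edges shows it fails without saturation, the base case $c_i=1$ would require redoing the whole of Sections~3--4 of the paper for blocks of a $k\mathcal{C}_{\ge 3}$-saturated graph rather than for $2\mathcal{C}_{\ge 3}$-saturated graphs (these are different hypotheses --- a single block of a $k$-saturated graph need not be $2$-saturated), and the inductive step for $c_i\ge 2$ (``peel one cycle off \dots preserving saturation of the remainder'') has no justification, since deleting a cycle from a saturated graph does not in general leave a saturated graph. So the proposal identifies a plausible route but does not close the conjecture.
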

For $3\mathcal{C}_{\ge 3}$,
by Theorem~\ref{cal} (iii) and (v) that, $[n+11,3n-12]\cup\{n+1+4t: t\in[5,n-4]\}\subseteq\ES(n,3\mathcal{C}_{\ge 3})$ for $n\ge 14$. 
On may conjecture the equality holds.
However, this is not ture.
For example, when $n\ge 12$, by considering a graph with $s_{6,2}=s_{n-5,2}=1$ and another graph with $s_{6,2}=s_{n-6,2}=k_1=1$ in Claim~\ref{construct}, 
we can construct $n$-vertex $3\mathcal{C}_{\ge 3}$-saturated graphs with $3n-9$ and $3n-11$ edges respectively.
Moreover, for $n\ge t+4\ge 5$, let $W_{n,t}\cong K_{t}\vee C_{n-t}$.
For $n\ge 3t+5$ and $t\ge 3$, let $T_{n,t}=K_t\vee U(P^1,\dots, P^{t+2}, v_1,\dots, v_{t+2})$, where $P^i\cong P_3$ for $i\in[1,t+1]$, $P^{t+2}\cong P_{n-3t-2}$, and $v_j\in D_1(P^j)$ for any $j\in[1,t+2]$.
One can check the following observations.
\begin{obs}\label{other}
(i) For $k\ge 3$ and $n\ge 5k-4$, $W_{n,k-1}$ is an $n$-vertex $k\mathcal{C}_{\ge 3}$-saturated graphs with $\binom{k-1}{2}+k(n-k+1)$ edges.\\
(ii) For $k\ge 2$ and $n\ge 3k+2$, $T_{n,k-1}$ is an $n$-vertex $k\mathcal{C}_{\ge 3}$-saturated graphs with $\binom{k-1}{2}+k(n-k+1)-1$ edges.
\end{obs}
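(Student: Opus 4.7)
My plan is to establish the two parts of Observation~\ref{other} in parallel, since both $W_{n,k-1}$ and $T_{n,k-1}$ are built as joins $K_{k-1}\vee H$ for a suitable graph $H$. In both cases I would verify the $k\mathcal{C}_{\ge 3}$-free property with a pigeonhole argument on the $k-1$ hub vertices, and then supply $k$ disjoint cycles inside $G+uv$ for every non-edge $uv$ via an explicit packing.

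For (i) with $W_{n,k-1} = K_{k-1}\vee C_{n-k+1}$, the rim cycle $C_{n-k+1}$ is induced, so the only cycle avoiding the hub set $X = V(K_{k-1})$ is the whole rim itself, which uses every rim vertex. If $k$ vertex-disjoint cycles existed, pigeonhole would force either all of them to hit $X$ (impossible as $|X|=k-1$), or one of them to be the entire rim, leaving the remaining $k-1$ cycles inside the subgraph on the $k-1$ hubs (impossible since $k-1$ disjoint cycles need at least $3(k-1) > k-1$ vertices). For saturation, any non-edge $uv$ has both endpoints on the rim and splits $C_{n-k+1}$ into two arcs with $j$ and $n-k+1-j$ edges. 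I would take the shorter arc ($j \le \lfloor (n-k+1)/2 \rfloor$) together with $uv$ as a first cycle $C^1$; the hypothesis $n\ge 5k-4$ ensures $j \le n-3k+2$, so the complementary long arc contains at least $2k-2$ consecutive rim vertices. I would pair these into $k-1$ consecutive pairs and assign a distinct hub of $K_{k-1}$ to each pair, producing $k-1$ disjoint triangles which together with $C^1$ give $k$ disjoint cycles.

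For (ii) with $T_{n,k-1} = K_{k-1}\vee S$, where $S$ is the spider tree obtained by identifying one endpoint of $k$ copies of $P_3$ (the \emph{short legs}) and one copy of $P_{n-3k+1}$ (the \emph{long leg}) at a common center $c$, the free direction is immediate: $S$ is acyclic, so every cycle of $T_{n,k-1}$ meets a hub, and there are only $k-1$ hubs available. For saturation, every non-edge $uv$ lies in $S$, and I would take the unique cycle $C^*$ obtained by closing the $u$-$v$ path of $S$ with the edge $uv$ as a first cycle, and then look for $k-1$ further disjoint triangles of the form $\{h,x,y\}$ with $h$ a hub and $xy\in E(S-V(C^*))$. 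This reduces the problem to finding a matching of size $k-1$ in the forest $F = S-V(C^*)$.

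The hard part will be verifying this matching claim, which I would handle by a short case analysis on the positions of $u$ and $v$ in $S$. Since $C^*$ is a simple path in the tree $S$, it intersects at most two of the $k$ short legs, so at least $k-2$ short legs remain completely intact in $F$ and each contributes one matching edge (regardless of whether $c$ survives). The last matching edge would come from the long leg: if $C^*$ avoids the long leg, a full path on $n-3k$ non-center long-leg vertices remains, and the condition $n\ge 3k+2$ guarantees at least one edge there; otherwise a long-leg subpath of at least $n-3k-(|V(C^*)|-2)$ vertices survives, which I would verify is still nonempty on a case-by-case basis. The tightest situation is $u = s^i_2$, $v = s^j_2$ for distinct short legs $i,j$, giving $|V(C^*)| = 5$ that consumes both short legs together with $c$; here exactly $k-2$ short legs survive, and the long leg remains untouched so it supplies the final matching edge precisely when $n\ge 3k+2$.
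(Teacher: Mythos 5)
Your argument is correct. The paper offers no proof of this observation at all --- it appears in the concluding remarks prefaced only by ``One can check the following observations'' --- so there is no argument of record to compare against; your verification supplies exactly the routine checks the author omits, and the arithmetic at the boundary cases ($n=5k-4$ giving a rim of odd length $4k-3$ in part (i), and $n\ge 3k+2$ giving the long leg an edge in part (ii)) works out as you claim. The only place to tighten is the ``otherwise'' branch of part (ii): when $C^*$ meets the long leg it may consume that leg entirely, leaving no surviving long-leg edge, so one should not look there for the last matching edge; but in that situation the $u$--$v$ path in the spider passes through at most one short leg, so at least $k-1$ short legs remain intact and already supply the whole matching --- the long leg is genuinely needed only in the tight case you correctly isolate ($u=s_2^i$, $v=s_2^j$), where it is untouched.
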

By considering these mentioned constructions, we give the following conjecture.
\begin{conj}\label{conj2}
For $n\ge 14$, 
$$\ES(n,3\mathcal{C}_{\ge 3})=([n+11,3n-11]\cup\{3n-9,3n-6,3n-5\})\cup\{n+1+4t: t\in[5,n-4]\}\mbox{.}$$ 
\end{conj}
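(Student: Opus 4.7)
The plan is to prove the two inclusions $\supseteq$ and $\subseteq$ of the claimed equality separately. The former is a matter of assembling known constructions, while the latter requires a new structural classification.

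For $\supseteq$, the range $[n+11,3n-12]$ is covered by Theorem~\ref{cal}(iii); the arithmetic progression $\{n+1+4t:t\in[5,n-4]\}$ is covered by Theorem~\ref{cal}(v); and the values $3n-5$ and $3n-6$ are realized by $W_{n,2}$ and $T_{n,2}$ via Observation~\ref{other}. The two remaining specified values $3n-11$ and $3n-9$ are realized by applying Lemma~\ref{construct} with the parameter choices $(s_{6,2},s_{n-6,2},k_1)=(1,1,1)$ and $(s_{6,2},s_{n-5,2})=(1,1)$ respectively (all other block-count parameters equal to zero), and computing edge counts via $|E(S_{b,3})|=3b-6$.

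For $\subseteq$, I must exclude all sizes in the complementary set $\{3n-10,3n-8,3n-7\}\cup([3n-4,5n-16]\setminus\{n+1+4t:t\in[5,n-4]\})$. The overall strategy mirrors the proof of Theorem~\ref{stronger}: reduce to $M(G)$ using Proposition~\ref{mg}, dispose of $M(G)\cong K_8$ separately (it contributes only $m=n+20\in[n+11,3n-12]$ for $n\ge 16$), and otherwise classify good $3\mathcal{C}_{\ge 3}$-saturated graphs $H=M(G)$ of high edge density. The classification would proceed through (i) a minimum-degree lower bound for $H$ analogous to Lemma~\ref{d3}, pushing $\delta(H)\ge 3$ whenever $m$ is large; (ii) a block-decomposition lemma in the spirit of Lemma~\ref{pre}(i), showing that $H$ has at most two non-trivial blocks of restricted types; (iii) for $\delta(H)\ge 3$, use Dirac's Theorem~\ref{k4} to find a $K_4$-subdivision and replicate the four-case local analysis of Theorem~\ref{stronger} to pin down $H$ up to isomorphism with a short list of templates, namely $S_{n_0,5}$, $W_{n_0,2}$, $T_{n_0,2}$, and block amalgamations of $W_a^{+p}$ and $S_{b,3}$ in the sense of Lemma~\ref{construct}. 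Finally, tabulate the edge counts of every member of this list and verify none realizes an excluded value.

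The main obstacle will be step (iii). Unlike the $k=2$ situation, where $S_{n_0,3}$ is the \emph{unique} extremal template (Theorem~\ref{stronger}), the $k=3$ case admits several genuinely distinct extremal templates with overlapping edge-count ranges, and ruling out the hybrid block-configurations that could a priori produce excluded values such as $3n-10,3n-8,3n-7,3n-4,3n-3,3n-2,3n-1$ demands meticulous bookkeeping. A promising technical reduction is to prove, for any good $3\mathcal{C}_{\ge 3}$-saturated graph $H$ with $|E(H)|\ge 3|V(H)|-11$, that $H$ must admit a dominating set of size at most $2$; combined with minimum-degree and adjacency constraints in the spirit of Lemma~\ref{pre}(iii) and the argument behind Lemma~\ref{x}, this should pin down both the dominating set's neighborhood structure and the isomorphism type of $H$ completely, and hence the possible values of $|E(H)|$.
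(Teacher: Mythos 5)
The statement you are proving is stated in the paper only as a conjecture (Conjecture~\ref{conj2}); the paper offers no proof of it, and your proposal does not close that gap. Your $\supseteq$ direction is correct and is exactly what the paper's concluding remarks already establish: $[n+11,3n-12]\cup\{n+1+4t:t\in[5,n-4]\}$ from Theorem~\ref{cal}(iii),(v), the values $3n-9$ and $3n-11$ from the two block configurations you name, and $3n-5$, $3n-6$ from $W_{n,2}$ and $T_{n,2}$ via Observation~\ref{other}. But the entire content of the conjecture is the $\subseteq$ direction, and there your text is a research plan rather than an argument: steps (i)--(iii) are announced, not carried out, and you yourself flag step (iii) as the "main obstacle." A proof must actually produce the classification of dense good $3\mathcal{C}_{\ge 3}$-saturated graphs; asserting that one "should" be able to pin down a short list of templates is precisely the open problem.

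Beyond being incomplete, several of the proposed reductions do not transfer from $k=2$ as stated. Lemma~\ref{pre}(i) (one non-trivial block) genuinely fails for $k=3$: the constructions with $s_{6,2}=s_{n-5,2}=1$ have two non-trivial blocks, so your step (ii) needs a new proof, and the cut-vertex interactions between two dense blocks are exactly where hybrid edge counts such as $3n-10$ or $3n-8$ could a priori arise. Lemma~\ref{d3}'s argument that $\delta\ge 3$ for dense graphs hinges on every cycle meeting the triangle $abca$, which is special to $k=2$ and has no obvious analogue when two disjoint cycles are permitted. The "dominating set of size at most $2$" claim is unsupported. Finally, your disposal of $M(G)\cong K_{8}$ only works for $n\ge 16$: for $n\in\{14,15\}$ the value $m=n+20$ lies outside $[n+11,3n-12]$ and outside the conjectured set, so you must either rule out $14$- and $15$-vertex saturated graphs with $M(G)\cong K_8$ or conclude that the conjectured set itself needs adjustment for those $n$; as written, this case is simply left open.
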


More generally, we ask the following problem at last.
\begin{prob}\label{prob1}
Determine $\ES(n,k\mathcal{C}_{\ge 3})$ for $k\ge 3$ and $n\ge 3k$.
\end{prob}

\end{document}